\newtheorem{theorem}{Theorem}[section]
\newtheorem{teo}[theorem]{Theorem}
\newtheorem{lm}[theorem]{Lemma}
\newtheorem{pr}[theorem]{Proposition}
\newtheorem{ex}[theorem]{Example}
\DeclareMathOperator{\GL}{GL}
\DeclareMathOperator{\Dist}{Dist}
\DeclareMathOperator{\Hom}{Hom}
\DeclareMathOperator{\SAlg}{SAlg}
\DeclareMathOperator{\cont}{cont}
\DeclareMathOperator{\lead}{lead}
\DeclareMathOperator{\admissible}{admissible}
\DeclareMathOperator{\proj}{proj}
\begin{document}
\title[Combinatorial aspects of an odd linkage property for supergroups]{Combinatorial aspects of an odd linkage property for general linear supergroups}
\author{Franti\v sek Marko}
\address{The Pennsylvania State University, 76 University Drive, Hazleton, PA 18202, USA}
\email{fxm13@psu.edu}
\begin{abstract}
Let $G=\GL(m|n)$ be a general linear supergroup, and $G_{ev}$ be its even subsupergroup isomorphic to $\GL(m)\times \GL(n)$. 
In this paper, we use the explicit description of $G_{ev}$-primitive vectors in the costandard supermodule $\nabla(\lambda)$, the largest polynomial $G$-subsupermodule of the induced supermodule 
$H^0_G(\lambda)$, for $(m|n)$-hook partition $\lambda$, and properties of certain morphisms $\psi_k$ to derive results related to 
odd linkage for $G$ over a field $F$ of characteristic different from $2$.
\end{abstract}
\maketitle
\section*{Introduction}
Throughout the paper, except for its last section, we assume that the ground field $F$ is algebraically closed and of characteristic zero.

For the definition of the general linear supergroup $G=\GL(m|n)$, its distribution superalgebra $\Dist(G)$, and properties of its induced supermodules $H^0_G(\lambda)$, the reader is asked to consult \cite{brunkuj}.

The linkage principle for reductive algebraic groups, over a field of positive characteristic states, that a weight $\mu$, of the simple factor $L(\mu)$ appearing in the composition series of the induced module $H^0(\lambda)$, is obtained via a repeated application of the dot action of the corresponding affine Weyl group on the weight $\lambda$.
This linkage applied to the maximal even subgroup $G_{ev}\simeq \GL(m)\times \GL(n)$ of $G$ gives rise to the even linkage of weights. 
Over the ground field of characteristic zero, the even linkage is trivial because all $G_{ev}$-modules are semisimple.

However, for general linear supergroup $G$, there is another type of linkage - the odd linkage - that appears due to the presence of odd roots of $G$. The odd linkage is nontrivial even when the characteristic of the ground field is zero. For odd characteristic $p$, the linkage is a combination of even and odd linkages.

The primary focus of this paper is the odd linkage of weights of $G$ and its combinatorial aspects. 
If a weight $\mu$ is such that the simple supermodule $L_G(\mu)$ is a composition factor of $H^0_G(\lambda)$ and $\mu$ is odd-linked to $\lambda$, then it is of the form $\mu=\lambda_{I|J}$, where the pair $(I|J)$ of multi-indices $I=(i_1\cdots i_k)$ and $J=(j_1\cdots j_k)$ with $1\leq i_1, \ldots, i_k\leq m$ and $1\leq j_1, \ldots, j_k\leq n$ is admissible. Therefore from the very beginning, we concentrate our attention on weights of type $\lambda_{I|J}$ as above. The weights $\mu=\lambda_{I|J}$ are such that the simple module $L_G(\mu)$ is a potential composition factor of the induced $G$-supermodule $H^0_G(\lambda)$. A description of composition factors 
$L_G(\mu)$ of $H^0_G(\lambda)$ is referred to as a strong linkage. Aside from the description of blocks for $GL(m|n)$ given in \cite{mz}, a complete description of strong linkage by exhibiting actual composition factors $L_G(\mu)$ of $H^0_G(\lambda)$ is not known.

In the case of characteristic zero, we describe the strong linkage in terms of the surjectivity of a map $\psi_k$ - see Proposition \ref{comp}.
Using this result, we derive the strong linkage principle for robust weights using linear algebra and combinatorial methods in Proposition \ref{rob-link2}.
Versions of the above theorems are valid when the induced supermodule $H^0_G(\lambda)$ is replaced by its maximal polynomial subsupermodule $\nabla(\lambda)$.

For the polynomial supermodule $\nabla(\lambda)$, we can appply combinatorial techniques most efficiently.

The category of polynomial $GL(m|n)$-supermodules of degreee $r$ is equivalent to the category of supermodules over the Schur superalgebra $S_r=S(m|n,r)$.

Our main result, Theorem \ref{link}, states that if both $\lambda$ and $\lambda_{I|J}$ are dominant polynomial weights, and $L_G(\lambda_{I|J})$ is a composition factor of $\nabla(\lambda)$, then $\lambda$ and $\lambda_{I|J}$ are odd-linked through a sequence of polynomial weights. This amount to the linkage principle for the superalgebra $S_r$ in the case of characteristic zero. As far as we know, this is the first such result for Schur superalgebras.

The situation for odd characteristic is more complicated due to the presence of even linkage, that is nontrivial in this case. Partial results in this directions are obtained in the last section of the paper.

We work mostly with modules over the (even) subgroup $G_{ev}$ of $\GL(m|n)$, and one of our primary tools is the explicit description of $G_{ev}$-primitive vectors in 
$\nabla(\lambda)$ established in \cite{fm-p2}, using the terminology of marked tableaux that can be regarded as a realization of the concept of pictures in the sense of Zelevinsky (\cite{zel}) to the general linear supergroups setup.   

The outline of the paper is as follows. 

In Section 1, we show that the maps $\psi_k$ are $G_{ev}$-morphisms.  

In Section 2, we show that there are a plethora of explicit $G_{ev}$-primitive vectors $\pi_{I|J}$ in the domain of the maps $\psi_k$ and that all $G_{ev}$-primitive vectors in the codomain of $\psi_k$ are linear combinations of explicit vectors $\overline{\pi}_{I|J}$.

In Section 3, we compute images $\psi_k(\pi_{I|J})$ and establish preliminary results relating surjectivity of $\psi_k$ to the strong linkage for $G$. 

In Section 4, we demonstrate results related to the strong linkage principle when the weight $\lambda$ is $(I|J)$-robust. This case is easier to handle since there is a basis of $G_{ev}$-primitive vectors consisting of vectors $\overline{\pi}_{I|J}$. The main idea of the strong linkage is already visible in this case.

In Section 5, we consider only polynomial weights $\lambda$ and $\lambda_{I|J}$ corresponding to $(m|n)$-hook partititions, and apply combinatorial techniques (using tableaux, Clausen order, pictures) to prove statements related to the linkage principle for $G$ and the corresponding Schur superalgebras.

In Section 6, we formulate a few results connecting our previous investigations to the linkage principle for general linear supergroups over ground fields $F$ of characteristic $p>2$.

\section{Maps $\psi_k$}
 
From now on, assume that the characteristic of the ground field $F$ is zero. We consider the case of the ground field of odd characteristic at the end of the paper.

Write a generic $(m+n)\times(m+n)$-matrix $C=(c_{ij})$ in an $(m|n)$-block form 
\[C=\begin{pmatrix}C_{11}&C_{12}\\C_{21}&C_{22}\end{pmatrix}.\]
Let $A(m|n)$ be the superbialgebra freely generated by elements $c_{ij}$ for $1\leq i,j \leq m+n$ subject to the supercommutativity relation
\[c_{ij}c_{kl}=(-1)^{|c_{ij}||c_{kl}|}c_{kl}c_{ij},\] 
where the parity $|c_{ij}|=0$ for $1\leq i,j \leq m$ or $m+1\leq i,j\leq m+n$ and $|c_{ij}|=1$ otherwise.
The comultiplication $\Delta$, and counit $\epsilon$ of $A(m|n)$ are defined as 
\[\Delta(c_{ij})=\sum_{k=1}^{m+n} c_{ik}\otimes c_{kj} \text { and } \epsilon(c_{ij})=\delta_{ij} \text{ for } 
1\leq i,j\leq m+n. \] 

The coordinate superalgebra $F[G]$ of the supergroup $G=\GL(m|n)$ is a localization of $A(m|n)$ by the element $det(C_{11})det(C_{22})$.
The general linear supergroup $G=\GL(m|n)$ is the representable functor from supercomutative superalgebras $\SAlg_F$ to groups defined as
\[\GL(m|n)(A)= \Hom_{\SAlg_F}(F[G], A) \text{ for } A\in \SAlg_F.\]
Denote by $A_{ev}(m|n)$ the subsuperalgebra of $A(m|n)$ spanned by the elements $c_{ij}$ for $1\leq i,j \leq m$ or $m+1\leq i,j\leq m+n$, and by 
$K(m|n)$ the localization $A(m|n)(A_{ev}(m|n)\setminus 0)^{-1}$. 

The maximal even subsupergroup $G_{ev}$ of $G$ satisfies $G_{ev}\simeq \GL(m)\times \GL(n)$.
The supergroups $G$ and $G_{ev}$ have the same standard maximal torus $T\simeq (K^*)^{m+n}$ consisting of diagonal matrices. The weights of $G$ and $G_{ev}$ are identical, and will be denoted by 
\[\lambda=(\lambda_1, \ldots, \lambda_m|\lambda_{m+1}, \ldots, \lambda_{m+n})=(\lambda_1^+, \ldots, \lambda^+_m|\lambda^-_1, \ldots, \lambda^-_n).\] 
We identify it with a pair of weights 
$\lambda^+=(\lambda^+_1, \ldots, \lambda^+_m)$ of $\GL(m)$ and $\lambda^-=(\lambda^-_1, \ldots, \lambda^-_n)$ of $\GL(n)$.
Denote by $\delta_s$ the weight of $G$ such that $(\delta_s)_s=1$ and all other components $(\delta_s)_t$ for $t\neq s$ vanish. Then the weight of the element $c_{ij}$ is $\delta_j$.

We work inside the induced supermodule $H^0_G(\lambda)$ considered using  
the superspace isomorphism $\tilde{\phi}$ given in Lemma 5.1 of \cite{zubal} as $H^0_{G_{ev}}(\lambda)\otimes S(C_{12})\to H^0_G(\lambda)$, where $S(C_{12})$ is the supersymmetric superalgebra of the superspace $C_{12}$. The map $\tilde{\phi}$ is a restriction of the multiplicative morphism $\phi:F[G]\to F[G]$ given on generators as
follows: 
\begin{equation*}
C_{11}\mapsto C_{11}, C_{21}\mapsto C_{21}, C_{12}\mapsto
C_{11}^{-1}C_{12}, C_{22}\mapsto C_{22}-C_{21}C_{11}^{-1}C_{12}.
\end{equation*}
It follows from p.985 of \cite{fm-p1} that with appropriately defined $G_{ev}$-module structure on $C_{12}$, the map $\tilde{\phi}$ is an isomorphism of $G_{ev}$-supermodules.

Let $D$ be the determinant of $C_{11}$ of weight $\delta_1+\ldots+\delta_m$, and $A=(A_{ij})$ be the adjoint matrix of $C_{11}$. Then 
\begin{equation*}
y_{ij}=\phi(c_{ij})=\frac{A_{i1}c_{1j}+A_{i2}c_{2j}+\ldots +A_{im}c_{mj}}{D}
\end{equation*}
for $1\leq i\leq m$ and $m+1\leq j \leq m+n$ has weight $-\delta_i+\delta_{j}$. The induced supermodule $H^0_G(\lambda)$ is identified with $\oplus_{k=0}^{mn} V\otimes \wedge^k Y$, where $V=H^0_{G_{ev}}(\lambda)$ is the even-induced supermodule and $Y$ is the span of the elements $y_{ij}$ for $1\leq i\leq m,m+1\leq j\leq m+n$.

The structure of induced modules over general linear groups is described using bideterminants. Since we consider $H^0_{G_{ev}}(\lambda)$ embedded inside $H^0_G(\lambda)$ using the map $\phi$,
we need to adjust the notation for bideterminants to accommodate the effect of the map $\phi$.

For $1\leq i_1, \ldots, i_s \leq m$ denote by $D^+(i_1, \ldots, i_s)$
the determinant 
\begin{equation*}
\begin{array}{|ccc|}
c_{1,i_1} & \ldots & c_{1,i_s} \\ 
c_{2,i_1} & \ldots & c_{2,i_s} \\ 
\ldots & \ldots & \ldots \\ 
c_{s,i_1} & \ldots & c_{s,i_s}%
\end{array}%
\end{equation*}
of weight $\delta_{i_1}+ \ldots + \delta_{i_s}$.
Clearly, if some of the numbers $i_1, \ldots, i_s$ coincide, then $%
D^+(i_1,\ldots, i_s)=0$.

For $m+1\leq j_1, \ldots, j_s \leq m+n$ denote by $D^-(j_1, \ldots, j_s)
$ the determinant 
\begin{equation*}
\begin{array}{|ccc|}
\phi(c_{m+1,j_1}) & \ldots & \phi(c_{m+1,j_s}) \\ 
\phi(c_{m+2,j_1}) & \ldots & \phi(c_{m+2,j_s}) \\ 
\ldots & \ldots & \ldots \\ 
\phi(c_{m+s,j_1}) & \ldots & \phi(c_{m+s,j_s})%
\end{array}%
\end{equation*}
of weight $\delta_{j_1}+\ldots + \delta_{j_s}$.
Clearly, if some of the numbers $j_1, \ldots, j_s$ coincide, then $%
D^-(j_1,\ldots, j_s)=0$.

The images of the highest vector $v_+$ of $H^0_{\GL(m)}(\lambda^+)$ and the highest vector $v_-$ of $H^0_{\GL(n)}(\lambda^-)$ under the map $\phi$ are identified 
as the following elements of $H^0_{G_{ev}}(\lambda)$: 
\[v^+=\prod_{a=1}^m D^+(1,\ldots, a)^{\lambda^+_a-\lambda^+_{a+1}}, \qquad
v^-=\prod_{b=1}^n D^-(m+1, \ldots, m+b)^{\lambda^-_b - \lambda^-_{b+1}},\]
where $\lambda^+_{m+1}=0=\lambda^-_{n+1}$. The product $v=v^+v^-$ is the highest vector of $H^0_G(\lambda)$ of weight $\lambda$.

The superderivation $_{ij}D$ of parity $|_{ij}D|=|c_{ij}|$ is given by $(c_{kl}) _{ij}D=\delta_{li}c_{kj}$, where $\delta_{li}$ stands for the Kronecker delta. It satisfies the property 
\[(ab)_{ij}D=(-1)^{|b||_{ij}D|}(a)_{ij}Db+a(b)_{ij}D\] for $a,b\in A(m|n)$. The action of $_{ij}D$ extends to $F[G]$ using 
the quotient rule
\[(\frac{a}{b})_{ij}D=\frac{(a)_{ij}Db-a(b)_{ij}D}{b^2}\] 
for $a,b\in A(m|n)$ and $b$ even.

The action of $_{ij}D$ on elements of $H^0_G(\lambda)$ was computed in Section 2 of \cite{fm-p1}. 

For $k=0, \ldots, mn$ denote by $T_k$ the supermodule $V\otimes Y^{\otimes k}$ and by $F_k$ the supermodule $V\otimes \wedge^k Y$ of $H^0_G(\lambda)$. The supermodule $F_k$ is called the $k$-\emph{floor} of $H^0_G(\lambda)$. It is essential to note that both $T_k$ and $F_k$ are $G_{ev}$-modules.

Let us denote $\ell(\mu^+)=\sum_{i=1}^m \mu^+_i$, $\ell(\mu^-)=\sum_{j=1}^n \mu^-_j$ and $\ell(\mu)=\ell(\mu^+|\mu^-)=\ell(\mu^+)+\ell(\mu^-)$. If $M$ is an indecomposable $G$-supermodule, the value of $\ell(\mu)$ for all non-zero weight spaces $M_{\mu}$ remains constant. If $M$ is an indecomposable $G_{ev}$-module, then 
$\ell(\mu^+)$ and $\ell(\mu^-)$ on all nonzero weight spaces $M_{\mu}$ remain constant. Therefore, the study the $G_{ev}$-structure of $H^0_G(\lambda)$ leads naturally to the grading by $H^0_G(\lambda)$ by the floors $F_k$ as above.   

In earlier papers \cite{fm1, gm}, we have considered $G_{ev}$-morphisms $\phi_k: V\otimes \wedge^k Y \to V\otimes \wedge^k Y$ defined by 
$\phi_k(v\otimes (y_{i_1j_1}\wedge \cdots \wedge y_{i_kj_k}))=(v)_{i_1j_1}D\cdots _{i_kj_k}D$ that proved useful when investigating the $G_{ev}$-module structure of $F_k$.

The focus of this paper are maps
$\psi_k:T_k \to F_k$ for $k=0, \ldots, mn$ defined as 
\[\begin{aligned}&\psi_k(v\otimes (y_{i_1j_1}\otimes \cdots \otimes y_{i_kj_k}))=(v\wedge y_{i_1j_1}\wedge \cdots \wedge y_{i_{k-1}j_{k-1}})_{i_kj_k}D=\\
&(-1)^{k-1}(v)_{i_kj_k}D\wedge y_{i_1j_1}\wedge \cdots \wedge y_{i_{k-1}j_{k-1}}+\\
&\sum_{l=1}^{k-1} (-1)^{k-l-1} v\wedge y_{i_1j_1}\wedge \cdots \wedge (y_{i_lj_k}\wedge y_{i_kj_l})\wedge \cdots \wedge y_{i_{k-1}j_{k-1}},
\end{aligned}\]
where the second equality follows from Lemma 2.1 of \cite{fm-p1}.

We abuse the notation, and instead of expressions like $v\otimes(y_{i_1j_1}\wedge \cdots \wedge y_{i_{k-1}j_{k-1}})$, 
we write $v\wedge y_{i_1j_1}\wedge \cdots \wedge y_{i_{k-1}j_{k-1}}$ and so on. Also, we identify expressions like 
$(v)_{i_kj_k}D\wedge y_{i_1j_1}\wedge \cdots \wedge y_{i_{k-1}j_{k-1}}$
belonging to $(V\otimes Y)\wedge Y \cdots \wedge Y$ with the corresponding element of $V\otimes (Y \wedge \cdots \wedge Y)$ and so on. This should not lead to any confusion, but it simplifies the exposition.

Of course $\phi_0=\psi_0$ and $\phi_1=\psi_1$.

Let $G$ be an arbitrary supergroup, and $A=F[G]$ be its coordinate superalgebra. Let $\Dist(G)$ be the superalgebra
of distributions of $G$ and $Lie(G)\subseteq \Dist(G)$ be the Lie superalgebra of $G$.
One can define (left) actions of $\Dist(G)$ on $A$ by 
\[\phi\cdot a=\sum a_1\phi(a_2)\] and by \[\phi\star a=\sum (-1)^{|\phi||a_1|} a_1\phi(a_2),\] respectively, where $\Delta(a)=\sum a_1\otimes a_2$. 

If $\phi\in Lie(G)$, then $\phi$ acts on $A$ as a right superderivation concerning the action $\cdot$, and as a left superderivation concerning the action $\star$. 
The relationship between both actions is $\phi\cdot a=(-1)^{|\phi|(|a|+1)}\phi\star a$ for $a\in A$.
The $\Dist(G)$-subsupermodules of $A$ for the $\star$ action are the same as those for to the $.$ action, and the morphisms of supermodules for the $\star$ action correspond to morphisms of supermodules for the $.$ action.

The proof of the next Lemma uses the relationship of superderivation $_{ij}D$ to action of the distribution algebra $\Dist(G)$.

\begin{lm}\label{evmorph}
Every map $\psi_k$, as above, is a $G_{ev}$-morphism.
\end{lm}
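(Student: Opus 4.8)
The plan is to show that $\psi_k$ commutes with the $G_{ev}$-action by reinterpreting the superderivation $_{ij}D$ in terms of the distribution superalgebra $Dist(G)$ and using the fact that $G_{ev}$-equivariance can be checked on the level of $Dist(G_{ev})$-action. First I would recall from Section~2 of \cite{fm-p1} how $_{ij}D$ acts on $H^0_G(\la)$, specifically that for indices with $1\le i\le m$, $m+1\le j\le m+n$ (the odd case relevant to the $y_{ij}$), the operator $_{ij}D$ coincides (up to sign) with the action of the corresponding odd root element of $Lie(G)$ via the $\star$-action on $A=F[G]$. Since $\psi_k$ is built by multiplying in the $\wedge$ and then applying one such $_{ij}D$, the task reduces to checking that a single $_{ij}D$, together with the wedge multiplication $V\otimes \wedge^{k-1}Y \to F_k$ (which is visibly $G_{ev}$-equivariant since $F_k$ and $T_k$ are $G_{ev}$-modules and the wedge is built from the coalgebra structure), intertwines the two $G_{ev}$-module structures.

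The key steps, in order: (1) Fix $g\in G_{ev}$, or better, fix $\xi\in Dist(G_{ev})$; since $G_{ev}$ is (in char $0$) connected reductive, a linear map between $G_{ev}$-modules is a $G_{ev}$-morphism iff it commutes with the $Dist(G_{ev})$-action, so it suffices to verify $\psi_k(\xi\cdot x)=\xi\cdot\psi_k(x)$ for $\xi$ ranging over generators of $Dist(G_{ev})$ (the even root vectors $e_{ij}$, $f_{ij}$ with $i,j$ in the same block, and the torus part). (2) Express the $G_{ev}$-action on $T_k=V\otimes Y^{\otimes k}$ and on $F_k=V\otimes\wedge^k Y$ concretely: it is the tensor/wedge of the action on $V=H^0_{G_{ev}}(\la)$ and on each copy of $Y=F[y_{ij}]$, and recall from \cite{fm-p1} how $Dist(G_{ev})$ acts on the generators $y_{ij}$. (3) Use the commutation relations between the even superderivations coming from $Dist(G_{ev})$ and the odd superderivation $_{ij}D$ inside $Dist(G)$: these are governed by the super-bracket $[_{kl}D,{}_{ij}D]$ in $Lie(G)$, which for even/odd pairs gives again an odd $_{i'j'}D$-type operator with the indices shuffled exactly the way the defining formula for $\psi_k$ shuffles the $y$'s. (4) Combine: applying $\xi$ to $\psi_k(v\otimes(y_{i_1j_1}\otimes\cdots))$ and using the Leibniz rule on the product $(v\wedge y_{i_1j_1}\wedge\cdots\wedge y_{i_{k-1}j_{k-1}})\,_{i_kj_k}D$, the terms where $\xi$ hits the $_{i_kj_k}D$-part produce exactly $\psi_k$ applied to the terms where $\xi$ acts on the last tensor factor $y_{i_kj_k}$ of $x$, while the terms where $\xi$ hits the wedge of the first $k-1$ factors match $\psi_k$ applied to $\xi$ acting there; so everything reassembles into $\psi_k(\xi\cdot x)$.

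The main obstacle I anticipate is step~(3)/(4): the signs. Because we are in the super setting, every time $\xi$ (which may be odd in $Dist(G)$, though for $G_{ev}$-equivariance we only need even $\xi$, which helps) or $_{ij}D$ is moved past a wedge factor one picks up a Koszul sign, and the defining formula for $\psi_k$ already contains the alternating signs $(-1)^{k-l-1}$; showing these all cancel correctly requires a careful bookkeeping of the relationship $\phi\cdot a=(-1)^{|\phi|(|a|+1)}\phi\star a$ between the two $Dist(G)$-actions recalled in the excerpt, together with the super-Leibniz rule $(ab)_{ij}D=(-1)^{|b||_{ij}D|}(a)_{ij}D\,b+a(b)_{ij}D$. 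A clean way to sidestep much of this is to observe that $\psi_k(x) = \mu_k\big((\mathrm{id}\otimes\,{}_{i_kj_k}D)\circ(\text{multiply first }k-1\text{ into }V)\big)$ where $\mu_k$ is the canonical (super)symmetrization/wedge map and the odd superderivation part is intrinsically the $\star$-action of a fixed element of $Lie(G)$ on $F[G]$; since the $\star$-action of $Dist(G)$ commutes with the left $G$-action (hence with the left $G_{ev}$-action) on $F[G]=A$, and all the auxiliary maps ($\tilde\phi$, the multiplication in $A$, the projection to $\wedge^k$) are $G_{ev}$-equivariant by construction, the composite $\psi_k$ is automatically $G_{ev}$-equivariant. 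I would present the argument in this structural form, relegating the explicit sign verification to a short lemma-internal computation or a citation to \cite{fm-p1}.
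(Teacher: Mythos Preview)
Your steps (1)--(4) are essentially the infinitesimal version of the paper's argument. The paper also identifies the superderivation $_{ij}D$ with the $\cdot$-action of $e_{ji}\in Lie(G)$, so that $\psi_k(v\otimes y_{i_1j_1}\otimes\cdots\otimes y_{i_kj_k})=e_{j_ki_k}\cdot(v\wedge y_{i_1j_1}\wedge\cdots\wedge y_{i_{k-1}j_{k-1}})$, and then uses the group-level identity $g(e\cdot w)=Ad(g)(e)\cdot(gw)$ (from \cite{mz}) rather than commutators with $\xi\in Dist(G_{ev})$. The one substantive computation---your step~(3), that passing $\xi$ through $_{i_kj_k}D$ produces exactly the terms coming from $\xi$ acting on $y_{i_kj_k}$---is precisely the statement that $y_{ij}\mapsto e_{ji}$ is a $G_{ev}$-module isomorphism from $Y$ to the odd root space with the adjoint action, and the paper verifies this explicitly by writing out $Ad(h)e_{ji}$ and $h\cdot y_{ij}$ for $h\in G_{ev}$. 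So your outline is correct; you should be aware that the content of the proof lives in that verification, which you have only asserted.

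Your ``clean shortcut'' at the end, however, has a genuine gap. You write that ``the odd superderivation part is intrinsically the $\star$-action of a fixed element of $Lie(G)$'' and then invoke that the $\star$-action of $Dist(G)$ commutes with the left $G$-action. But $_{i_kj_k}D$ is \emph{not} a fixed element: it depends on the last tensor factor $y_{i_kj_k}$, which itself carries the $G_{ev}$-action. For a fixed $e\in Lie(G)$ the map $w\mapsto e\cdot w$ does commute with the left $G$-action on $A$, but $\psi_k$ is rather the evaluation map $F_{k-1}\otimes Y\to F_k$, $(w,y_{ij})\mapsto e_{ji}\cdot w$, and its $G_{ev}$-equivariance requires exactly that the identification $Y\ni y_{ij}\mapsto e_{ji}$ intertwine the $G_{ev}$-action on $Y$ with the adjoint $G_{ev}$-action on the odd part of $Lie(G)$. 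That is the step you cannot bypass; it is the whole proof.
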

\begin{proof}
Since the action of a superderivation $_{ij}D$ on $A(m|n)$ corresponds to the $\cdot$ action of $e_{ji}$ on $A(m|n)$ - see \cite{lsz}, we obtain 
\[\begin{aligned}&\psi_k(v \otimes y_{i_1 j_1}\otimes \cdots \otimes y_{i_{k-1} j_{k-1}} \otimes y_{i_k j_k})=(v \wedge y_{i_1 j_1}\wedge \cdots \wedge y_{i_{k-1} j_{k-1}})_{i_k j_k}D\\
&=e_{j_k i_k}\cdot (v \wedge y_{i_1 j_1}\wedge \cdots \wedge y_{i_{k-1} j_{k-1}}),
\end{aligned}\]

Using the comments preceding Lemma \ref{evmorph} and the corresponding modification of Lemma 3 of \cite{mz}, we derive that 
for every $g\in G_{ev}$, there is 
\[\begin{aligned} &g(\psi_k(v\otimes y_{i_1 j_1}\otimes \cdots \otimes y_{i_{k-1} j_{k-1}} \otimes y_{i_k j_k})) 
= g(e_{j_k i_k}\cdot (v \wedge y_{i_1 j_1}\wedge \cdots \wedge y_{i_{k-1} j_{k-1}}))\\
&=Ad(g)(e_{j_k i_k})\cdot (g(v \wedge y_{i_1 j_1}\wedge \cdots \wedge y_{i_{k-1} j_{k-1}})).
\end{aligned}\]
Since the adjoint action of $h\in G_{ev}$ on $e_{ji}$, where $1\leq i\leq m<j\leq m+n$, is given as 
\[Ad(h)e_{ji}=he_{ji}h^{-1}=\sum_{l=m+1}^{m+n} \sum_{k=1}^m  h_{lj} e_{lk} (h^{-1})_{ik}\] and its action on $y_{ij}$ is given as
\[hy_{ij}=\sum_{k=1}^m\sum_{l=m+1}^{m+n} (h^{-1})_{ik}y_{kl}h_{lj},\]
we conclude that the map $y_{ij}\mapsto e_{ji}$ induces an isomorphism of $G_{ev}$-supermodules $Y=\sum_{1\leq i\leq m < j\leq m+n}F y_{ij}$ and 
$\sum_{1\leq i\leq m < j\leq m+n}F e_{ji}$ and the claim follows.
\end{proof}

\section{Even-primitive vectors $\pi_{I|J}$}

\subsection{Notation}

We start by recalling the definition of a $G_{ev}$-primitive vector $w$.
Let $U^-_{ev}$ be the unipotent subgroup of $G_{ev}$ corresponding to lower triangular matrices from $G_{ev}$ with all diagonal entries equal to 1. A vector $w$ of a module $M$ is called $G_{ev}$-primitive (or even-primitive) vector of $M$
if every element of $U^-_{ev}$ annihilates $w$.

Next, we review the definition of $\pi_{I|J}$, $v_{I|J}$, $\rho_{I|J}$ from \cite{fm-p1}.

Assume that $(I|J)=(i_1\ldots i_k|j_1\ldots j_k)$ is a multi-index such that $1\leq i_1, \ldots, i_k \leq m$ and $1\leq j_1, \ldots, j_k\leq n$. We define the length of $(I|J)$ as $k$.
Define the content $\cont(I)=(x_1^+, \ldots, x^+_m)$, where $x^+_s$ is the number of occurrences of the symbol $s$ in $i_1\ldots i_k$, the content $\cont(J)= (x^-_{1}, \ldots, x^-_{n})$, where $x^-_t$ is the number of occurrences of the symbol $t$ in $j_1\ldots j_k$. Finally, define $\cont(I|J)=(-\cont(I)|\cont(J))$.

Further, denote
\begin{equation}\label{laIJ}
\lambda_{I|J}=\lambda-\sum_{s=1}^k \delta_{i_s}+\sum_{s=1}^k \delta_{m+j_s},
\end{equation}
In particular, if $I=\{i\}$ and $J=\{j\}$, then 
\begin{equation}\label{laij}
\lambda_{ij}=\lambda_{I|J}=(\lambda^+_1, \ldots, \lambda^+_i-1, \ldots, \lambda^+_m|\lambda^-_1, \ldots, \lambda^-_j+1, \ldots, \lambda^-_n).
\end{equation}

It is important to note that if the weight space of $H^0_G(\lambda)$ corresponding to the weight $\lambda_{I|J}$ is nonzero, then $\lambda_{I|J}$ is a weight of the $k$th floor $F_k$, $\ell(\lambda_{I|J}^+)=\ell(\lambda^+)-k$, 
and $\ell(\lambda_{I|J}^-)=\ell(\lambda^-)+k$.

The main reason to consider the weights $\lambda_{I|J}$ is that they are the only weights of $H^0_G(\lambda)$ that can be odd-linked to $\lambda$. For the definition of odd linkage, see Section \ref{sec4}.

For $1\leq i\leq m$ and $1\leq j \leq n$ we set 
\[\rho_{ij}=\sum_{r=i}^m D^+(1, \ldots, i-1,r)\sum_{s=1}^{j} (-1)^{s+j} D^-(m+1, \ldots, \widehat{m+s},\ldots, m+j)y_{r,m+s}.\]
The weight of the element $\rho_{ij}$ is $\sum_{1\leq t<i} \delta_t + \sum_{1\leq t\leq j} \delta_{m+t}$.

For each $(I|J)=(i_1\cdots i_k|j_1\cdots j_k)$ such that $1\leq i_1, \ldots, i_k \leq m$ and $1\leq j_1, \ldots, j_k\leq n$
denote the element $y_{i_1,m+j_1}\otimes \cdots \otimes y_{i_k,m+j_k}$ by $y_{I|J}$, the element $y_{i_1,m+j_1}\wedge \cdots \wedge y_{i_k,m+j_k}$ by $\overline{y}_{I|J}$
and $\otimes_{s=1}^k \rho_{i_sj_s}$ by $\rho_{I|J}$.
Abusing the notation slightly, we consider $\rho_{I|J}$ as an element of $A(m|n)\otimes Y^{\otimes  k}$ by changing the order of terms in $\rho_{i_1j_1}\otimes \ldots \otimes \rho_{i_kj_k}$.
The weight of the element $y_{I|J}$ is $-\sum_{s=1}^k \delta_{i_s} +\sum_{s=1}^k \delta_{m+j_s}$, and the weight of $\rho_{I|J}$ is 
\[\sum_{s=1}^k (\sum_{1\leq t<i_s}\delta_t+ \sum_{1\leq t\leq j_s} \delta_{m+t}).\]

For such $(I|J)$ denote the elements
\[v_{I|J}=\frac{v}{\prod_{s=1}^k D^+(1, \ldots, i_s)\prod_{s=1}^k D^-(m+1,\ldots, m+j_s-1)},\] where 
we set $D^-(m+1,\ldots, m+j_s-1)=1$ for $j_s=1$.
Recall that 
\[v=\prod_{a=1}^m D^+(1,\ldots, a)^{\lambda^+_a-\lambda^+_{a+1}}\prod_{b=1}^n D^-(m+1, \ldots, m+b)^{\lambda^-_b - \lambda^-_{b+1}},\]
where $\lambda^+_{m+1}=0=\lambda^-_{n+1}$.
A slight abuse of notation allows us to consider $v_{I|J}$ as an element in $Q_k=K(m|n)\otimes Y^{\otimes k}$.
The weight of $v_{I|J}$ is 
\[\lambda-\sum_{s=1}^k(\sum_{1\leq t\leq i_s} \delta_t+\sum_{1\leq t<j_s} \delta_{m+t}).\]

As in the proof of Lemma 4.1 of \cite{fm-p1}, we can verify that $v_{I|J}$ and $\rho_{I|J}$ are annihilated by 
all divided power $_{ij}D^{(r)}$ of even superderivation $_{ij}D$ , which implies that 
the element 
\begin{equation}\label{piIJ}
\pi_{I|J}=v_{I|J}\rho_{I|J}
\end{equation}
is a $G_{ev}$-primitive vector in $Q_k$ of weight $\lambda_{I|J}$.

Denote by $\overline{\rho}_{I|J}$ the images of $\rho_{I|J}$ under the natural map $Q_k=K(m|n)\otimes Y^{\otimes k} \to \overline{Q}_k=K(m|n) \otimes \wedge^k Y$, and 
$\overline{\pi}_{I|J} = v_{I|J}\overline{\rho}_{I|J}$. 

Denote by $\lhd$ the dominance order on weights of $G$ given by $\mu \lhd \lambda$ if and only if
$\lambda-\mu$ is a sum of the simple roots of $G$.

If the weight of $u$ is $\zeta$, and $\cont(K|L)=\nu$, then the weight of $u\otimes y_{K|L}$
is $\zeta+\nu\unlhd \nu$. Since $T_k=H^0_{G_{ev}}\otimes Y^{\otimes k}$, 
every $w \in T_k$ of weight $\mu$ can be written in the form $w=\sum_{\kappa\unlhd \lambda} w_{\kappa}$,
where 
\[w_{\kappa}=\sum\limits_{\substack{(K|L)\\ \cont(K|L)=\mu-\kappa}}  w_{K|L}\otimes y_{K|L},\]
each $w_{K|L}$ is a vector from $H^0_{G_{ev}}(\lambda)$ of weight $\kappa$, and $(K|L)$ is of length $k$.

We say that a weight $\kappa \unlhd \lambda$ is a leading weight in $w$ if $w_\kappa\neq 0$ and $w_{\mu}=0$ for every $\mu$ such that $\kappa \lhd \mu \unlhd \lambda$.
Note that there can be more than one leading weight for $w$.

We call $(I|J)$ admissible if $\lambda_{I|J}$ is dominant, $i_1\leq \ldots \leq i_k$, and if indices $t_1<t_2$ are such that $i_{t_1}=i_{t_2}$, then $j_{t_1}<j_{t_2}$.

For $w\in F_k$, we have $w=\sum_{\kappa\unlhd \lambda} w_{\kappa}$, where 
\[w_{\kappa}=\sum\limits_{\substack{(K|L) \, \admissible \\ \cont(K|L)=\mu-\kappa}}  w_{K|L}\otimes \overline{y}_{K|L},\]
$w_{K|L}\in H^0_{G_{ev}}(\lambda)$ is of weight $\kappa$ and $(K|L)$ is of length $k$.  The definition of a leading weight is analogous to the one given above.

We apply similar descriptions to elements from $Q_k$ and $\overline{Q}_k$, in particular to vectors $\pi_{I|J}$ and $\overline{\pi}_{I|J}$.
Then $\lambda$ is the unique leading weight of both $\pi_{I|J}$ and $\overline{\pi}_{I|J}$, while $(\pi_{I|J})_{\lambda}=vy_{I|J}$ and $(\overline{\pi}_{I|J})_{\lambda}=v\overline{y}_{I|J}$.

\subsection{Even-primitive vectors of weights $\lambda_{I|J}$ in $H^0_G(\lambda)$}

Let $(I|J)$ be an admissible multi-index. Following \cite{fm-p1}, we say that the weight $\lambda$ is {\it $(I|J)$-robust} provided 
the symbol $i_s<m$ appears at most $\lambda^+_{i_s}-\lambda^+_{i_s+1}$ times in $I$,
symbol $m$ appears at most $\lambda^+_m$ times in $I$, and 
symbol $j_t>1$ appears at most $\lambda^-_{j_t-1}-\lambda^-_{j_t}$ times in $J$.

In this paper, we consider robust weights first since the even-primitive vectors of weight $\lambda_{I|J}$ such that $\lambda$ is $(I|J)$-robust have an easy description; many results have more transparent formulations and proofs for robust weights. Later we handle the general case.

Let us first describe a basis of even-primitive vectors in $F_k$ of weight $\mu$ under the assumption that the weight $\lambda$ is $(I|J)$-robust and $\mu=\lambda+\cont(I|J)$.

\begin{pr}\label{basis}
Assume the weight $\lambda$ is $(I|J)$-robust. Then the vectors $\pi_{K|L}$, where $\cont(K|L)=\cont(I|J)$, form a basis of even-primitive vectors of weight $\lambda_{I|J}$ in $T_k$. Additionally, the vectors $\overline{\pi}_{K|L}$, where $\cont(K|L)=\cont(I|J)$ and $(K|L)$ is admissible, form a basis of even-primitive vectors of weight $\lambda_{I|J}$ in $F_k$. 
\end{pr}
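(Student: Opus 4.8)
The plan is to prove the two claims in tandem, reducing the $F_k$-statement to the $T_k$-statement via the natural projection $T_k \to F_k$. First I would establish that each $\pi_{K|L}$ with $cont(K|L) = cont(I|J)$ really is a $G_{ev}$-primitive vector of weight $\la_{I|J}$ lying in $T_k$ (not merely in the larger localization $Q_k$): primitivity was recorded in \eqref{piIJ}, and the weight computation follows from the definitions of $cont$, $\rho_{ij}$ and $v_{I|J}$, so the only genuine point is that $\pi_{K|L} \in T_k = V \otimes Y^{\otimes k}$. This is where $(I|J)$-robustness enters — the denominators $D^+(1,\ldots,i_s)$ and $D^-(m+1,\ldots,m+j_s-1)$ in $v_{I|J}$ must all divide $v = v^+v^-$, and robustness is precisely the condition ensuring the exponents $\la^+_a - \la^+_{a+1}$ (respectively $\la^-_b - \la^-_{b+1}$) in $v^{\pm}$ are large enough to absorb these denominators, so that $v_{K|L} \in H^0_{G_{ev}}(\la)$ and hence $\pi_{K|L} = v_{K|L}\rho_{K|L} \in T_k$.

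Next I would prove linear independence. Here I use the leading-weight decomposition recalled just before the proposition: each $\pi_{K|L}$ has $\la$ as its unique leading weight with $(\pi_{K|L})_\la = v\, y_{K|L}$. Since the monomials $y_{K|L} = y_{i_1j_1}\otimes\cdots\otimes y_{i_kj_k}$ over distinct multiindices $(K|L)$ are linearly independent in $Y^{\otimes k}$, and $v \neq 0$ in $V$, the vectors $v\,y_{K|L}$ are linearly independent; a nontrivial relation $\sum c_{K|L}\pi_{K|L} = 0$ would force $\sum c_{K|L}\, v\,y_{K|L} = 0$ by projecting onto the $\la$-component, giving all $c_{K|L} = 0$. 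For the $\overline{\pi}_{K|L}$ in $F_k$ the same argument works, now using that $(\overline{\pi}_{K|L})_\la = v\,\overline{y}_{K|L}$ and that the $\overline{y}_{K|L}$ for distinct \emph{admissible} $(K|L)$ are linearly independent in $\wedge^k Y$.

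The main obstacle is the spanning (completeness) claim: every $G_{ev}$-primitive vector of weight $\la_{I|J}$ in $T_k$ is a linear combination of the $\pi_{K|L}$'s, and similarly in $F_k$. My approach would be to appeal directly to the classification of $G_{ev}$-primitive vectors in $\nabla(\la)$ established in \cite{fm-p1} and \cite{fm-p2}: those results give an explicit description (in terms of marked tableaux / pictures) of all $G_{ev}$-primitive vectors of a given weight, and in the $(I|J)$-robust case that description collapses — there are no nontrivial ``straightening'' relations among the tableaux — so the count of basis vectors matches the number of multiindices $(K|L)$ with $cont(K|L) = cont(I|J)$ (respectively the admissible ones). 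Concretely, I would compare dimensions: show that $\dim$ of the space of $G_{ev}$-primitive vectors of weight $\la_{I|J}$ in $T_k$ equals $\#\{(K|L) : cont(K|L)=cont(I|J)\}$ using the robustness hypothesis to guarantee that each such $v_{K|L}$ is a genuinely nonzero independent highest-weight vector in $V$, and then invoke that the $\pi_{K|L}$ are independent primitive vectors of that weight, forcing them to be a basis. The $F_k$-statement then follows either by the same dimension count with admissible multiindices, or by noting that the projection $T_k \to F_k$ sends the spanning set $\{\pi_{K|L}\}$ onto $\{\overline{\pi}_{K|L}\}$ and is compatible with the $G_{ev}$-action (Lemma~\ref{evmorph} is the relevant statement that these are maps of $G_{ev}$-modules), so primitive vectors map to primitive vectors and a spanning set maps to a spanning set. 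The delicate bookkeeping — verifying robustness really does kill all relations and that no primitive vector of weight $\la_{I|J}$ escapes this description — is the part that needs the machinery of \cite{fm-p1,fm-p2} rather than a bare-hands argument.
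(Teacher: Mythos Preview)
Your treatment of primitivity, membership in $T_k$ (via robustness killing the denominators in $v_{K|L}$), and linear independence (via the leading term $v\,y_{K|L}$) agrees with the paper. The divergence is in the spanning step.

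The paper does \emph{not} appeal to the full classification from \cite{fm-p1,fm-p2} or to any dimension count. It reuses the very leading-weight machinery you set up for independence: given an even-primitive $w\in F_k$ of weight $\la_{I|J}$, one shows directly that $\la$ is its unique leading weight. If some $\nu\lhd\la$ were leading, then for every even superderivation $_{ij}D$ the vanishing $(w)_{ij}D=0$ forces $(w_\nu)_{ij}D=0$; unpacking $w_\nu=\sum w_{K|L}\otimes\overline{y}_{K|L}$ and using that $\nu$ is leading, each $w_{K|L}\in H^0_{G_{ev}}(\la)$ is itself even-primitive of weight $\nu\neq\la$, which is impossible since $H^0_{G_{ev}}(\la)$ is irreducible in characteristic zero. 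Thus $\la$ is the leading weight; writing $w_\la=\sum c_{K|L}\,v\,\overline{y}_{K|L}$, the difference $w-\sum c_{K|L}\,\overline{\pi}_{K|L}$ is even-primitive with no $\la$-leading part, hence zero. The $T_k$-case is identical. So spanning is a two-line bare-hands argument, not a consequence of the tableau classification.

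Your proposed route via a dimension comparison could presumably be pushed through, but it is heavier and carries two risks you would need to address: you would have to actually compute the multiplicity of $L_{G_{ev}}(\la_{I|J})$ in $V\otimes Y^{\otimes k}$ (not just assert that robustness makes the count come out right), and you would have to verify that the results of \cite{fm-p2} you invoke do not themselves rest on this proposition or its analogue Proposition~\ref{genbasis}. A minor point: the natural projection $T_k\to F_k$ is $G_{ev}$-equivariant simply because $Y^{\otimes k}\to\wedge^k Y$ is; Lemma~\ref{evmorph} concerns the different map $\psi_k$.
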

\begin{proof}
Since $\lambda$ is $(I|J)$-robust and $\cont(K|L)=\cont(I|J)$, $\lambda$ is also $(K|L)$-robust, $\pi_{K|L}\in T_k$ and $\overline{\pi}_{K|L}\in F_k$.
Lemma 4.1 of \cite{fm-p1} implies that vectors $\pi_{K|L}$ are even-primitive vectors in $T_k$, and $\overline{\pi}_{K|L}$ are even-primitive vectors in $F_k$.
It is clear that vectors $\pi_{K|L}$, where $\cont(K|L)=\cont(I|J)$, are linearly independent vectors of weight $\lambda_{I|J}$.  
By Lemmas 4.2 and 4.3 of \cite{fm-p1}, the vectors $\overline{\pi}_{K|L}$, where $\cont(K|L)=\cont(I|J)$ and $(K|L)$ is admissible, are linearly independent of weight $\lambda_{I|J}$. 

Let $w\neq 0$ be an even-primitive vector of weight $\mu=\lambda_{I|J}$ from $F_k$ and write $w=\sum_{\kappa\unlhd \lambda} w_{\kappa}$ as above. We show that $\lambda$ is the leading weight of $w$. Assume this not the case, and that $\nu\neq \lambda$ is a leading weight of $w$. 

Let $_{ij}D$ be an even superderivation. Then $(w)_{ij}D=0$ implies $(w_{\nu})_{ij}D=0$. Since 
$w_{\nu}=\sum\limits_{\substack{(K|L) \, \admissible \\ \cont(K|L)=\mu-\nu}}  w_{K|L}\otimes \overline{y}_{K|L}$
and $\nu$ is a leading weight of $w$, from the action of $_{ij}D$ on elements of $H^0_G(\lambda)$ and $\overline{y}_{K|L}$, we conclude that 
$(w_{K|L})_{ij}D=0$ for each admissible $(K|L)$ with $\cont(K|L)=\mu-\nu$. Since this is true for every even superderivation $_{ij}D$, we obtain that each such 
$w_{K|L}$ is an even-primitive vector of $H^0_{G_{ev}}(\lambda)$ of weight $\nu$. Since the characteristic of the field $F$ is zero, 
the $G_{ev}$-module $H^0_{G_{ev}}(\lambda)$ is irreducible, and its only nonzero primitive vectors are of the weight $\lambda$. Therefore, $w_{K|L}=0$ for 
each admissible $(K|L)$ with $\cont(K|L)=\mu-\nu$, which implies $w_\nu=0$ and contradicts the assumption that $\nu\neq \lambda$ is a leading weight of $w$.

Finally, let $w$ be an even-primitive vector of weight $\mu$ in $F_k$ and write 
$w_{\lambda}= \sum_{(K|L) \admissible } w_{K|L} \overline{y}_{K|L}$. Then $w-\sum_{(K|L) \admissible} w_{K|L} \overline{\pi}_{K|L}$ is an even-primitive vector in $F_k$, which does not have $\lambda$ as its leading weight. By the above argument, we obtain $w=\sum_{(K|L) \admissible} w_{K|L} \overline{\pi}_{K|L}$ showing that the vectors $\overline{\pi}_{K|L}$ for $(K|L)$ admissible and weight $\mu$ form a basis of all even-primitive vectors of weight $\mu$ in $F_k$.

The proof that every even-primitive vector in $T_k$ of weight $\lambda_{I|J}$ is a linear combination of vectors $\pi_{K|L}$, where $\cont(K|L)=\cont(I|J)$, is analogous.
\end{proof}

Note that if all entries in $I$ are distinct, all entries in $J$ are distinct, and $\lambda$ is $(I|J)$-robust, then the dimension of even-primitive vectors of weight $\lambda_{I|J}$ is $k!$.  

In the general case, even-primitive vectors of weight $\lambda_{I|J}$ in $H^0_G(\lambda)$ are 
certain linear combinations of elements $\overline{\pi}_{K|L}$ with $\cont(K|L)=\cont(I|J)$. 

\begin{pr}\label{genbasis} 
Every even-primitive vector $w$ in $T_k$ of weight $\lambda_{I|J}$ is a linear combination of vectors
$\pi_{K|L}$, where each $\cont(K|L)=\cont(I|J)$. Additionally, every even-primitive vector $w$ in $F_k$ of weight $\lambda_{I|J}$ is a linear combination of vectors
$\overline{\pi}_{K|L}$, where each $(K|L)$ is admissible and $\cont(K|L)=\cont(I|J)$.
\end{pr}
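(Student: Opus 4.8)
The plan is to reuse the proof of Proposition~\ref{basis}, keeping only the part of it that establishes spanning: the non-robust case is precisely the situation in which the vectors $\overline{\pi}_{K|L}$ (resp.\ $\pi_{K|L}$) may fail to be linearly independent, or some may vanish, so Proposition~\ref{basis} cannot be quoted directly; but the subtraction argument contained in its proof uses nothing but the following fact, which does not involve robustness.

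\emph{Claim.} If $w\neq 0$ is an even-primitive vector of weight $\la_{I|J}$ in $F_k$ (resp.\ in $T_k$), then $\la$ is a leading weight of $w$. To prove this I would argue exactly as in Proposition~\ref{basis}: assuming $\nu\neq\la$ is a leading weight, write $w_\nu=\sum w_{K|L}\otimes\overline{y}_{K|L}$ over admissible $(K|L)$ with $cont(K|L)=\la_{I|J}-\nu$ and $w_{K|L}\in H^0_{G_{ev}}(\la)$ of weight $\nu$; apply an arbitrary even superderivation $_{ij}D$, and use $(w)_{ij}D=0$ together with the explicit action of $_{ij}D$ on $H^0_G(\la)$ from Section~2 of \cite{fm-p1} to see that the component of $(w)_{ij}D$ of exterior-monomial type $(K|L)$ at $H^0_{G_{ev}}(\la)$-weight $\nu$ equals $(w_{K|L})_{ij}D\otimes\overline{y}_{K|L}$ (there is nothing above $\nu$, since $\nu$ is leading, and the Leibniz terms in which $_{ij}D$ differentiates an $\overline{y}_{K|L}$ land at other monomial types or strictly lower weights). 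Since the admissible $\overline{y}_{K|L}$ are linearly independent in $\wedge^k Y$, this forces $(w_{K|L})_{ij}D=0$ for every $_{ij}D$, so each $w_{K|L}$ is an even-primitive vector of $H^0_{G_{ev}}(\la)$ of weight $\nu$. As $\operatorname{char}F=0$, the module $H^0_{G_{ev}}(\la)$ is irreducible with primitive vectors only in weight $\la\neq\nu$, hence $w_{K|L}=0$ and $w_\nu=0$, a contradiction. The $T_k$ version is identical with $\otimes$ for $\wedge$, with ``admissible'' deleted, using that the monomials $y_{K|L}$ form a basis of $Y^{\otimes k}$.

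Granting the Claim, the proof finishes as in Proposition~\ref{basis}. Expand the leading component of $w$ as $w_\la=\sum_{(K|L)} c_{K|L}\,v\otimes\overline{y}_{K|L}$, where the sum runs over admissible $(K|L)$, necessarily with $cont(K|L)=cont(I|J)$ by the weight constraint, and $c_{K|L}\in F$ because the $\la$-weight space of $H^0_{G_{ev}}(\la)$ is spanned by $v$. Each $\overline{\pi}_{K|L}$ is even-primitive with $(\overline{\pi}_{K|L})_\la=v\overline{y}_{K|L}$ by Lemma~4.1 of \cite{fm-p1}, which imposes no robustness hypothesis, so $w-\sum_{(K|L)} c_{K|L}\,\overline{\pi}_{K|L}$ is an even-primitive vector of weight $\la_{I|J}$ with vanishing $\la$-component; by the Claim it is $0$, i.e.\ $w=\sum_{(K|L)} c_{K|L}\,\overline{\pi}_{K|L}$. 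The assertion for $T_k$ is obtained in the same way with $\pi_{K|L}$ and $y_{K|L}$ in place of $\overline{\pi}_{K|L}$ and $\overline{y}_{K|L}$, dropping admissibility.

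The step that needs the most care — and it is the same step already negotiated in Proposition~\ref{basis} — is the graded bookkeeping inside the Claim: checking that applying $_{ij}D$ and projecting onto the relevant (monomial type, weight) piece really does reduce to $(w_{K|L})_{ij}D\otimes\overline{y}_{K|L}$, with no interference from the terms in which $_{ij}D$ acts on the exterior factor. This rests on the precise formulas of \cite{fm-p1}. No genuinely new obstacle appears: the content of Proposition~\ref{genbasis} is just the observation that the spanning half of Proposition~\ref{basis} never used $(I|J)$-robustness, only the irreducibility of $H^0_{G_{ev}}(\la)$ in characteristic zero.
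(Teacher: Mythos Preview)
Your approach coincides with the paper's in spirit, but there is one genuine point you overlook. In the non-robust case the vectors $\overline{\pi}_{K|L}$ need not lie in $F_k$: the denominator $v_{K|L}=v/\prod D^{+}(1,\ldots,i_s)\prod D^{-}(m+1,\ldots,m+j_s-1)$ can fail to be a polynomial precisely when $\la$ is not $(I|J)$-robust. (Robustness is exactly the condition guaranteeing $\pi_{K|L}\in T_k$ and $\overline{\pi}_{K|L}\in F_k$; the failure of linear independence you mention is a secondary symptom.) Consequently the vector $w-\sum c_{K|L}\,\overline{\pi}_{K|L}$ to which you apply the Claim generally lives only in $\overline{Q}_k=K(m|n)\otimes\wedge^k Y$, not in $F_k$, and your Claim as stated --- whose proof uses that the coefficients $w_{K|L}$ sit inside the irreducible $H^0_{G_{ev}}(\la)$ --- does not apply. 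In $K(m|n)$ there are plenty of even-primitive vectors of weight $\neq\la$ (for instance $1/D^{+}(1)$), so one cannot simply replace $F_k$ by $\overline{Q}_k$ in the Claim.

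The paper deals with this by passing to the $G_{ev}$-submodule $M\subset\overline{Q}_k$ generated by $F_k$ together with all the $\overline{\pi}_{K|L}$, and asserting that the leading-weight argument of Proposition~\ref{basis} still goes through for primitive vectors in $M$. Your subtraction argument then takes place entirely inside $M$. To carry out your plan you must at minimum make this enlargement explicit and argue that the $K(m|n)$-coefficients arising from $M$ still have the property that their only even-primitive weight is $\la$; without that step the final ``by the Claim it is $0$'' does not follow.
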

\begin{proof}
Denote by $M$ the $G_{ev}$-submodule of $\overline{Q}_k$ generated by $F_k$  and vectors $\overline{\pi}_{K|L}$, where $\cont(K|L)=\cont(I|J)$.
Assume that $w\in M$ is an even-primitive vector of weight $\lambda_{I|J}$ and write $w=\sum_{\kappa\unlhd \lambda} w_{\kappa}$ as before. The same argument as in Proposition \ref{basis} 
gives that $\lambda$ is the leading weight of $w$. 
If the leading term of $w$ is $w_{\lambda}= \sum_{(K|L) \admissible } c_{K|L} \overline{y}_{K|L}$, then $w-\sum_{(K|L) \admissible} c_{K|L} \overline{\pi}_{K|L}$ is an even-primitive vector in $M$ that does not have $\lambda$ as its leading weight. Therefore we conclude that $w=\sum_{(K|L) \admissible} c_{K|L} \overline{\pi}_{K|L}$. 

The proof for even-primitive vectors in $T_k$ is analogous.
\end{proof}

Theorem 7.1 of \cite{fm-p2} gives an explicit description of the basis of all even-primitive vectors in $\nabla(\lambda)$ for a hook partition $\lambda$, which will be used later. 

\section{Image of vectors $\pi_{I|J}$ under the map $\psi_k$}

The primary purpose of this section is to establish criteria for the strong linkage of weights. 

A weight $\mu$ is called \emph{strongly linked} to a weight $\lambda$ if the simple $G$-supermodule $L_G(\mu)$ appears as a composition factor in the induced $G$-supermodule $H^0_G(\lambda)$.

\subsection{Image $\psi_k(\pi_{I|J})$}
It is important to note that the vectors $\pi_{I|J}$ do not necessarily belong to $V\otimes Y^{\otimes k}$. 
Therefore, we extend the previously defined map $\psi_k$ naturally to a map from $Q_k$ to $\overline{Q}_k$. 
By abuse of notation, we denote this new map by the same symbol $\psi_k:Q_k\to \overline{Q}_k$.

Let $E$ be a linear combination of elements $\overline{\pi}_{K|L}$ for admissible $(K|L)$ of the same content as $(I|J)$.
The vectors $\overline{\pi}_{K|L}$ are linearly independent by Lemma 4.3 of \cite{fm-p1}.
We define $\lead(E)$ to be the leading term of $E$, which is the linear combination of all terms in $E$ that are scalar multiples of expressions of type $v\overline{y}_{M|N}$.

Since the leading terms $v\overline{y}_{K|L}$ of elements $\overline{\pi}_{K|L}$ are by itself linearly independent, to determine the coefficients of $\overline{\pi}_{K|L}$ in $\psi_k(\pi_{I|J})$, it is enough to determine the coefficients of their leading terms $v\overline{y}_{M|N}$. 

We write $\rho_{ij}$ as 
\[\begin{aligned} & D^+(1, \ldots, i-1,i) D^-(m+1, \ldots, \widehat{m+j})y_{i,m+j} \\
&+\sum_{r=i+1}^m D^+(1, \ldots, i-1,r) D^-(m+1, \ldots, \widehat{m+j})y_{r,m+j}\\
&+\sum_{s=1}^{j-1} D^+(1, \ldots, i-1,i) (-1)^{s+j} D^-(m+1, \ldots, \widehat{m+s},\ldots, m+j)y_{i,m+s}\\
&+F_{ij},\end{aligned}\]
where 
\[F_{ij}=\sum_{r=i+1}^m D^+(1, \ldots, i-1,r)\sum_{s=1}^{j-1} (-1)^{s+j} D^-(m+1, \ldots, \widehat{m+s},\ldots, m+j)y_{r,m+s}\]
is a sum of multiples of $y_{r,m+s}$, where $r>i$ and $s<j$.

Then $\pi_{I|J}$ can be written as a sum
\begin{equation}\label{rovno}
\begin{aligned} &v\otimes y_{I|J} +\\
&+\sum_{r_1=i_1+1}^m v\frac{D^+(1, \ldots, i_1-1, r_1)}{D^+(1, \ldots, i_1-1, i_1)} \otimes y_{r_1,m+j_1}\otimes y_{i_2, m+j_2} \otimes \cdots \otimes y_{i_k, m+j_k}\\
&+\ldots\\
&+\sum_{r_k=i_k+1}^m v\frac{D^+(1, \ldots, i_k-1, r_k)}{D^+(1, \ldots, i_k-1, i_k)} \otimes y_{i_1,m+j_1}\otimes \cdots \otimes y_{i_{k-1}, m+j_{k-1}} \otimes y_{r_k, m+j_k}\\
&+\sum_{s_1=1}^{j_1-1} (-1)^{s_1+j_1} v\frac{D^-(m+1, \ldots, \widehat{m+s_1}, \ldots, m+j_1)}{D^-(m+1, \ldots, m+j_1-1)} \\
& \hskip1in \otimes y_{i_1,m+s_1}\otimes y_{i_2, m+j_2} \otimes \cdots \otimes y_{i_k, m+j_k}\\
&+\ldots\\
&+\sum_{s_k=1}^{j_k-1} (-1)^{s_k+j_k} v\frac{D^-(m+1, \ldots, \widehat{m+s_k}, \ldots, m+j_k)}{D^-(m+1, \ldots, m+j_k-1)} \\
& \hskip1in \otimes y_{i_1,m+j_1}\otimes \cdots \otimes y_{i_{k-1}, m+j_{k-1}} 
\otimes y_{i_k, m+s_k}\\
&+F_{I|J},\end{aligned}
\end{equation}
where $F_{I|J}$ is a sum of multiples of various $y_{K|L}$, where at least two entries in $(K|L)$ differ from the corresponding entries in $(I|J)$. 

Next, we compute the leading parts of images under $\psi_k$ of various summands appearing in the equation (\ref{rovno}).
Since the coefficients at $y_{K|L}$ appearing in $F_{I|J}$ contain a product of at least two different expressions $D^-(m+1, \ldots, m+l-1)$ in their denominators, we infer that
\[\lead(\psi_k(F_{I|J}))=0.\] 

Since 
\[\begin{aligned}\psi_k(v\otimes y_{I|J})=&(-1)^{k-1}(v)_{i_k,m+j_k}D \otimes y_{i_1,m+j_1}\wedge \cdots \wedge y_{i_{k-1},m+j_{k-1}}\\
&+(-1)^{k-2} v\otimes (y_{i_1,m+j_1})_{i_k,m+j_k}D \wedge \cdots \wedge y_{i_{k-1},m+j_{k-1}}\\
&+(-1)^{k-3} v\otimes y_{i_1,m+j_1}\wedge (y_{i_2,m+j_2})_{i_k,m+j_k}D \wedge \cdots \wedge y_{i_{k-1},m+j_{k-1}}+\ldots \\
&+v\otimes y_{i_1,m+j_1} \wedge \cdots \wedge (y_{i_{k-1},m+j_{k-1}})_{i_k,m+j_k}D,
\end{aligned}\] 
using Corollary 2.20 and Lemma 2.1 of \cite{fm-p1} we compute 
\[\begin{aligned}\lead(\psi_k(v\otimes y_{I|J}))=&(\lambda^+_{i_k}+\lambda^-_{j_k}) v\otimes y_{i_1,m+j_1}\wedge \cdots \wedge y_{i_{k-1},m+j_{k-1}}\wedge y_{i_k, m+j_k}\\
&+(-1)^{k-2} v\otimes (y_{i_1,m+j_k}\wedge y_{i_k,m+j_1}) \wedge \cdots \wedge y_{i_{k-1},m+j_{k-1}}\\
&+(-1)^{k-3} v \otimes y_{i_1,m+j_1}\wedge (y_{i_2,m+j_k}\wedge y_{i_k,m+j_2})\wedge\ldots \wedge y_{i_{k-1},m+j_{k-1}} \\
&+\ldots +v\otimes y_{i_1,m+j_1} \wedge \cdots \wedge (y_{i_{k-1},m+j_k} \wedge y_{i_k,m+j_{k-1}})\\
=&(\lambda^+_{i_k}+\lambda^-_{j_k}) v \otimes \overline{y}_{I|J}+v\otimes \overline{y}_{I|j_k j_2 \cdots j_{k-1} j_1} +v\otimes\overline{y}_{I|j_1j_kj_3\ldots j_{k-1}j_2}\\
&+\ldots +v\otimes \overline{y}_{I|j_1\cdots j_{k-2}j_k j_{k-1}}.
\end{aligned}\]

We have 
\[\begin{aligned}&\lead(\psi_k(\sum_{r_1=i_1+1}^m v\frac{D^+(1, \ldots, i_1-1, r_1)}{D^+(1, \ldots, i_1-1, i_1)} \otimes y_{r_1,m+j_1}\otimes y_{i_2, m+j_2} 
\otimes \cdots \otimes y_{i_k, m+j_k}))\\
&=\lead((-1)^{k-1}\sum_{r_1=i_1+1}^m (v\frac{D^+(1, \ldots, i_1-1, r_1)}{D^+(1, \ldots, i_1-1, i_1)})_{i_k,m+j_k}D \otimes \\
&\hskip1.5in y_{r_1,m+j_1}\wedge y_{i_2, m+j_2} 
\wedge \cdots \wedge y_{i_{k-1}, m+j_{k-1}}).\end{aligned}\]

Using Corollary 2.20 and Lemma 2.10 of \cite{fm-p1}, we argue that the terms contributing to the leading part of the last expression correspond to 
\[D^+(1, \ldots, i_1-1, r_1)_{i_k,m+j_k}D= \sum_{a=1}^m D^+(1, \ldots, i_1-1, a)y_{a,m+j_k},\]
provided $i_k=r_1$ (which implies $i_k>i_1$), and the only summand contributing to the leading part corresponds to $a=i_1$. 
Therefore the last expression equals
\[\begin{aligned} 
&(-1)^{k-1} \delta_{i_k>i_1} v \otimes y_{i_1,m+j_k}\wedge y_{i_k,m+j_1}\wedge y_{i_2,m+j_2} \wedge \cdots \wedge y_{i_{k-1},m+j_{k-1}} \\
&=- \delta_{i_k>i_1} v \otimes \overline{y}_{I|j_k j_2 \cdots j_{k-1} j_1},
\end{aligned}\]
where $\delta_{i_k>i_1}=1 $ if $i_k>i_1$, and vanishes otherwise.

Analogous formulae are valid for the summands corresponding to sums involving $r_2$ up to $r_{k-1}$ and the last one is 
\[\begin{aligned}&\lead(\psi_k(\sum_{r_{k-1}=i_{k-1}+1}^m v\frac{D^+(1, \ldots, i_{k-1}-1, r_{k-1})}{D^+(1, \ldots, i_{k-1}-1, i_{k-1})} \\
&\hskip1.3in \otimes y_{i_1,m+j_1}\otimes \cdots \otimes y_{r_{k-1}, m+j_{k-1}} \otimes y_{i_k, m+j_k}))\\
&=-\delta_{i_k>i_{k-1}} v \otimes \overline{y}_{I|j_1\cdots j_{k-2}j_kj_{k-1}}.
\end{aligned}\]

The following sum corresponding to $r_k$ behaves differently. Using Lemma 2.10 of \cite{fm-p1} again, we get 
\[\begin{aligned}&\lead(\psi_k(\sum_{r_k=i_k+1}^m v\frac{D^+(1, \ldots, i_k-1, r_k)}{D^+(1, \ldots, i_k-1, i_k)} \\
&\hskip1in \otimes y_{i_1,m+j_1}\otimes \cdots \otimes y_{i_{k-1}, m+j_{k-1}} \otimes y_{r_k, m+j_k}))\\
&=\lead((-1)^{k-1}\sum_{r_k=i_k+1}^m (v\frac{D^+(1, \ldots, i_k-1, r_k)}{D^+(1, \ldots, i_k-1, i_k)})_{r_k,m+j_k}D \otimes \\
&\hskip1.5in y_{i_1,m+j_1}\wedge y_{i_2, m+j_2} \wedge \cdots \wedge y_{i_{k-1}, m+j_{k-1}})\\
&=(-1)^{k-1} (m-i_k) v \otimes y_{i_k,m+j_k}\wedge y_{i_1,m+j_1}\wedge y_{i_2,m+j_2} \wedge \cdots \wedge y_{i_{k-1},m+j_{k-1}}\\
&=(m-i_k) v \otimes \overline{y}_{I|J}.
\end{aligned}\]

We have
\[\begin{aligned}&\lead(\psi_k(\sum_{s_1=1}^{j_1-1} (-1)^{s_1+j_1} v\frac{D^-(m+1, \ldots, \widehat{m+s_1}, \ldots, m+j_1)}{D^-(m+1, \ldots, m+j_1-1)} \\
&\otimes y_{i_1,m+s_1}\otimes y_{i_2, m+j_2} \otimes \cdots \otimes y_{i_k, m+j_k}))\\
&=\lead(\sum_{s_1=1}^{j_1-1} (-1)^{s_1+j_1+k-1} (v\frac{D^-(m+1, \ldots, \widehat{m+s_1}, \ldots, m+j_1)}{D^-(m+1, \ldots, m+j_1-1)})_{i_k,m+j_k}D \\
&\otimes y_{i_1,m+s_1}\wedge y_{i_2, m+j_2} \wedge \cdots \wedge y_{i_{k-1}, m+j_{k-1}}).\end{aligned}\]

Using Corollary 2.20 and Lemma 2.13 of \cite{fm-p1}, we argue that the terms contributing to the leading part of the last expression correspond to 
\[\begin{aligned}&D^-(m+1, \ldots, \widehat{m+s_1}, \ldots, m+j_1)_{i_k,m+j_k}D= \\
&D^-(m+j_k, m+2, \ldots, \widehat{m+s_1}, \ldots, m+j_1)y_{i_k,m+1}\\
&+ D^-(m+1,m+j_k, \ldots, \widehat{m+s_1}, \ldots, m+j_1)y_{i_k,m+2}\\
&+\ldots \\
&+D^-(m+1, \ldots, \widehat{m+s_1}, \ldots, m+j_1-1, m+j_k)y_{i_k, m+j_1}\end{aligned}\]
provided $j_k=s_1$ (which implies $j_k<j_1$), and the only term contributing to the leading part corresponds to the last summand
\[D^-(m+1, \ldots, \widehat{m+j_k}, \ldots, m+j_1-1, m+j_k)y_{i_k, m+j_1}.\]
Since 
\[\begin{aligned}&D^-(m+1, \ldots, \widehat{m+j_k}, \ldots, m+j_1-1, m+j_k)\\
&=(-1)^{j_1+j_k-1} D^-(m+1, \ldots, m+j_1-1)\end{aligned},\]
we infer that 
\[\begin{aligned}&\lead(\psi_k(\sum_{s_1=1}^{j_1-1} (-1)^{s_1+j_1} v\frac{D^-(m+1, \ldots, \widehat{m+s_1}, \ldots, m+j_1)}{D^-(m+1, \ldots, m+j_1-1)} \\
&\otimes y_{i_1,m+s_1}\otimes y_{i_2, m+j_2} \otimes \cdots \otimes y_{i_k, m+j_k}))\\
&=(-1)^{k}  \delta_{j_k<j_1} v \otimes y_{i_k, m+j_1}\wedge y_{i_1,m+j_k}\wedge y_{i_2, m+j_2} \wedge \cdots \wedge y_{i_{k-1}, m+j_{k-1}}\\
&=- \delta_{j_k<j_1} v \otimes \overline{y}_{I|j_k j_2 \ldots j_{k-1} j_1}.
\end{aligned}\]

Analogous formulae are valid for the summands corresponding to sums involving $s_2$ up to $s_{k-1}$ and the last one is 
\[\begin{aligned}&\lead(\psi_k(\sum_{s_{k-1}=1}^{j_{k-1}-1} (-1)^{s_{k-1}+j_{k-1}} v\frac{D^-(m+1, \ldots, \widehat{m+s_{k-1}}, m+j_{k-1})}{D^-(m+1, \ldots, m+j_{k-1}-1)}\\
&\otimes y_{i_1,m+j_1}\otimes \cdots \otimes y_{i_{k-1}, m+s_{k-1}} \otimes y_{i_k, m+j_k}))\\
&=- \delta_{j_k<j_{k-1}} v \otimes \overline{y}_{I|j_1\cdots j_{k-2}j_kj_{k-1}}.
\end{aligned}\]

The last sum corresponding to $s_k$ behaves differently. We have 
\[\begin{aligned}&\lead(\psi_k(\sum_{s_{k}=1}^{j_{k}-1} (-1)^{s_{k}+j_{k}} v\frac{D^-(m+1, \ldots, \widehat{m+s_{k}}, \ldots, m+j_{k})}{D^-(m+1, \ldots, m+j_{k}-1)}\\
&\otimes y_{i_1,m+j_1}\otimes \cdots \otimes y_{i_{k-1}, m+j_{k-1}} \otimes y_{i_k, m+s_k}))\\
&=\lead(\sum_{s_k=1}^{j_k-1} (-1)^{s_k+j_k+k-1} (v\frac{D^-(m+1, \ldots, \widehat{m+s_k}, \ldots, m+j_k)}{D^-(m+1, \ldots, m+j_k-1)})_{i_k,m+s_k}D \\
&\otimes y_{i_1,m+j_1}\wedge y_{i_2, m+j_2} \wedge \cdots \wedge y_{i_{k-1}, m+j_{k-1}}).\end{aligned}\]

Using Corollary 2.20 and Lemma 2.13 of \cite{fm-p1}, we derive that the terms contributing to the leading part of the last expression correspond to 

\[\begin{aligned}&D^-(m+1, \ldots, \widehat{m+s_k}, \ldots, m+j_k)_{i_k,m+s_k}D= \\
&D^-(m+s_k, m+2, \ldots, \widehat{m+s_k}, \ldots, m+j_k)y_{i_k,m+1}\\
&+ D^-(m+1,m+s_k, \ldots, \widehat{m+s_k}, \ldots, m+j_k)y_{i_k,m+2}\\
&+\ldots \\
&+D^-(m+1, \ldots, \widehat{m+s_k}, \ldots, m+j_k-1, m+s_k){y_{i_k, m+j_k}}\end{aligned}\]
for each $s_k$, and the only term contributing to the leading part corresponds to the last summand
\[D^-(m+1, \ldots, \widehat{m+s_k}, \ldots, m+j_k-1, m+s_k)y_{i_k, m+j_k}.\]
Since 
\[\begin{aligned}&D^-(m+1, \ldots, \widehat{m+s_k}, \ldots, m+j_k-1, m+s_k)\\
&=(-1)^{s_k+j_k-1} D^-(m+1, \ldots, m+j_k-1),\end{aligned}\]
we infer that 
\[\begin{aligned}&\lead(\psi_k(\sum_{s_{k}=1}^{j_{k}-1} (-1)^{s_{k}+j_{k}} v\frac{D^-(m+1, \ldots, \widehat{m+s_{k}}, \ldots, m+j_{k})}{D^-(m+1, \ldots, m+j_{k}-1)}\\
&\otimes y_{i_1,m+j_1}\otimes \cdots \otimes y_{i_{k-1}, m+j_{k-1}} \otimes y_{i_k, m+s_k}))\\
&=(-1)^{k} (j_k-1) v \otimes y_{i_k, m+j_k}\wedge y_{i_1,m+j_1}\wedge y_{i_2, m+j_2} \wedge \cdots \wedge y_{i_{k-1}, m+j_{k-1}}\\
&= - (j_k-1) v \otimes \overline{y}_{I|J}.
\end{aligned}\]

Recall that Definition 1.1 of \cite{fm-p1} defines the expression $\omega_{ij}$ as 
\begin{equation}\label{2}
\omega_{ij}=\omega_{ij}(\lambda)=\lambda^+_i+\lambda^-_j+m+1-i-j.
\end{equation}
Recalling the definition of $\pi_{I|J}$ given in (\ref{piIJ}), we are ready to state the following proposition.

\begin{pr}\label{tool}
Let $(I|J)$ be admissible of length $k$. Then 
\[\begin{aligned}\psi_k(\pi_{I|J})= &\omega_{i_kj_k} \overline{\pi}_{I|J} + (1-\delta_{i_k>i_1} -\delta_{j_k<j_1}) \overline{\pi}_{I|j_kj_2\cdots j_{k-1}j_1} \\
&+(1-\delta_{i_k>i_2} -\delta_{j_k<j_2}) \overline{\pi}_{I|j_1j_k\cdots j_{k-1}j_2} + \ldots \\
&+(1-\delta_{i_k>i_{k-1}} -\delta_{j_k<j_{k-1}}) \overline{\pi}_{I|j_1\cdots j_k j_{k-1}}.\end{aligned}\]
\end{pr}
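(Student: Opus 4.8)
The plan is to assemble $\psi_k(\pi_{I|J})$ from the leading-term computations already carried out for each summand in the expansion \eqref{rovno}, and then to upgrade the resulting identity between leading terms to a genuine identity between $G_{ev}$-primitive vectors. First I would recall that $\psi_k$ is $G_{ev}$-linear (Lemma \ref{evmorph}, extended to $Q_k\to\overline Q_k$) and that $\pi_{I|J}$ is a $G_{ev}$-primitive vector of weight $\la_{I|J}$ by \cite{fm-p1}; hence $\psi_k(\pi_{I|J})$ is again a $G_{ev}$-primitive vector of weight $\la_{I|J}$ in $F_k$. By Proposition \ref{genbasis}, $\psi_k(\pi_{I|J})$ is therefore a linear combination of the vectors $\overline\pi_{K|L}$ with $(K|L)$ admissible and $cont(K|L)=cont(I|J)$. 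Since the leading terms $v\overline y_{M|N}$ of these $\overline\pi_{K|L}$ are linearly independent (Lemma 4.3 of \cite{fm-p1}, as recalled before Proposition \ref{tool}), the coefficients in this linear combination are uniquely determined by $lead(\psi_k(\pi_{I|J}))$; so it suffices to compute $lead(\psi_k(\pi_{I|J}))$ and read off the coefficients.

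Next I would collect the nine groups of leading-term computations performed above. Writing $\pi_{I|J}$ as in \eqref{rovno}, applying $\psi_k$ termwise and taking $lead$ (which is linear), one has: from $v\otimes y_{I|J}$ the contribution $(\la^+_{i_k}+\la^-_{j_k})\,v\otimes\overline y_{I|J}+\sum_{l=1}^{k-1} v\otimes\overline y_{I|j_1\cdots j_k\cdots j_l}$ (the sum over transpositions of $j_l$ with $j_k$); from each $r_l$-sum with $l<k$ the contribution $-\delta_{i_k>i_l}\,v\otimes\overline y_{I|j_1\cdots j_k\cdots j_l}$; from the $r_k$-sum the contribution $(m-i_k)\,v\otimes\overline y_{I|J}$; from each $s_l$-sum with $l<k$ the contribution $-\delta_{j_k<j_l}\,v\otimes\overline y_{I|j_1\cdots j_k\cdots j_l}$; from the $s_k$-sum the contribution $-(j_k-1)\,v\otimes\overline y_{I|J}$; and $lead(\psi_k(F_{I|J}))=0$. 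Summing, the coefficient of $v\otimes\overline y_{I|J}$ is $(\la^+_{i_k}+\la^-_{j_k})+(m-i_k)-(j_k-1)=\la^+_{i_k}+\la^-_{j_k}+m+1-i_k-j_k=\omega_{i_kj_k}$ by \eqref{2}, and for each $l<k$ the coefficient of $v\otimes\overline y_{I|j_1\cdots j_k\cdots j_l}$ is $1-\delta_{i_k>i_l}-\delta_{j_k<j_l}$. Thus $lead(\psi_k(\pi_{I|J}))$ equals $lead$ of the right-hand side claimed in Proposition \ref{tool}, and by the uniqueness just noted the two $G_{ev}$-primitive vectors coincide.

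One bookkeeping point needs care: the vectors $\overline y_{I|j_1\cdots j_k\cdots j_l}$ indexing the right-hand side need not be admissible, so to invoke the basis/uniqueness argument cleanly I would either rewrite each $\overline\pi_{I|j_1\cdots j_k\cdots j_l}$ in terms of admissible $\overline\pi_{K|L}$ (a sign, or $0$ if two entries collide) and match coefficients there, or — more simply — observe that the displayed identity is an identity among the (not necessarily admissible) vectors $\overline\pi_{K|L}$ whose leading terms $v\overline y_{K|L}$ are still linearly independent as written, so the comparison of leading terms is legitimate verbatim. I expect the main obstacle to be precisely this sign/ordering bookkeeping — tracking the $(-1)^{k-1}$, $(-1)^{k-l-1}$, and $(-1)^{s+j}$ factors through the passage from $\otimes$ to $\wedge$ and confirming that the transposition of $j_l$ and $j_k$ in the wedge produces exactly the index string $j_1\cdots j_k\cdots j_l$ with coefficient $+1$ in $lead(\psi_k(v\otimes y_{I|J}))$, against which the $-\delta_{i_k>i_l}$ and $-\delta_{j_k<j_l}$ corrections are subtracted. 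Everything else is a direct substitution into the already-established formulas, together with the identity $\omega_{i_kj_k}=(\la^+_{i_k}+\la^-_{j_k})+(m-i_k)-(j_k-1)$.
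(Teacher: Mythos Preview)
Your proposal is correct and follows essentially the same approach as the paper: compute $lead(\psi_k(\pi_{I|J}))$ by summing the contributions already worked out for each summand of \eqref{rovno}, then invoke that $\psi_k$ is a $G_{ev}$-morphism (Lemma \ref{evmorph}) and $\pi_{I|J}$ is $G_{ev}$-primitive, so $\psi_k(\pi_{I|J})$ is primitive and by Proposition \ref{genbasis} is a linear combination of $\overline{\pi}_{M|N}$ whose coefficients are read off from the leading terms. Your explicit bookkeeping remark about the $\overline{\pi}_{I|j_1\cdots j_k\cdots j_l}$ not being written in admissible form is a fair point that the paper glosses over, but as you note it causes no real difficulty.
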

\begin{proof}
We have computed earlier that 
\[\begin{aligned}\lead(\psi_k(\pi_{I|J}))=&(\lambda^+_{i_k}+\lambda^-_{j_k}+m-i_k-j_k+1) v\otimes \overline{y}_{I|J} \\
&+(1-\delta_{i_k>i_1} -\delta_{j_k<j_1}) v \otimes \overline{y}_{I|j_kj_2\cdots j_{k-1}j_1}\\
&+(1-\delta_{i_k>i_2} -\delta_{j_k<j_2}) v \otimes \overline{y}_{I|j_1j_k\cdots j_{k-1}j_2} + \ldots \\
&+(1-\delta_{i_k>i_{k-1}} -\delta_{j_k<j_{k-1}}) v\otimes \overline{y}_{I|j_1\cdots j_k j_{k-1}}.\end{aligned}\]

Since $\pi_{I|J}$ is a $G_{ev}$-primitive vector and $\psi_k$ is a $G_{ev}$-morphism, we infer that $\psi_k(\pi_{I|J})$ is a $G_{ev}$-primitive vector. By Proposition \ref{genbasis}, it is a linear combination of vectors $\overline{\pi}_{M|N}$ for admissible $(M|N)$ such that $\cont(M|N)=\cont(I|J)$. Since the leading part of $\psi_k(\pi_{I|J})$ is the same linear combination of leading parts $v\otimes \overline{y}_{M|N}$ of $\overline{\pi}_{M|N}$, the statement follows from the above description of $\lead(\psi_k(\pi_{I|J}))$.
\end{proof}

As a consequence of the proposition, the map $\psi_k:Q_k\to \overline{Q}_k$ restricts to $\psi_k:S_k\to \overline{S}_k$, 
where $S_k$ is a span of all vectors $\pi_{I|J}$ for admissible $(I|J)$ of length $k$ and 
$\overline{S}_k$ is a span of all vectors $\overline{\pi}_{I|J}$ for admissible $(I|J)$ of length $k$. 

We illustrate this proposition on the following example that will be used later.

\begin{ex}\label{ex1}
Let $m=n=3$ and a weight $\lambda$ is such that $\lambda^+_1>\lambda^+_2>\lambda^+_3$ and $\lambda^-_1>\lambda^-_2>\lambda^-_3$. Assume that $I=J=(123)$, hence $\lambda$ is $(I|J)$-robust. 
Then 
\[\mathfrak{B}=\{\vec{b}_1=\overline{\pi}_{123,123},\vec{b}_2=\overline{\pi}_{123,213},\vec{b}_3=\overline{\pi}_{123,132},\vec{b}_4=\overline{\pi}_{123,312},
\vec{b}_5=\overline{\pi}_{123,231},\vec{b}_6=\overline{\pi}_{123,321}\}\]
is a basis of all even-primitive vectors of weight $\lambda_{I|J}$ in $F_k$.
The set $\mathfrak{A}=$
\[\begin{aligned}\{&\vec{a}_1=\pi_{123,123}, \vec{a}_2=\pi_{123,213},\vec{a}_3=\pi_{123,132},\vec{a}_4=\pi_{123,312},\vec{a}_5=\pi_{123,231},\vec{a}_6=\pi_{123,321},\\
&\vec{a}_7=\pi_{213,213}, \vec{a}_8=\pi_{213,123},\vec{a}_9=\pi_{213,312},\vec{a}_{10}=\pi_{213,132},\vec{a}_{11}=\pi_{213,321},\vec{a}_{12}=\pi_{213,231}, \\
&\vec{a}_{13}=\pi_{132,132}, \vec{a}_{14}=\pi_{132,231},\vec{a}_{15}=\pi_{132,123},\vec{a}_{16}=\pi_{132,321},\vec{a}_{17}=\pi_{132,213},\vec{a}_{18}=\pi_{132,312}, \\
&\vec{a}_{19}=\pi_{312,312}, \vec{a}_{20}=\pi_{312,321},\vec{a}_{21}=\pi_{312,213},\vec{a}_{22}=\pi_{312,231},\vec{a}_{23}=\pi_{312,123},\vec{a}_{24}=\pi_{312,132}, \\
&\vec{a}_{25}=\pi_{231,231}, \vec{a}_{26}=\pi_{231,132},\vec{a}_{27}=\pi_{231,321},\vec{a}_{28}=\pi_{231,123},\vec{a}_{29}=\pi_{231,312},\vec{a}_{30}=\pi_{231,213}, \\
&\vec{a}_{31}=\pi_{321,321}, \vec{a}_{32}=\pi_{321,312},\vec{a}_{33}=\pi_{321,231},\vec{a}_{34}=\pi_{321,213},\vec{a}_{35}=\pi_{321,132},\vec{a}_{36}=\pi_{321,123}\}
\end{aligned}\]
is linearly independent and spans the space $A$ of even-primitive vectors of weight $\lambda_{I|J}$ in $T_k$.
For an explanation how was the set $\mathfrak{A}$ ordered see Lemma \ref{surj} and Example \ref{ex2} later.

The matrix of $\psi_k$ with respect to the bases $\mathfrak{A}$ and $\mathfrak{B}$ is given as
\[\left(\begin{matrix}
\omega_{33}&0&-1&0&0&-1&-\omega_{33}&0&1&0&0&1\\
0&\omega_{33}&0&-1&-1&0&0&-\omega_{33}&0&1&1&0\\
0&0&\omega_{32}&0&-1&0&0&0&-\omega_{32}&0&1&0\\
0&0&0&\omega_{32}&0&-1&0&0&0&-\omega_{32}&0&1\\
0&0&0&0&\omega_{31}&0&0&0&0&0&-\omega_{31}&0\\
0&0&0&0&0&\omega_{31}&0&0&0&0&0&-\omega_{31}
\end{matrix}\right.\]

\[\begin{matrix} 
-\omega_{22}&1&-1&0&0&0&\omega_{22}&-1&1&0&0&0\\
0&-\omega_{21}&0&0&-1&0&0&\omega_{21}&0&0&1&0\\
0&0&-\omega_{23}&1&0&0&0&0&\omega_{23}&-1&0&0\\
0&0&0&-\omega_{22}&0&-1&0&0&0&\omega_{22}&0&1\\
0&0&0&0&-\omega_{23}&1&0&0&0&0&\omega_{23}&-1\\
0&0&0&0&0&-\omega_{22}&0&0&0&0&0&\omega_{22}
\end{matrix}\]

\[\left.\begin{matrix}
\omega_{11}&1&0&0&0&1&-\omega_{11}&-1&0&0&0&-1\\
0&\omega_{12}&0&1&0&0&0&-\omega_{12}&0&-1&0&0\\
0&0&\omega_{11}&1&1&0&0&0&-\omega_{11}&-1&-1&0\\
0&0&0&\omega_{13}&0&0&0&0&0&-\omega_{13}&0&0\\
0&0&0&0&\omega_{12}&1&0&0&0&0&-\omega_{12}&-1\\
0&0&0&0&0&\omega_{13}&0&0&0&0&0&-\omega_{13}
\end{matrix}\right).\]

Here $\omega_{33}=\lambda^+_3+\lambda_3^- -2$, $\omega_{32}=\lambda^+_3+\lambda_2^- -1$, $\omega_{31}=\lambda^+_3+\lambda_1^-$, 
$\omega_{23}=\lambda^+_2+\lambda_3^- -1$, $\omega_{22}=\lambda^+_2+\lambda_2^- $, $\omega_{21}=\lambda^+_2+\lambda_1^- +1$, 
$\omega_{13}=\lambda^+_1+\lambda_3^-$, $\omega_{12}=\lambda^+_1+\lambda_2^- +1$ and $\omega_{11}=\lambda^+_1+\lambda_1^- +2$. 
\end{ex}

\subsection{Map $\psi_{\mu}$ and the strong linkage}

Assume $(I|J)$ is admissible of length $k$, and denote the weight $\lambda_{I|J}$ by $\mu$. Denote by $S_{\mu}$ or $S_{I|J}$ 
the span of all even-primitive vectors of weight $\mu$ in $T_k$ and 
by $\overline{S}_{\mu}$ or $\overline{S}_{I|J}$ the span of all even-primitive vectors of weight $\mu$ in $F_k$. 
Consider a restriction $\psi_{\mu}$ of the map $\psi_k$ on $S_{\mu}$. By Lemma \ref{evmorph}, $\psi_{\mu}$ maps $S_{\mu}$ to $\overline{S}_{\mu}$.

\begin{pr}\label{comp}
Let $\lambda$ be a dominant weight of $G$, $(I|J)$ be admissible of length $k$ such that $\mu=\lambda_{I|J}$ is dominant. 
Then  $\mu$ is strongly linked to  $\lambda$ if and only if 
the map $\psi_{\mu}: S_{\mu} \to \overline{S}_{\mu}$ is not surjective.
If both $\lambda$ and $\mu$ are polynomial weights, then $L_G(\mu)$ is a composition factor of $\nabla(\lambda)$ if and only if 
the map $\psi_{\mu}: S_{\mu} \to \overline{S}_{\mu}$ is not surjective.
\end{pr}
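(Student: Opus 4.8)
The plan is to translate both sides of the asserted equivalence into multiplicities of $L_{G_{ev}}(\mu)$ inside the floor $F_k$, where $k$ is the common length of $I$ and $J$ (throughout we assume $(I|J)$ is non-empty, i.e.\ $k\ge 1$ and $\mu\ne\la$). Since the characteristic of $F$ is zero, every floor is a completely reducible $G_{ev}$-module, so $\dim\overline{S}_\mu=[\,F_k:L_{G_{ev}}(\mu)\,]$ and $\dim S_\mu=[\,T_k:L_{G_{ev}}(\mu)\,]$. Because $\psi_k$ is a $G_{ev}$-morphism between semisimple modules (Lemma \ref{evmorph}), $\mathrm{Im}\,\psi_k$ is a $G_{ev}$-direct summand of $F_k$ and $\psi_k(S_\mu)$ is exactly the set of even-primitive vectors of weight $\mu$ lying in $\mathrm{Im}\,\psi_k$; hence
\[\dim\overline{S}_\mu-\dim\psi_k(S_\mu)=[\,F_k/\mathrm{Im}\,\psi_k:L_{G_{ev}}(\mu)\,].\]
So $\psi_\mu\colon S_\mu\to\overline{S}_\mu$ is surjective if and only if $[\,F_k/\mathrm{Im}\,\psi_k:L_{G_{ev}}(\mu)\,]=0$.

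Next I would identify $\mathrm{Im}\,\psi_k$. Let $\mathfrak{g}_{-1}$ denote the span of the odd operators $e_{ji}$ with $1\le i\le m<j\le m+n$, and $\mathfrak{g}_1$ the span of the $e_{ij}$ with the same index ranges; by the identification used in the proof of Lemma \ref{evmorph} the elements of $\mathfrak{g}_{-1}$ act on $H^0_G(\la)$ as the superderivations ${}_{ij}D$. For $k\ge 1$ the definition of $\psi_k$ gives
\[\psi_k\bigl(v\otimes y_{i_1j_1}\otimes\cdots\otimes y_{i_kj_k}\bigr)=\bigl(v\wedge y_{i_1j_1}\wedge\cdots\wedge y_{i_{k-1}j_{k-1}}\bigr)\,{}_{i_kj_k}D,\]
and since the vectors $v\wedge\overline{y}_{I'|J'}$ span $F_{k-1}$ we obtain $\mathrm{Im}\,\psi_k=\mathfrak{g}_{-1}\cdot F_{k-1}$. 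Thus $F_k/\mathrm{Im}\,\psi_k$ is the floor-$k$ component of the $\mathfrak{g}_{-1}$-coinvariants of $H^0_G(\la)$, and the proposition reduces to the multiplicity identity
\[[\,(F_k/\mathfrak{g}_{-1}F_{k-1}):L_{G_{ev}}(\mu)\,]_{G_{ev}}=[\,H^0_G(\la):L_G(\mu)\,]_G .\]

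To prove this identity I would proceed from the following structural facts. A weight $\nu$ lies in $F_p$ precisely when $\ell(\nu^-)-\ell(\la^-)=p$, and this quantity is the eigenvalue of an element of the maximal torus; hence every $G$-submodule of $H^0_G(\la)$, and so every subquotient, is the direct sum of its intersections with the floors, and every composition factor $L_G(\nu)$ acquires a well-defined bottom floor $k_\nu$. Using the PBW factorization of $U(Lie(G))$ through $U(\mathfrak{g}_{-1})$, $U(Lie(G_{ev}))$ and $U(\mathfrak{g}_1)$, together with $[\mathfrak{g}_{-1},\mathfrak{g}_{-1}]=0$ and $\mathfrak{g}_1 v_\nu=0$ for a highest weight vector $v_\nu$ of $L_G(\nu)$, one gets $L_G(\nu)\cap F_{k_\nu}\cong L_{G_{ev}}(\nu)$ with multiplicity one and $L_G(\nu)\cap F_p=\mathfrak{g}_{-1}\,(L_G(\nu)\cap F_{p-1})$ for $p>k_\nu$; equivalently, the $\mathfrak{g}_{-1}$-coinvariants of $L_G(\nu)$ equal $L_{G_{ev}}(\nu)$, placed in floor $k_\nu$. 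Running a composition series of $H^0_G(\la)$ through the right-exact coinvariants functor, and using the elementary estimate $[\,f(N):L\,]\ge\sum_i[\,\bar f_i(\mathrm{gr}_iN):L\,]$ (valid for any map $f$ of filtered modules, with $\bar f_i$ the induced maps on the associated graded), gives the inequality $[\,(F_k/\mathfrak{g}_{-1}F_{k-1}):L_{G_{ev}}(\mu)\,]\le[\,H^0_G(\la):L_G(\mu)\,]$; this already yields one implication of the proposition, namely that non-surjectivity of $\psi_\mu$ forces $L_G(\mu)$ to be a composition factor. The reverse inequality is the heart of the matter and the place I expect the main difficulty: one must show that the cancellations in the long exact sequence of $\mathfrak{g}_{-1}$-homology do not reach floor $k$ at the steps contributing a copy of $L_G(\mu)$, using that $H_1(\mathfrak{g}_{-1},L_G(\nu))$ lives in floors strictly above $k_\nu$, and choosing the composition series so that it passes through the largest submodule of $H^0_G(\la)$ having no composition factor isomorphic to $L_G(\mu)$.

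Finally, the statement about $\nabla(\la)$ follows by repeating the argument with $\nabla(\la)$ in place of $H^0_G(\la)$. When $\mu$ is polynomial the entire weight-$\mu$ space of $H^0_G(\la)$, and hence $\overline{S}_\mu$ and $\psi_k(S_\mu)$, lies inside $\nabla(\la)$; moreover $\nabla(\la)$ is again floor-graded, its composition factors have well-defined bottom floors (they are among the $L_G(\la_{K|L})$), and the coinvariants computation is formally identical. Hence $[\,(\nabla(\la)\cap F_k)/\mathfrak{g}_{-1}(\nabla(\la)\cap F_{k-1}):L_{G_{ev}}(\mu)\,]=[\,\nabla(\la):L_G(\mu)\,]$, and the reformulation of the first paragraph concludes the proof.
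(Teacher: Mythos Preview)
Your outline parallels the paper's closely: both reduce to identifying $\mathrm{Im}\,\psi_k$ with the floor-$k$ part of the $G$-submodule $M$ generated by lower floors (the paper phrases this as $M\cap F_k=\psi_k(T_k)$ and proves it via a PBW ordering; you observe directly that $\mathrm{Im}\,\psi_k=\mathfrak{g}_{-1}\cdot F_{k-1}$, which is the same thing). Both then use semisimplicity of $G_{ev}$-modules to pass between ``$G_{ev}$-primitive vector of weight $\mu$ in $F_k\setminus M$'' and non-surjectivity of $\psi_\mu$.

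The substantive difference is in the treatment of the equivalence
\[
[L_G(\mu)\text{ is a composition factor of }H^0_G(\la)]\ \Longleftrightarrow\ [F_k/(M\cap F_k):L_{G_{ev}}(\mu)]>0.
\]
The paper simply \emph{asserts} this as a known structural fact about $H^0_G(\la)$ and moves on. You attempt to prove it: the inequality $[F_k/\mathfrak{g}_{-1}F_{k-1}:L_{G_{ev}}(\mu)]\le[H^0_G(\la):L_G(\mu)]$ via right-exactness of $\mathfrak{g}_{-1}$-coinvariants is correct and gives one implication cleanly. But your sketch for the reverse inequality has a genuine gap. The observation that $H_1(\mathfrak{g}_{-1},L_G(\nu))$ lives in floors strictly above $k_\nu$ only controls the connecting maps at steps where $k_\nu\ge k$. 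For composition factors $L_G(\nu)$ with $k_\nu<k$ the group $H_1(\mathfrak{g}_{-1},L_G(\nu))$ may well meet floor $k$, and nothing in your choice of composition series (passing through the largest $L_G(\mu)$-free submodule $P$) forces those factors to sit below $P$; in general $H^0_G(\la)/P$ can have nonzero components on floors below $k$, so its $H_1$ can interfere with the copy of $L_{G_{ev}}(\mu)$ you are trying to protect.

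A more robust route to the missing direction is to work with the quotient $R=H^0_G(\la)/M$ itself: $R$ has bottom floor $R_k=F_k/(M\cap F_k)$, and the $G$-submodule of $R$ generated by $R_k$ is a quotient of the parabolically induced (Kac) module $\mathrm{Ind}^{\mathfrak g}_{\mathfrak g_0\oplus\mathfrak g_1}R_k=\bigoplus K(\nu_i)$, whose only composition factors with highest weight on floor $k$ are the $L_G(\nu_i)$ themselves. This gives $[R:L_G(\mu)]>0\Rightarrow[R_k:L_{G_{ev}}(\mu)]>0$. One still has to rule out $[M:L_G(\mu)]>0$, which is exactly the point your homological sketch does not reach; the paper does not argue this either, implicitly relying on the known description of composition factors of the induced supermodule. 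Your treatment of the polynomial case is fine and amounts to the content of the cited Corollary~7.2 of \cite{fm-p2}.
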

\begin{proof}
It is well-known that the category of rational $G_{ev}$-modules is semisimple. Its simple objects $L_{G_{ev}}(\mu)$ are indexed by dominant weights $\mu$, and are generated by $G_{ev}$-primitive vectors
$v_{\mu}$.
On the other hand, the supermodule $H^0_G(\lambda)$ need not be semisimple. 

Using Poincare-Birkhoff-Witt theorem, we order generators $e_{ij}$ of the distribution algebra $\Dist(G)$ (universal enveloping algebra of the Lie superalgebra $\mathfrak{gl}(m|n)$)  
in the following way. First we list odd elements $e_{ij}$, where $m+1\leq j \leq m+n$ and $1\leq i \leq m$, followed by even elements $e_{ij}$, and the odd elements $e_{ji}$, where $1\leq i\leq m$ and $m+1\leq j \leq m+n$, come last. 

Consider the filtration 
\[0\subset M_0\subset M_1 \subset \ldots \subset M_{mn}=H^0_G(\lambda)\]
of $H^0_G(\lambda)$ by $G$-supermodules $M_i=\langle F_1, \ldots, F_i\rangle$ generated by floors $F_1, \ldots, F_i$.

The supermodule $L_G(\mu)$ is a composition factor of $H^0_G(\lambda)$ if and only if it is a composition factor of 
$M_{k}/M_{k-1}$. Using the above ordering of generators of $\Dist(G)$, we infer that this happens if and only if there is a $G_{ev}$-primitive vector in $F_k$ of weight $\mu$ that does not belong to the $G$-subsupermodule $M_{k-1}$. 

The map $\psi_k: T_k \to F_k$ factors through the natural projection 
\[\proj:T_k=V\otimes Y^{\otimes k} \to V \otimes (\wedge^{k-1} Y) \otimes Y = F_{k-1}\otimes Y =\tilde{T}_k\] which is a surjective $G_{ev}$-morphism.
Denote by $\proj_{\mu}$ the restriction of $\proj$ to the weight spaces corresponding to $\mu$, 
by $\tilde{S}_{\mu}$ the span of even-primitive vectors of weight $\mu$ in $\tilde{T}_k$ and by $\tilde{\psi}_{\mu}$ the induced map satisfying 
$\psi_{\mu}=\tilde{\psi}_{\mu}\circ proj_{\mu}$. 

From the definitions of the supermodule $M_{k-1}$ and the above factorization of $\psi_k$, it is clear that $\psi_k(T_k)\subseteq M_{k-1}\cap F_k$. To see the reverse inclusion, consider one of the generators $w \in M_{k-1}\cap F_l$ of $M_{k-1}$, where $l<k$. Denote by $W$ the $G$-supermodule generated by $w$. 

Using the ordering of superderivations $_{ij}D$ corresponding to the above ordering of $e_{ji}$, we infer that every $w'\in W\cap F_k$ is a sum of terms of type $u=(w'')_{i_1,j_1}D\ldots _{i_{k-l},j_{k-l}}D$, where 
$w''\in W\cap F_l$, the superderivations $_{i_s,j_s}D$ are odd, and $1\leq i_s\leq m<j_s\leq m+n$ for each 
$s=1, \ldots, k-l$.
Since \[u'=(w'')_{i_1,j_1}D\ldots _{i_{k-l-1},j_{k-l-1}}D\in F_{k-1}\]
and
$u=(u')_{i_{k-l}j_{k-l}}D=\tilde{\psi}_k(u'\otimes y_{i_{k-l},j_{k-l}}) \in \psi_k(T_k)$, we conclude that 
$W\cap F_k \subseteq \psi_k(T_k)$ and hence $M_{k-1}\cap F_k \subseteq \psi_k(T_k)$.  Therefore $M_{k-1}\cap F_k=\psi_k(T_k)$.

By Lemma \ref{evmorph}, the map $\psi_k$ sends $G_{ev}$-primitive vectors of weight $\mu$ in $T_k$ to $G_{ev}$-primitive vectors in $F_k$. 
Therefore, there is a $G_{ev}$-primitive vector of weight $\mu$ in $F_k$ that does not belong to $M_{k-1}$ if and only if the map $\psi_{\mu}:S_{\mu} \to \overline{S}_{\mu}$ is not surjective.

The last statement now follows from Corollary 7.2 of \cite{fm-p2}.
\end{proof}

Explicit elements of $S_{I|J}$, and an explicit basis of $\overline{S}_{I|J}$, for $(I|J)$-robust weights $\lambda$ were given in \cite{fm-p1}, and in the general case in 
\cite{fm-p2}.

Using these and our description of the matrix of $\psi_k$, we determine the necessary condition when 
the map $\psi_{\mu}: S_{\mu} \to \overline{S}_{\mu}$ is not surjective. Using Proposition \ref{comp}, we then obtain a description of simple composition factors of
$H^0_G(\lambda)$ and $\nabla(\lambda)$.
We start first with the case of a robust weight and deal with the general case later.

In what follows, we abuse the notation and denote various restrictions of the map $\psi_k$ (and $\psi_{\mu}$) just by $\psi_k$. We indicate the domain and codomain of the map
$\psi_k$ every time it is required to avoid confusion.

\section{Odd linkage for robust weights}\label{sec4}
In this section, we {\it assume that the weight $\lambda$ is $(I|J)$-robust}. 

\subsection{$\lambda$ is $(I|J)$-robust, entries in $I$ are distinct, and entries in $J$ are distinct}
Assume first that all entries in $I$ are distinct, and all entries in $J$ are distinct. In this case, the condition that $\lambda$ is $(I|J)$-robust is equivalent to the condition that
all entries $\lambda^+_i$ for $i\in I$, and all entries $\lambda^-_j$ for $j\in J$ are distinct.
We will remove the assumption that all entries in $I$ are distinct, and all entries in $J$ are distinct in the next subsection.

Let $I_0=(i_1 < i_2 < \ldots < i_k)$ be a multi-index of the same content as $I$ and $J_0=(j_1 < j_2 < \ldots < j_k)$ be a multi-index of the same content as $J$.
Then $\overline{S}_{I|J}$ is the span of vectors $\overline{\pi}_{I_0|L}$ for all permutations $L$ of $J_0$. 

Denote by $S_J$ the set of all multi-indices $L$ of content $\cont(J)$. We impose the reverse Semitic lexicographic order $<$ on the set $S_J$ . This means that we compare entries of two elements of $S_J$ by reading from right to left and we impose the reverse order on the individual symbols. For example, if $J=(3<2<1)$, then the order $<$ on $S_J$
is $123<213<132<312<231<321$. The order $L_1< L_2 < \ldots < L_{k!}$ on $S_J$ induces the corresponding order $<$ on basis elements $\overline{\pi}_{I_0|L}$ of $\overline{S}_{I|J}$
given as $\overline{\pi}_{I_0|L_1} < \overline{\pi}_{I_0|L_2} < \ldots <\overline{\pi}_{I_0|L_{k!}}$.

For a permutation $\eta\in \Sigma_k$ of the set $\{1, \ldots, k\}$ and $L=(l_1, \ldots, l_k)$, we denote $\eta L=(l_{\eta(1)}, \ldots, l_{\eta(k)})$. 
The vector space $S_{I|J}$ contains a subspace that is a direct sum of spaces $S_{\eta I_0|J}$ for all $\eta\in \Sigma_k$, where each $S_{\eta I_0|J}$ is the spans of all vectors $\pi_{\eta I_0|L}$
for all multi-indices $L$ of content $\cont(J)$. The dimension of $S_{\eta I_0|J}$ is $k!$ and the matrix of $\psi_k:S_{\eta I_0|J}\to \overline{S}_{I|J}$ for bases consisting of vectors $\pi_{\eta I_0|L}$ and $\overline{\pi}_{I_0|L}$, respectively, is 
a square matrix of dimension $k!$.

Related to the order $<$ on $S_{J}$ and the permutation $\eta\in \Sigma_k$, we define an order $<^{\eta}$ on the basis elements $\pi_{\eta I_0|L}$ of $S_{\eta I_0|J}$ as follows.
If $L_1< L_2 \ldots < L_{k!}$ is the listing of the elements of $S_J$ according to the order $<$, then the order $<^\eta$ on the elements $\pi_{\eta I_0|L}$ is given as
$\pi_{\eta I_0|\eta L_1}<^\eta \pi_{\eta I_0|\eta L_2}<^\eta \ldots <^\eta \pi_{\eta I_0|\eta L_{k!}}$. This is compatible with the order $<$ on basis elements $\overline{\pi}_{I_0|L}$ of $\overline{S}_{I|J}$ since $\overline{\pi}_{\eta I_0|\eta L_s}=\pm \overline{\pi}_{I_0|L_s}$.

When we compare various even-primitive vectors in $\overline{Q}_k$ or $\overline{S}_k$, it is useful to mention that $\overline{\pi}_{I_0,L} = (-1)^\eta \overline{\pi}_{\eta I_0,\eta L}$.

Recall the definition of $\omega_{i,j}$ given in (\ref{2}).

\begin{lm}\label{surj}
Assume that all entries in $I$ are distinct, all entries in $J$ are distinct, and $\lambda$ is $(I|J)$-robust. Then the matrix of the map $\psi_k:S_{\eta I_0|J}\to \overline{S}_{I|J}$ for the bases consisting of vectors  $\pi_{\eta I_0|\eta L}$ ordered by $<^\eta$, and $\overline{\pi}_{I_0|L}$ ordered by $<$, is an upper-triangular matrix. Its diagonal entry
corresponding to $\pi_{\eta I_0|\eta L}$ and $\overline{\pi}_{I_0|L}$ equals $(-1)^{\eta}\omega_{i_{\eta(k)}, l_{\eta(k)}}$.
\end{lm}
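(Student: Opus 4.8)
The strategy is to read off the matrix entries directly from Proposition \ref{tool}, which already computes $\psi_k(\pi_{M|N})$ as an explicit linear combination of vectors $\overline{\pi}_{M|N'}$ with the same content, and then to track how the chosen orderings $<^\eta$ on the domain basis and $<$ on the codomain basis interact with this formula. First I would fix $\eta\in\Sigma_k$ and apply Proposition \ref{tool} with $(M|N) = (\eta I_0 \mid \eta L)$ in place of $(I|J)$; writing $\eta I_0 = (i_{\eta(1)},\ldots,i_{\eta(k)})$ and $\eta L = (l_{\eta(1)},\ldots,l_{\eta(k)})$, the formula expresses $\psi_k(\pi_{\eta I_0|\eta L})$ as $\omega_{i_{\eta(k)},l_{\eta(k)}}\,\overline{\pi}_{\eta I_0|\eta L}$ plus a sum of terms $(1-\delta_{i_{\eta(k)}>i_{\eta(a)}}-\delta_{l_{\eta(k)}<l_{\eta(a)}})\,\overline{\pi}_{\eta I_0 \mid (\eta L)'_a}$ for $a=1,\ldots,k-1$, where $(\eta L)'_a$ is $\eta L$ with its last entry $l_{\eta(k)}$ swapped with its $a$-th entry $l_{\eta(a)}$. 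Using $\overline{\pi}_{\eta I_0|\eta N} = \pm\,\overline{\pi}_{I_0|N}$ I would rewrite each term as a scalar times a codomain basis vector $\overline{\pi}_{I_0|N}$ for the appropriate $N$ in the $\Sigma_k$-orbit of $L$.

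\textbf{Main steps.} The diagonal claim is then immediate: the coefficient of $\overline{\pi}_{\eta I_0|\eta L}$ in $\psi_k(\pi_{\eta I_0|\eta L})$ is $\omega_{i_{\eta(k)},l_{\eta(k)}}$, and since $\overline{\pi}_{\eta I_0|\eta L} = (-1)^\eta \overline{\pi}_{I_0|L}$ (the sign being the sign of the permutation $\eta$ acting on the wedge, consistent with the remark "$\overline{\pi}_{\eta I_0|\eta L_s}=\pm\overline{\pi}_{I_0|L_s}$" and the compatibility statement preceding the lemma), the $(\overline{\pi}_{I_0|L},\pi_{\eta I_0|\eta L})$ entry is $(-1)^\eta \omega_{i_{\eta(k)},l_{\eta(k)}}$. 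The substantive point is upper-triangularity: I must show that each off-diagonal term $\overline{\pi}_{\eta I_0\mid(\eta L)'_a}$ that actually appears (i.e. has nonzero coefficient $1-\delta_{i_{\eta(k)}>i_{\eta(a)}}-\delta_{l_{\eta(k)}<l_{\eta(a)}}$) corresponds, after rewriting in the $\overline{\pi}_{I_0|N}$ basis, to an index $N$ with $N > L$ in the reverse Semitic lexicographic order — equivalently, that $\pi_{\eta I_0|\eta N}$ is strictly larger than $\pi_{\eta I_0|\eta L}$ in the order $<^\eta$, which by definition means $N > L$. Here $N$ is the multiindex obtained from $L$ by the transposition that sends $\eta L$ to $(\eta L)'_a$; concretely, if $\eta(k)=p$ and $\eta(a)=q$, then $N$ is $L$ with entries in positions $p$ and $q$ interchanged, and $\eta N$ agrees with $\eta L$ except that its $k$-th and $a$-th entries are swapped. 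So I need: whenever $1-\delta_{i_{\eta(k)}>i_{\eta(a)}}-\delta_{l_{\eta(k)}<l_{\eta(a)}}\neq 0$, we have $N > L$.

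\textbf{The key combinatorial lemma.} The coefficient $1-\delta_{i_{\eta(k)}>i_{\eta(a)}}-\delta_{l_{\eta(k)}<l_{\eta(a)}}$ is nonzero precisely when it is not the case that ($i_{\eta(k)}>i_{\eta(a)}$) and not the case that ($l_{\eta(k)}<l_{\eta(a)}$) — wait, more carefully, it is $1$ when both $\delta$'s vanish and $-1$ when both equal $1$, and $0$ when exactly one equals $1$; in all nonzero cases the two $\delta$'s are equal. Since all entries of $I$ are distinct and all entries of $J$ are distinct, $\delta_{i_{\eta(k)}>i_{\eta(a)}}=1$ means $i_{\eta(k)}>i_{\eta(a)}$ and $\delta_{l_{\eta(k)}<l_{\eta(a)}}=1$ means $l_{\eta(k)}<l_{\eta(a)}$; so a nonzero coefficient forces either ($i_{\eta(k)}<i_{\eta(a)}$ and $l_{\eta(k)}\geq l_{\eta(a)}$, hence $l_{\eta(k)}>l_{\eta(a)}$) or ($i_{\eta(k)}>i_{\eta(a)}$ and $l_{\eta(k)}<l_{\eta(a)}$). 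Passing to $N$: its entry in position $p=\eta(k)$ is $l_{\eta(a)}$ and in position $q=\eta(a)$ is $l_{\eta(k)}$, all other positions unchanged. The reverse Semitic lexicographic comparison of $N$ and $L$ reads positions from right to left with symbols reversed, and the two differ only in positions $p$ and $q$; the rightmost position where they differ is $\max(p,q)$. I would check, in each of the two nonzero cases, that the value $N$ places in position $\max(p,q)$ is larger in the reversed symbol order than the value $L$ places there — using that $I_0$ is increasing so $p<q \iff i_{\eta(k)}<i_{\eta(a)}$ (since $(\eta I_0)$ has $i_{\eta(k)}$ in position $k$... hmm, rather $I_0=(i_1<\cdots<i_k)$ so the symbol $i_{\eta(k)}$ is intrinsically comparable). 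Actually the cleanest route is: when a term survives, the pair of positions $(p,q)$ and the values being swapped are arranged so that swapping moves the configuration \emph{up} in $<$; I expect this to come down to a short case analysis on the sign of $p-q$ versus the inequality between $l$-values, exactly mirroring the robustness-free combinatorics already used in \cite{fm-p1}. This combinatorial verification — matching the vanishing pattern of $1-\delta_{i_{\eta(k)}>i_{\eta(a)}}-\delta_{l_{\eta(k)}<l_{\eta(a)}}$ against the reverse Semitic lexicographic order — is the one genuinely nontrivial step; everything else is bookkeeping with signs of permutations and the definition of $<^\eta$. Once upper-triangularity and the diagonal entries are in hand, the lemma is proved.
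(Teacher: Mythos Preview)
Your overall strategy is exactly the paper's: apply Proposition~\ref{tool} with $(\eta I_0|\eta L)$ in place of $(I|J)$, read off the diagonal coefficient, and then do a two-case analysis on the surviving off-diagonal terms using that $I_0$ is strictly increasing. The identification of the diagonal entry as $(-1)^\eta\omega_{i_{\eta(k)},l_{\eta(k)}}$ via $\overline{\pi}_{\eta I_0|\eta L}=(-1)^\eta\overline{\pi}_{I_0|L}$ is correct and matches the paper.

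However, there is one genuine slip: you have the direction of the inequality backwards. For an upper-triangular matrix with bases listed in increasing order, the image of the $s$-th domain vector must involve only codomain vectors with index $\le s$. So you need to show that every surviving off-diagonal term $\overline{\pi}_{I_0|N}$ satisfies $N<L$ in the reverse Semitic order, not $N>L$ as you wrote. The paper writes $K=\gamma.L$ with $\gamma=(\eta(t)\,\eta(k))$ and shows $K<L$ in both nonvanishing cases. Concretely: if $i_{\eta(k)}>i_{\eta(t)}$ (hence $\eta(k)>\eta(t)$ since $I_0$ is increasing) and $l_{\eta(k)}<l_{\eta(t)}$, then the rightmost position where $K$ and $L$ differ is $\eta(k)$, and there $L$ carries the smaller symbol $l_{\eta(k)}$, which in the \emph{reversed} symbol order is the larger one, so $L>K$. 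The other case is symmetric. Your sketch of the case analysis is on the right track, but you stopped short of actually carrying it out; once you flip the target inequality to $N<L$ and run through the two cases as above, the proof is complete and identical to the paper's.
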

\begin{proof}
By Proposition \ref{tool}, we have 
\[\begin{aligned}\psi_k(\pi_{\eta I_0|\eta L})= &\omega_{i_{\eta(k)}l_{\eta(k)}} \overline{\pi}_{\eta I_0|\eta L} \\
&+ (1-\delta_{i_{\eta(k)}>i_{\eta(1)}} -\delta_{l_{\eta(k)}<l_{\eta(1)}}) \overline{\pi}_{\eta I_0|l_{\eta(k)}l_{\eta(2)}\cdots l_{\eta(k-1)}l_{\eta(1)}}\\
&+ (1-\delta_{i_{\eta(k)}>i_{\eta(2)}} -\delta_{l_{\eta(k)}<l_{\eta(2)}}) \overline{\pi}_{\eta I_0|l_{\eta(1)}l_{\eta(k)}l_{\eta(3)}\cdots l_{\eta(k-1)}l_{\eta(2)}} + \ldots \\
&+ (1-\delta_{i_{\eta(k)}>i_{\eta(k-1)}} -\delta_{l_{\eta(k)}<l_{\eta(k-1)}}) \overline{\pi}_{\eta I_0|l_{\eta(1)}\cdots l_{\eta(k)}l_{\eta(k-1)}}.\end{aligned}\]
Therefore, the diagonal entries of our matrix are the same as described in the statement of the lemma, and it only remains to show that our matrix
is upper-triangular.

Consider the general term 
\[ (1-\delta_{i_{\eta(k)}>i_{\eta(t)}} -\delta_{l_{\eta(k)}<l_{\eta(t)}}) 
\overline{\pi}_{\eta I_0|l_{\eta(1)}\cdots l_{\eta(t-1)} l_{\eta(k)} l_{\eta(t+1)}\cdots l_{\eta(k-1)}l_{\eta(t)}}\]
in the above formula for $\psi_k(\pi_{\eta I_0|\eta L})$.
Let $\gamma=(\eta(t)\eta(k))$ be the transposition switching entries in positions $\eta(t)$ and $\eta(k)$ and $K=\gamma.L$. 

Then $\eta K=(l_{\eta(1)}\cdots l_{\eta(t-1)} l_{\eta(k)} l_{\eta(t+1)}\cdots l_{\eta(k-1)}l_{\eta(t)})$ and the above general term becomes
\[ (1-\delta_{i_{\eta(k)}>i_{\eta(t)}} -\delta_{l_{\eta(k)}<l_{\eta(t)}}) \overline{\pi}_{\eta I_0|\eta K}.\]

If $i_{\eta(k)}>i_{\eta(t)}$ and $l_{\eta(k)}>l_{\eta(t)}$,  or $i_{\eta(k)}<i_{\eta(t)}$ and $l_{\eta(k)}<l_{\eta(t)}$, then this term vanishes.

If $i_{\eta(k)}>i_{\eta(t)}$ and $l_{\eta(k)}<l_{\eta(t)}$, then $\eta(t)<\eta(k)$ and $l_{\eta(t)}>l_{\eta(k)}$ imply that
$K=\gamma.L<L$. If $i_{\eta(k)}<i_{\eta(t)}$ and $l_{\eta(k)}>l_{\eta(t)}$, then $\eta(t)>\eta(k)$ and $l_{\eta(t)}<l_{\eta(k)}$ imply again that
$K=\gamma.L<L$. Therefore, in both cases $\pi_{\eta I_0,\eta K} {\lneq}^\eta \pi_{\eta I_0,\eta L}$ and  
$\overline{\pi}_{I_0|K}{\lneq}\overline{\pi}_{I_0|L}$, it implies that our matrix is upper-triangular.
\end{proof}

The above lemma implies immediately that the determinant of the matrix of the map $\psi_k:S_{\eta I_0|J}\to \overline{S}_{I|J}$ is 
$(\prod_{t=1}^k \omega_{i_{\eta(k)},j_t})^{(k-1)!}$.

We illustrate this lemma on the following example.

\begin{ex}\label{ex2}
Let $m=n=3$ and a weight $\lambda$ is such that $\lambda^+_1>\lambda^+_2>\lambda^+_3$ and $\lambda^-_1>\lambda^-_2>\lambda^-_3$. Assume that $I=J=(123)$, hence $\lambda$ is $(I|J)$-robust. 
In the notation of Example \ref{ex1}, we have 
$\mathfrak{B}=\{\vec{b}_1, \vec{b}_2, \vec{b}_3, \vec{b}_4, \vec{b}_5, \vec{b}_6\},$
and 
\[\begin{array}{ll}\mathfrak{A}_{id}=\{\vec{a}_1, \vec{a}_2,\vec{a}_3,\vec{a}_4,\vec{a}_5,\vec{a}_6\}, 
&\mathfrak{A}_{(12)}=\{\vec{a}_7, \vec{a}_8,\vec{a}_9,\vec{a}_{10},\vec{a}_{11},\vec{a}_{12}\} \\
\mathfrak{A}_{(23)}=\{\vec{a}_{13}, \vec{a}_{14},\vec{a}_{15},\vec{a}_{16},\vec{a}_{17},\vec{a}_{18}\}, 
&\mathfrak{A}_{(321)}=\{\vec{a}_{19}, \vec{a}_{20},\vec{a}_{21},\vec{a}_{22},\vec{a}_{23},\vec{a}_{24}\}, \\
\mathfrak{A}_{(231)}=\{\vec{a}_{25}, \vec{a}_{26},\vec{a}_{27},\vec{a}_{28},\vec{a}_{29},\vec{a}_{30}\} \,\, \mbox{and}    
&\mathfrak{A}_{(13)}=\{\vec{a}_{31}, \vec{a}_{32},\vec{a}_{33},\vec{a}_{34},\vec{a}_{35},\vec{a}_{36}\}.
\end{array}\]
The matrices considered in Lemma \ref{surj} can be identified with blocks, formed by consecutive sextuples of columns, of the matrix in Example \ref{ex1}. 
\end{ex}

We say that weights $\lambda$ and $\mu$ are {\it simply-odd-linked}, and write $\lambda\sim_{sodd} \mu$, if $\lambda_{ij}=\mu$ or $\mu_{ij}=\lambda$, and 
$\omega_{ij}(\lambda)=0$ (in this case also $\omega_{ij}(\mu)=0$). We say that $\lambda$ and $\mu$ are {\it odd-linked}, and write $\lambda\sim_{odd} \mu$ if there is a chain of weights such that
$\lambda=\lambda_1\sim_{sodd}\lambda_2 \sim_{sodd} \cdots \sim_{sodd} \lambda_t=\mu$.

Let us note that if a weight $\mu$ of $H^0_G(\lambda)$ is odd-linked to $\lambda$, then $\mu=\lambda_{I|J}$ for some admissible $(I|J)$.

If $\lambda\sim_{odd} \mu$ and $\lambda$, $\mu$, and all intermediate $\lambda_i$ in the above chain are polynomial weights, then we write $\lambda\sim_{podd} \mu$.

\begin{pr}\label{rob-link}
Assume that all entries in $I$ are distinct, all entries in $J$ are distinct, and $\lambda$ is $(I|J)$-robust.
If the simple $G$-supermodule $L_G(\lambda_{I|J})$ is a composition factor of $H^0_G(\lambda)$, then $\lambda \sim_{odd} \lambda_{I|J}$.

If both $\lambda$ and $\lambda_{I|J}$ are polynomial weights and the simple $G$-supermodule $L_G(\lambda_{I|J})$ is a composition factor of $\nabla(\lambda)$, then $\lambda \sim_{podd} \lambda_{I|J}$.

\end{pr}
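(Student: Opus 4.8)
The plan is to combine Proposition \ref{comp} with the explicit triangular description of the matrix of $\psi_k$ from Lemma \ref{surj}. By Proposition \ref{comp}, since $\la$ and $\mu=\la_{I|J}$ are both dominant (and, in the second statement, polynomial), $L_G(\mu)$ is a composition factor of $H^0_G(\la)$ (respectively $\nabla(\la)$) if and only if $\psi_\mu: S_\mu \to \overline{S}_\mu$ fails to be surjective. So I would first show: if $\psi_\mu$ is not surjective, then some $\omega_{i_s j_s}(\la)=0$ for an appropriate index, which triggers one step of simple-odd-linkage. Because all entries in $I$ and in $J$ are distinct and $\la$ is $(I|J)$-robust, Proposition \ref{basis} gives a basis $\{\overline{\pi}_{I_0|L}\}$ of $\overline{S}_\mu$ of size $k!$, and $S_\mu$ contains the direct sum of the subspaces $S_{\eta I_0|J}$, $\eta \in \Sigma_k$, each of dimension $k!$. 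By Lemma \ref{surj} the restriction $\psi_k: S_{\eta I_0|J} \to \overline{S}_\mu$ has, in the chosen ordered bases, an upper-triangular matrix with diagonal entries $(-1)^\eta \omega_{i_{\eta(k)}, l_{\eta(k)}}(\la)$; taking $\eta = \mathrm{id}$ already shows $\psi_k|_{S_{I_0|J}}$ is an isomorphism onto $\overline{S}_\mu$ unless some $\omega_{i_k, l_k}(\la) = 0$ for $l_k$ ranging over the entries of $J$ (as $l_k$ runs over $J_0$). Hence if $\psi_\mu$ is not surjective, there is an index $t$ with $\omega_{i_k, j_t}(\la)=0$; more care is needed to pick the \emph{right} pair so that the resulting weight is again of the form $\la_{I'|J'}$ with $(I'|J')$ admissible and one step shorter.

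The second step is the inductive reduction. Suppose $\psi_\mu$ is not surjective and fix a pair $(a|b)$ with $a \in I$, $b \in J$, $\omega_{ab}(\la)=0$. Set $\la' = \la_{ab}$, so $\la \sim_{sodd} \la'$ by definition (both $\omega_{ab}(\la)=0$ and hence $\omega_{ab}(\la')=0$, since $\omega_{ab}$ is unchanged: removing $a$ from the $GL(m)$-part and adding $b$ to the $GL(n)$-part keeps $\la^+_a + \la^-_b$ constant only if... — one checks $\omega_{ab}(\la_{ab}) = \omega_{ab}(\la)$ directly from \eqref{2} since $\la^+_a \mapsto \la^+_a - 1$ and $\la^-_b \mapsto \la^-_b + 1$). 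Now $\mu = \la_{I|J} = (\la')_{I'|J'}$ where $(I'|J')$ is obtained from $(I|J)$ by deleting one occurrence of $a$ in $I$ and one occurrence of $b$ in $J$; since entries of $I$ and $J$ are distinct this deletion is unambiguous, $(I'|J')$ remains admissible, and $\la'$ is $(I'|J')$-robust (robustness of $\la$ for $(I|J)$ and the constraint $\omega_{ab}(\la)=0$ together force the required inequalities on $\la'$). I would then argue that $L_G(\mu)$ remains a composition factor of $H^0_G(\la')$ (respectively $\nabla(\la')$) — this is the step needing attention: one wants that non-surjectivity of $\psi_\mu$ relative to $\la$ implies the analogous non-surjectivity relative to $\la'$, equivalently that $L_G(\mu)$ being a composition factor of $H^0_G(\la)$ forces it to be a composition factor of $H^0_G(\la')$. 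Applying the induction hypothesis (on $k$) to $\la'$ and $(I'|J')$ gives $\la' \sim_{odd} \mu$, and prepending $\la \sim_{sodd} \la'$ yields $\la \sim_{odd} \mu$. For the polynomial statement one notes that $\la'$ is still polynomial (it lies between $\la$ and $\mu$ and $\mu$ is polynomial), so the whole chain consists of polynomial weights and $\la \sim_{podd} \mu$.

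The base case $k=0$ is trivial ($\mu = \la$), and $k=1$ reads off directly from Proposition \ref{tool} / Lemma \ref{surj}: $\psi_1: S_{ij} \to \overline{S}_{ij}$ is multiplication by $\omega_{ij}(\la)$, non-surjective exactly when $\omega_{ij}(\la)=0$, i.e.\ exactly when $\la \sim_{sodd} \la_{ij}$. The inductive step relies on: (i) the identity $\omega_{ab}(\la_{ab}) = \omega_{ab}(\la)$, which is immediate; (ii) stability of admissibility and robustness under the deletion, which is elementary combinatorics; and (iii) the transfer of the composition-factor property from $H^0_G(\la)$ to $H^0_G(\la')$.

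The main obstacle I anticipate is item (iii): making precise why non-surjectivity of $\psi_\mu$ over $\la$ produces non-surjectivity over $\la'$. The clean route is to locate, inside $\overline{S}_\mu = \overline{S}_{I|J}$, an even-primitive vector in $F_k$ not in the image of $\psi_\mu$, and to track which $\omega$-factor its ``position'' in the triangular picture of Lemma \ref{surj} makes degenerate; because the diagonal entries across all blocks $S_{\eta I_0|J}$ are the numbers $\omega_{i_{\eta(k)}, j_t}(\la)$, the cokernel of $\psi_\mu$ is governed precisely by which of the $\omega_{i_s j_t}(\la)$ vanish, and a short argument with the determinant formula $\bigl(\prod_t \omega_{i_{\eta(k)}, j_t}\bigr)^{(k-1)!}$ pins down the relevant vanishing pair $(a|b)$. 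Once that pair is identified, re-reading the same matrix with $\la$ replaced by $\la'$ and $(I|J)$ by $(I'|J')$ — using $\omega_{ab}(\la')=0$ together with $\omega_{a', b'}(\la') = \omega_{a'b'}(\la)$ for the surviving pairs, which again follows from \eqref{2} — shows the corresponding $\psi_\mu$ over $\la'$ is still non-surjective, closing the induction.
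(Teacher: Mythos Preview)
Your approach has a genuine gap at step (iii), and the paper's proof avoids this entirely by taking a different, non-inductive route.

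The problem: you never establish why non-surjectivity of $\psi_\mu$ relative to $\la$ implies non-surjectivity relative to $\la' = \la_{ab}$. Your suggestion to ``re-read the same matrix with $\la$ replaced by $\la'$'' does not work as stated: the two matrices have different sizes ($k!$ versus $(k-1)!$ basis vectors), act on different floors, and there is no evident map between the two $\psi_\mu$'s. The fact that $\omega_{a'b'}(\la') = \omega_{a'b'}(\la)$ for $a'\in I'$, $b'\in J'$ is true but insufficient: it tells you diagonal entries agree, but the cokernel of the $(k-1)$-level map is a different object and you give no mechanism for producing a vector outside its image. Equivalently, you are asserting that $L_G(\mu)$ being a composition factor of $H^0_G(\la)$ forces it to be a composition factor of $H^0_G(\la')$, which is itself a linkage-type statement you have not proved.

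The paper's argument sidesteps induction completely. Rather than looking at a single block $S_{\eta I_0|J}$, it reorganises \emph{all} the $\eta$'s simultaneously into ``collections'' $C_N$, one for each permutation $N$ of $J_0$: the collection $C_N$ contains $\pi_{\eta I_0|\eta N}$ for every $\eta\in\Sigma_k$. By Lemma \ref{surj}, the image $\psi_k(\pi_{\eta I_0|\eta N})$ has coefficient $(-1)^\eta\omega_{i_{\eta(k)},n_{\eta(k)}}$ at $\overline{\pi}_{I_0|N}$ and only hits $\overline{\pi}_{I_0|L}$ with $L<N$ otherwise. As $\eta$ ranges over $\Sigma_k$, the diagonal coefficients attached to the single target vector $\overline{\pi}_{I_0|N}$ form the set $O_N=\{\omega_{i_t,n_t}\mid t=1,\dots,k\}$. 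If for every $N$ some element of $O_N$ is nonzero, one can choose one $\eta_N$ per $N$ and assemble an upper-triangular invertible matrix, so $\psi_\mu$ is surjective. Hence non-surjectivity forces the existence of a single $N$ with $\omega_{i_t,n_t}(\la)=0$ for \emph{all} $t$ simultaneously. This yields the entire chain
\[
\la \sim_{sodd} \la_{i_1|n_1} \sim_{sodd} \la_{i_1i_2|n_1n_2} \sim_{sodd} \cdots \sim_{sodd} \la_{I|N}=\la_{I|J}
\]
in one stroke, with no need to transfer the composition-factor hypothesis to any intermediate weight. Your final paragraph gestures toward using all blocks, but then routes the conclusion back through the unsupported inductive step; the correct move is to drop the induction and extract the full chain directly from the vanishing of a single $O_N$.
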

\begin{proof}
We use Proposition \ref{comp} and assume that the map $\psi_k:S_{I|J} \to \overline{S}_{I|J}$ is not surjective.

Create collections $C_N$, each indexed by a multi-index $N=(n_1, \ldots, n_k)$ of content $\cont(J)$. 
The collection $C_N$ consists of vectors $\pi_{\eta I_0|\eta N}$ for all $\eta\in \Sigma_k$. The images 
$\psi_k(\pi_{\eta I_0|\eta N})$ of elements in the collection $C_N$ expressed as a linear combination of vectors $\overline{\pi}_{I_0|L}$ have the property that 
the coefficient at $\overline{\pi}_{I_0|N}$ equals $\omega_{i_{\eta(k)},n_{\eta(k)}}$, and all other nonzero coefficients appear only at $\overline{\pi}_{I_0|L}$ for $L<N$.
To each collection $C_N$ we assign the set $O_N=\{\omega_{i_{\eta(k)},n_{\eta(k)}}|\eta\in \Sigma_k\}= \{\omega_{i_t,n_t}|t=1, \ldots k\}$.
It is a crucial observation that if every $O_N$ contains a nonzero element, then the map $\psi_k: S_{I|J} \to \overline{S}_{I|J}$ is surjective. This is because 
by Lemma \ref{surj}, we can find a set of $k!$ vectors $\pi_{K|L}$ of weight $\lambda_{I|J}$ such that the matrix of $\psi_k$ restricted on the span of these vectors is invertible.

Therefore, if $\psi_k: S_{I|J} \to \overline{S}_{I|J}$ is not surjective, then there is $N$ such that all elements of $O_N$ are equal to zero. 
In this case we derive that $\lambda$ and $\lambda_{I|J}$ are odd-linked via a sequence 
$\lambda \sim_{sodd} \lambda_{i_1|n_1} \sim_{sodd} \lambda_{i_1i_2|n_1n_2} \sim_{sodd} \cdots \sim_{sodd} \lambda_{I|J}$ because
$\omega_{i_2|n_2}(\lambda_{i_1|n_1})=\omega_{i_2|n_2}(\lambda)=0$ and so on.

The last statement follows from Corollary 7.2 of \cite{fm-p2}.
\end{proof}

\begin{ex}\label{ex3}
Let $m=n=3$ and a weight $\lambda$ is such that $\lambda^+_1>\lambda^+_2>\lambda^+_3$ and $\lambda^-_1>\lambda^-_2>\lambda^-_3$. Assume that $I=J=(123)$, hence $\lambda$ is $(I|J)$-robust. 
In the notation of Example \ref{ex1}, we have $\mathfrak{B}=\{\vec{b}_1, \vec{b}_2, \vec{b}_3, \vec{b}_4, \vec{b}_5, \vec{b}_6\}.$
To illustrate the argument in the previous proposition,  
we determine that collections are
\[\begin{aligned}&C_{123}=\{\vec{a}_1, \vec{a}_7, \vec{a}_{13}, \vec{a}_{19}, \vec{a}_{25},\vec{a}_{31}\}, 
&C_{213}=\{\vec{a}_2, \vec{a}_8, \vec{a}_{14}, \vec{a}_{20}, \vec{a}_{26},\vec{a}_{32}\}, \\
&C_{132}=\{\vec{a}_3, \vec{a}_9, \vec{a}_{15}, \vec{a}_{21}, \vec{a}_{27},\vec{a}_{33}\},
&C_{312}=\{\vec{a}_4, \vec{a}_{10}, \vec{a}_{16}, \vec{a}_{22}, \vec{a}_{28},\vec{a}_{34}\}, \\
&C_{231}=\{\vec{a}_5, \vec{a}_{11}, \vec{a}_{17}, \vec{a}_{23}, \vec{a}_{29},\vec{a}_{35}\} \, \mbox{and} 
&C_{321}=\{\vec{a}_6, \vec{a}_{12}, \vec{a}_{18}, \vec{a}_{24}, \vec{a}_{30},\vec{a}_{36}\},
\end{aligned}\]
while the corresponding sets are 
\[\begin{aligned}&O_{123}=\{\omega_{33}, \omega_{22}, \omega_{11}\}, O_{213}=\{\omega_{33}, \omega_{21}, \omega_{12}\}, &O_{132}=\{\omega_{32}, \omega_{23}, \omega_{11}\}, \\
&O_{312}=\{\omega_{32}, \omega_{21}, \omega_{13}\}, O_{231}=\{\omega_{31}, \omega_{23}, \omega_{12}\} \, \mbox{and} \, &O_{321}=\{\omega_{31}, \omega_{22}, \omega_{13}\}.\end{aligned}\]
\end{ex}

\subsection{General case when the weight $\lambda$ is $(I|J)$-robust}

Let us start with the following three examples.

\begin{ex}\label{ex5}
Let $m=n=3$ and a weight $\lambda$ is such that $\lambda^+_1-1>\lambda^+_2>\lambda^+_3$ and $\lambda^-_1>\lambda^-_2>\lambda^-_3$. Assume that $I=(113)$ and $J=(123)$, hence $\lambda$ is $(I|J)$-robust. 
Then 
\[\mathfrak{B}=\{\vec{b}_1=\overline{\pi}_{113,123}, \vec{b}_2=\overline{\pi}_{113,132},\vec{b}_3=\overline{\pi}_{113,231}\}\]
is a basis of all even-primitive vectors of weight $\lambda_{I|J}$ in $F_k$.
The set $\mathfrak{A}=$
\[\begin{aligned}\{&\vec{a}_1=\pi_{113,123}, \vec{a}_2=\pi_{113,213},\vec{a}_3=\pi_{113,132},\vec{a}_4=\pi_{113,312},\vec{a}_5=\pi_{113,231},\vec{a}_6=\pi_{113,321},\\
&\vec{a}_{7}=\pi_{131,132}, \vec{a}_{8}=\pi_{131,231},\vec{a}_{9}=\pi_{131,123},\vec{a}_{10}=\pi_{131,321},\vec{a}_{11}=\pi_{131,213},\vec{a}_{12}=\pi_{131,312}, \\
&\vec{a}_{13}=\pi_{311,312}, \vec{a}_{14}=\pi_{311,321},\vec{a}_{15}=\pi_{311,213},\vec{a}_{16}=\pi_{311,231},\vec{a}_{17}=\pi_{311,123},\vec{a}_{18}=\pi_{311,132}\}
\end{aligned}\]
is linearly independent and spans the space $A$ of even-primitive vectors of weight $\lambda_{I|J}$ in $T_k$.

The matrix of $\psi_k$ with respect to the bases $\mathfrak{A}$ and $\mathfrak{B}$ computed using Proposition \ref{tool} is given as
\[\left(\begin{matrix}
\omega_{33}&-\omega_{33}&-1&1&1&-1&-\omega_{12}+1&\omega_{11}&-1&0\\
0&0&\omega_{32}&-\omega_{32}&-1&1&0&0&-\omega_{13}+1&\omega_{11}\\
0&0&0&0&\omega_{31}&-\omega_{31}&0&0&0&0
\end{matrix}\right.\]

\[\left.\begin{matrix} 
1&0&\omega_{12}-1&-\omega_{11}&1&0&-1&0\\
0&1&0&0&\omega_{13}-1&-\omega_{11}&0&-1\\
-\omega_{13}+1&\omega_{12}&0&0&0&0&\omega_{13}-1&-\omega_{12}
\end{matrix}\right)\]

The collections are 
\[\begin{aligned}&C_{1}=\{\vec{a}_1, \vec{a}_2, \vec{a}_{7}, \vec{a}_{8}, \vec{a}_{13},\vec{a}_{14}\}, 
&C_{2}=\{\vec{a}_3, \vec{a}_4, \vec{a}_{9}, \vec{a}_{10}, \vec{a}_{15},\vec{a}_{16}\}, \\
&C_{3}=\{\vec{a}_5, \vec{a}_6, \vec{a}_{11}, \vec{a}_{12}, \vec{a}_{17},\vec{a}_{18}\},
\end{aligned}\]
while the corresponding sets are 
\[\begin{aligned}&O_{1}=\{\omega_{33}, \omega_{12}-1, \omega_{11}\}, 
O_{2}=\{\omega_{32}, \omega_{11}, \omega_{13}-1\}, 
&O_{3}=\{\omega_{31}, \omega_{12}, \omega_{13}-1\}.
\end{aligned}\]

If all entries in $O_{1}$ are zeroes, then $\omega_{12}(\lambda_{11})=0$, and 
we get $\lambda\sim_{sodd}\lambda_{11}\sim_{sodd}\lambda_{11|12}\sim_{sodd}\lambda_{113|123}$.

If all entries in $O_{2}$ are zeroes, then $\omega_{13}(\lambda_{11})=0$, and 
we get $\lambda\sim_{sodd}\lambda_{11}\sim_{sodd}\lambda_{11|13}\sim_{sodd}\lambda_{113|132}$.

If all entries in $O_{3}$ are zeroes, then $\omega_{13}(\lambda_{12})=0$, and 
we get $\lambda\sim_{sodd}\lambda_{12}\sim_{sodd}\lambda_{11|23}\sim_{sodd}\lambda_{113|231}$.
\end{ex}

\begin{ex}\label{ex6}
Let $m=n=3$ and a weight $\lambda$ is such that $\lambda^+_1-1>\lambda^+_2>\lambda^+_3$ and $\lambda^-_1-1>\lambda^-_2>\lambda^-_3$. Assume that $I=(113)$ and $J=(112)$, hence $\lambda$ is $(I|J)$-robust. 
Then 
\[\mathfrak{B}=\{\vec{b}_1=\overline{\pi}_{113,121}\}\]
is a basis of all even-primitive vectors of weight $\lambda_{I|J}$ in $F_k$.
The set $\mathfrak{A}=$
\[\begin{aligned}\{&\vec{a}_1=\pi_{113,121}, \vec{a}_2=\pi_{113,211},\vec{a}_3=\pi_{131,112},\vec{a}_4=\pi_{131,211},\vec{a}_5=\pi_{311,112},\vec{a}_6=\pi_{311,121}\}
\end{aligned}\]
is linearly independent and spans the space $A$ of even-primitive vectors of weight $\lambda_{I|J}$ in $T_k$.

The matrix of $\psi_k$ with respect to the bases $\mathfrak{A}$ and $\mathfrak{B}$ computed using Proposition \ref{tool} is given as
\[\begin{pmatrix}\omega_{31}&-\omega_{31}&1-\omega_{12}&\omega_{11}+1&\omega_{12}-1&-\omega_{11}-1\end{pmatrix}\]

There is only one collection 
\[\begin{aligned}&C_{1}=\{\vec{a}_1, \vec{a}_2, \vec{a}_{3}, \vec{a}_{4}, \vec{a}_{5},\vec{a}_{6}\}\end{aligned}\]
and the corresponding set is
\[\begin{aligned}&O_{1}=\{\omega_{31}, \omega_{12}-1, \omega_{11}+1\}.
\end{aligned}\]

If all entries in $O_{1}$ are zeroes, then $\omega_{11}(\lambda_{31})=0$ and $\omega_{12}(\lambda_{31|11})=0$, 
and we get $\lambda\sim_{sodd}\lambda_{31}\sim_{sodd}\lambda_{31|11}\sim_{sodd}\lambda_{311|112}$.
\end{ex}

\begin{ex}\label{ex7}
Let $m=n=3$ and a weight $\lambda$ is such that $\lambda^+_1>\lambda^+_2>\lambda^+_3$ and $\lambda^-_1-2>\lambda^-_2>\lambda^-_3$. Assume that $I=(123)$ and $J=(111)$, hence $\lambda$ is $(I|J)$-robust. 
Then 
\[\mathfrak{B}=\{\vec{b}_1=\overline{\pi}_{123,111}\}\]
is a basis of all even-primitive vectors of weight $\lambda_{I|J}$ in $F_k$.
The set $\mathfrak{A}=$
\[\begin{aligned}\{&\vec{a}_1=\pi_{123,111}, \vec{a}_2=\pi_{213,111},\vec{a}_3=\pi_{132,111},\vec{a}_4=\pi_{312,111},\vec{a}_5=\pi_{231,111},\vec{a}_6=\pi_{321,111}\}
\end{aligned}\]
is linearly independent and spans the space $A$ of even-primitive vectors of weight $\lambda_{I|J}$ in $T_k$.

The matrix of $\psi_k$ with respect to the bases $\mathfrak{A}$ and $\mathfrak{B}$ computed using Proposition \ref{tool} is given as
\[\begin{pmatrix}\omega_{31}&-\omega_{31}&-\omega_{21}-1&\omega_{21}+1&\omega_{11}+2&-\omega_{11}-2\end{pmatrix}\]

There is only one collection 
\[\begin{aligned}&C_{1}=\{\vec{a}_1, \vec{a}_2, \vec{a}_{3}, \vec{a}_{4}, \vec{a}_{5},\vec{a}_{6}\}\end{aligned}\]
and the corresponding set is
\[\begin{aligned}&O_{1}=\{\omega_{31}, \omega_{21}+1, \omega_{11}+2\}.
\end{aligned}\]

If all entries in $O_{1}$ are zeroes, then $\omega_{21}(\lambda_{31})=0$ and $\omega_{11}(\lambda_{32|11})=0$, 
and
we get $\lambda\sim_{sodd}\lambda_{31}\sim_{sodd}\lambda_{32|11}\sim_{sodd}\lambda_{321|111}$.
\end{ex}

Since $\lambda$ is $(I|J)$-robust, by Proposition \ref{basis}, the basis $\mathfrak{B}$ of even-primitive vectors of weight $\lambda_{I|J}$ in $H^0_G(\lambda)$ 
consists of elements $\overline{\pi}_{K|L}$, where $(K|L)$ is admissible and $\cont(K|L)=\cont(I|J)$. 
Denote by $S(\lambda,I,J)$ the set of all multi-indices $L$ such that $(I_0| L)$ is admissible and $\cont(L)=\cont(J)$. Then $S(\lambda, I, J)$ serves as an index set for the basis 
$\mathfrak{B}$.
The previously defined order $<$ on $S_J$ induces the order $<$ on $\mathfrak{B}$ and the order $<^{\eta}$ on $S(\lambda,I,J)$.

For a permutation $\eta\in \Sigma_k$ define $\mathfrak{B}_\eta=\{\pi_{\eta I_0,L}|L\in S(\lambda,I,J)\}$ and $S_\eta=span(\mathfrak{B}_\eta)$.

For $L\in S(\lambda,I,J)$ and $\eta\in \Sigma_k$ define 
\[a_{L,\eta}= |\{t=1,\dots, k|i_{\eta(t)}=i_{\eta(k)} \mbox{ and } l_{\eta(t)}<l_{\eta(k)} \}|\]
and 
\[b_{L,\eta}= |\{t=1,\dots, k|l_{\eta(t)}=l_{\eta(k)} \mbox{ and } i_{\eta(t)}>i_{\eta(k)} \}|.\]

\begin{lm}\label{surj2}
Assume that $\lambda$ is $(I|J)$-robust. Let $\eta\in \Sigma_k$ and $L\in S(\lambda,I,J)$. 
Then the matrix of the map $\psi_k:S_{\eta}\to \overline{S}_{I|J}$ for the bases $\mathfrak{B}_\eta$ ordered by $<^\eta$, and $\mathfrak{B}$ ordered by $<$, is an upper-triangular matrix. Its diagonal entry
corresponding to $\pi_{\eta I_0|\eta L}$ and $\overline{\pi}_{I_0|L}$ is
\[\alpha_{\eta I_0|\eta L}= (-1)^\eta (\omega_{i_{\eta(k)}, l_{\eta(k)}}-a_{L,\eta}+b_{L,\eta}).\]
\end{lm}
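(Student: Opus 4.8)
The plan is to reduce everything to Proposition~\ref{tool} and then track, term by term, where each summand of $\psi_k(\pi_{\eta I_0|\eta L})$ lands once it is expressed in the admissible basis $\{\overline{\pi}_{I_0|M}\mid M\in S(\la,I,J)\}$ of $\overline{S}_{I|J}$ (a basis since $\la$ is $(I|J)$-robust, by Proposition~\ref{basis}). Applying Proposition~\ref{tool} to the admissible pair $(\eta I_0|\eta L)$, whose last entries are $i_{\eta(k)}$ and $l_{\eta(k)}$, I would write
\[\psi_k(\pi_{\eta I_0|\eta L})=\omega_{i_{\eta(k)}l_{\eta(k)}}\,\overline{\pi}_{\eta I_0|\eta L}+\sum_{t=1}^{k-1}\bigl(1-\delta_{i_{\eta(k)}>i_{\eta(t)}}-\delta_{l_{\eta(k)}<l_{\eta(t)}}\bigr)\,\overline{\pi}_{\eta I_0|\eta K^{(t)}},\]
where $K^{(t)}=\gamma_t.L$ and $\gamma_t$ transposes the positions $\eta(t)$ and $\eta(k)$. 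Using the identity $\overline{\pi}_{I_0|N}=(-1)^{\sigma}\overline{\pi}_{\sigma I_0|\sigma N}$ I would rewrite every term as a multiple of $(-1)^\eta\overline{\pi}_{I_0|K^{(t)}}$ and then replace $\overline{\pi}_{I_0|K^{(t)}}$ either by $0$ (when $(I_0|K^{(t)})$ contains a repeated pair) or by $(-1)^{\sigma_t}\overline{\pi}_{I_0|M^{(t)}}$, where $M^{(t)}\in S(\la,I,J)$ is obtained from $K^{(t)}$ by sorting, inside each $I_0$-block, the $l$-entries increasingly, and $\sigma_t$ is the corresponding block permutation. Admissibility of $(I_0|L)$ forces, for each $t<k$, at most one of $i_{\eta(t)}=i_{\eta(k)}$, $l_{\eta(t)}=l_{\eta(k)}$ to hold (the pairs in $(I_0|L)$ are distinct), which splits the sum into three cases.

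Next I would extract the diagonal entry. If $i_{\eta(t)}=i_{\eta(k)}$, the coefficient equals $1-\delta_{l_{\eta(k)}<l_{\eta(t)}}$, nonzero precisely when $l_{\eta(t)}<l_{\eta(k)}$; then $\gamma_t$ fixes $I_0$, so $\overline{\pi}_{I_0|K^{(t)}}=\overline{\pi}_{\gamma_t I_0|\gamma_t L}=(-1)^{\gamma_t}\overline{\pi}_{I_0|L}=-\overline{\pi}_{I_0|L}$, and the term contributes $-(-1)^\eta\overline{\pi}_{I_0|L}$; the number of such $t$ is exactly $a_{L,\eta}$. If $l_{\eta(t)}=l_{\eta(k)}$, the coefficient equals $1-\delta_{i_{\eta(k)}>i_{\eta(t)}}$, nonzero precisely when $i_{\eta(t)}>i_{\eta(k)}$; then $\gamma_t$ transposes positions carrying equal $l$-entries, so $K^{(t)}=L$ and the contribution is $+(-1)^\eta\overline{\pi}_{I_0|L}$; the number of such $t$ is exactly $b_{L,\eta}$. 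Adding the leading term $\omega_{i_{\eta(k)}l_{\eta(k)}}\overline{\pi}_{\eta I_0|\eta L}=(-1)^\eta\omega_{i_{\eta(k)}l_{\eta(k)}}\overline{\pi}_{I_0|L}$, the coefficient of $\overline{\pi}_{I_0|L}$ comes out to $(-1)^\eta(\omega_{i_{\eta(k)}l_{\eta(k)}}-a_{L,\eta}+b_{L,\eta})=\alpha_{i_{\eta(k)},l_{\eta(k)}}$, as required.

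For the upper-triangularity I would treat the third case, $i_{\eta(t)}\ne i_{\eta(k)}$ and $l_{\eta(t)}\ne l_{\eta(k)}$, showing it only feeds basis vectors $\overline{\pi}_{I_0|M}$ with $M<L$. Here the coefficient is nonzero precisely when the $i$- and $l$-comparisons of $\eta(k)$ against $\eta(t)$ point in opposite directions; writing $p'<q'$ for the two positions $\eta(t),\eta(k)$, this means $i_{p'}<i_{q'}$ (so the block $B$ of positions with $I_0$-value $i_{q'}$ lies entirely to the right of $p'$) and $l_{p'}>l_{q'}$. If $l_{p'}$ already occurs among the $l$-entries of $B$ in $L$, then $(I_0|K^{(t)})$ has a repeated pair and the term vanishes. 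Otherwise, listing the $l$-entries of $B$ in $L$ as $u_1<\dots<u_d$ with $l_{q'}=u_r$ and letting $j\ge r$ be given by $u_j<l_{p'}<u_{j+1}$ (with $u_{d+1}=\infty$), the resorted multi-index $M^{(t)}$ coincides with $L$ everywhere to the right of $B$ and in every slot of $B$ past the $j$-th, while its $j$-th slot in $B$ carries $l_{p'}>u_j$; by the definition of the reverse Semitic lexicographic order this is exactly the condition $M^{(t)}<L$. Reading off the three cases against $\mathfrak{B}_\eta$ (ordered by $<^\eta$) and $\mathfrak{B}$ (ordered by $<$), I conclude that $\psi_k(\pi_{\eta I_0|\eta L})$ equals $\alpha_{i_{\eta(k)},l_{\eta(k)}}\overline{\pi}_{I_0|L}$ plus a combination of $\overline{\pi}_{I_0|M}$ with $M<L$, which gives the claim.

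The routine part consists of the cancellations encoded by the $\delta$'s of Proposition~\ref{tool}. The genuinely delicate point, and where I expect the main obstacle, is the third case above: pinning down the rightmost slot at which the swap-and-resort modifies $L$ and verifying that the entry installed there is the larger one in the reverse Semitic lexicographic convention. A secondary point needing care is the sign $-1$ in the first case and keeping the vanishing sub-case of the third case cleanly separated from the non-vanishing one.
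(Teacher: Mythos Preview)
Your argument is correct and follows essentially the same route as the paper's own proof: apply Proposition~\ref{tool} to $\pi_{\eta I_0|\eta L}$, separate the diagonal contributions coming from equal $i$-entries (yielding $-a_{L,\eta}$) and equal $l$-entries (yielding $+b_{L,\eta}$), and show the remaining terms land strictly below $\overline{\pi}_{I_0|L}$ in the reverse Semitic order. One terminological slip: the pair $(\eta I_0|\eta L)$ is not in general admissible in the sense defined earlier (only $(I_0|L)$ is); Proposition~\ref{tool} is stated for admissible pairs, but its formula is valid as an identity in $\overline{Q}_k$ for any $(I|J)$ since the leading-term computation preceding it never uses admissibility, and indeed the paper itself applies it to $(\eta I_0|\eta L)$ without comment. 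Your treatment of the off-diagonal case is in fact more explicit than the paper's: where the paper simply asserts that $\overline{\pi}_{I_0|K}$ is either zero or corresponds to a basis element smaller than $\overline{\pi}_{I_0|L}$, you locate the rightmost slot in block $B$ at which the resorted $M^{(t)}$ differs from $L$ and verify it carries the larger entry, which is exactly what the reverse Semitic comparison requires.
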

\begin{proof}
By Proposition \ref{tool}, we have 
\[\begin{aligned}\psi_k(\pi_{\eta I_0|\eta L})= &\omega_{i_{\eta(k)}l_{\eta(k)}} \overline{\pi}_{\eta I_0|\eta L} \\
&+ (1-\delta_{i_{\eta(k)}>i_{\eta(1)}} -\delta_{l_{\eta(k)}<l_{\eta(1)}}) \overline{\pi}_{\eta I_0|l_{\eta(k)}l_{\eta(2)}\cdots l_{\eta(k-1)}l_{\eta(1)}}\\
&+ (1-\delta_{i_{\eta(k)}>i_{\eta(2)}} -\delta_{l_{\eta(k)}<l_{\eta(2)}}) \overline{\pi}_{\eta I_0|l_{\eta(1)}l_{\eta(k)}l_{\eta(3)}\cdots l_{\eta(k-1)}l_{\eta(2)}} + \ldots \\
&+ (1-\delta_{i_{\eta(k)}>i_{\eta(k-1)}} -\delta_{l_{\eta(k)}<l_{\eta(k-1)}}) \overline{\pi}_{\eta I_0|l_{\eta(1)}\cdots l_{\eta(k)}l_{\eta(k-1)}}.\end{aligned}\]

Consider the general term 
\[ (1-\delta_{i_{\eta(k)}>i_{\eta(t)}} -\delta_{l_{\eta(k)}<l_{\eta(t)}}) 
\overline{\pi}_{\eta I_0|l_{\eta(1)}\cdots l_{\eta(t-1)} l_{\eta(k)} l_{\eta(t+1)}\cdots l_{\eta(k-1)}l_{\eta(t)}}\]
in the above formula for $\psi_k(\pi_{\eta I_0|\eta L})$.

If $i_{\eta(k)}>i_{\eta(t)}$ and $l_{\eta(k)}\geq l_{\eta(t)}$,  or $i_{\eta(k)}\leq i_{\eta(t)}$ and $l_{\eta(k)}<l_{\eta(t)}$, then this term vanishes.

If $i_{\eta(t)}=i_{\eta(k)}$ and $l_{\eta(t)}<l_{\eta(k)}$, then 
\[\overline{\pi}_{\eta I_0|l_{\eta(1)}\cdots l_{\eta(t-1)} l_{\eta(k)} l_{\eta(t+1)}\cdots l_{\eta(k-1)}l_{\eta(t)}}=
-\overline{\pi}_{\eta I_0|\eta L}.\]

If $i_{\eta(t)}>i_{\eta(k)}$ and $l_{\eta(t)}=l_{\eta(k)}$, then 
\[\overline{\pi}_{\eta I_0|l_{\eta(1)}\cdots l_{\eta(t-1)} l_{\eta(k)} l_{\eta(t+1)}\cdots l_{\eta(k-1)}l_{\eta(t)}}=
+\overline{\pi}_{\eta I_0|\eta L}.\]

Adding up $\omega_{i_{\eta(k)}l_{\eta(k)}} \overline{\pi}_{\eta I_0|\eta L}$, and all other terms considered so far, we obtain
\[(\omega_{i_{\eta(k)}, l_{\eta(k)}}-a_{L,\eta}+b_{L,\eta}) \overline{\pi}_{\eta I_0|\eta L}.\] 
Since $i_{\eta(t)}=i_{\eta(k)}$ and $l_{\eta(t)}=l_{\eta(k)}$ is not possible, it remains to analyze two cases: 
$i_{\eta(k)}>i_{\eta(t)}$ and $l_{\eta(k)}<l_{\eta(t)}$; and $i_{\eta(k)}<i_{\eta(t)}$ and $l_{\eta(k)}>l_{\eta(t)}.$

Let $\gamma=(\eta(t)\eta(k))$ be the transposition switching entries in positions $\eta(t)$ and $\eta(k)$ and $K=\gamma.L$. 
Then the above general term becomes
\[ (1-\delta_{i_{\eta(k)}>i_{\eta(t)}} -\delta_{l_{\eta(k)}<l_{\eta(t)}}) \overline{\pi}_{\eta I_0|\eta K}.\]

If $i_{\eta(k)}>i_{\eta(t)}$ and $l_{\eta(k)}<l_{\eta(t)}$, then $\eta(t)<\eta(k)$ and $l_{\eta(t)}>l_{\eta(k)}$; applying $\gamma$ brings higher value earlier and smaller value later,
which imply that $K=\gamma.L\lneq L$. Additionally, since $\gamma$ exchanges positions corresponding to different values of $i$, 
we obtain that either $\overline{\pi}_{I_0|K}=0$, or otherwise it equals to an element from $\mathfrak{B}$ that is smaller than $\overline{\pi}_{I_0|L}$.
If $i_{\eta(k)}<i_{\eta(t)}$ and $l_{\eta(k)}>l_{\eta(t)}$, then $\eta(t)>\eta(k)$ and $l_{\eta(t)}<l_{\eta(k)}$ imply again that $K=\gamma.L\lneq L$ 
and, analogously as above, we either get $\overline{\pi}_{I_0|K}=0$ or it equals to an element from $\mathfrak{B}$ that is smaller than $\overline{\pi}_{I_0|L}$.

Therefore, nonzero coefficients in the expression for $\psi_k(\pi_{\eta I_0|\eta L})$
as a linear combination of elements from $\mathfrak{B}$ occur only at $\overline{\pi}_{I_0|M}$, where $M\leq L$ and the statement of the Lemma follows.
\end{proof}

\begin{pr}\label{rob-link2}
Assume that $\lambda$ is $(I|J)$-robust. 
If the simple $G$-supermodule $L_G(\lambda_{I|J})$ is a composition factor of $H^0_G(\lambda)$, then $\lambda \sim_{odd} \lambda_{I|J}$.

If $\lambda$ and $\lambda_{I|J}$ are both polynomial weights and the simple $G$-supermodule $L_G(\lambda_{I|J})$ is a composition factor of $\nabla(\lambda)$, then $\lambda \sim_{podd} \lambda_{I|J}$.
\end{pr}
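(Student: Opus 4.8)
The plan is to follow the proof of Proposition \ref{rob-link} almost verbatim, with Lemma \ref{surj} replaced by the sharper Lemma \ref{surj2}. By Proposition \ref{comp} it suffices to prove that if $\psi_k:S_{I|J}\to\overline{S}_{I|J}$ is not surjective, then $\la\sim_{odd}\la_{I|J}$; so assume non-surjectivity.

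First I would record a bookkeeping reduction. For $\eta\in\Sigma_k$ the numbers $a_{L,\eta}$ and $b_{L,\eta}$ of Lemma \ref{surj2} depend only on the value $t:=\eta(k)$, since $\eta$ permutes $\{1,\dots,k\}$; writing $a_{L,t}=|\{p:i_p=i_t,\ l_p<l_t\}|$ and $b_{L,t}=|\{p:l_p=l_t,\ i_p>i_t\}|$, one has $a_{L,\eta}=a_{L,t}$ and $b_{L,\eta}=b_{L,t}$ whenever $\eta(k)=t$. Hence, by Lemma \ref{surj2}, for each $L\in S(\la,I,J)$ and each $t\in\{1,\dots,k\}$ there is $\eta$ with $\eta(k)=t$ for which the matrix of $\psi_k:S_\eta\to\overline{S}_{I|J}$ is upper triangular (for the orders $<^\eta$ and $<$) with diagonal entry at $(\pi_{\eta I_0|\eta L},\overline{\pi}_{I_0|L})$ equal to $\pm\bigl(\omega_{i_t,l_t}(\la)-a_{L,t}+b_{L,t}\bigr)$. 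Exactly as in Proposition \ref{rob-link}, if for every $L\in S(\la,I,J)$ the set $O_L:=\{\omega_{i_t,l_t}(\la)-a_{L,t}+b_{L,t}\mid t=1,\dots,k\}$ contained a nonzero element, then I could pick for each $L$ a $t(L)$ realizing it, a permutation $\eta_L$ with $\eta_L(k)=t(L)$, and the even-primitive vector $w_L=\pi_{\eta_L I_0|\eta_L L}\in S_{I|J}$, whose image $\psi_k(w_L)$ is a nonzero multiple of $\overline{\pi}_{I_0|L}$ plus a combination of $\overline{\pi}_{I_0|M}$ with $M<L$; the vectors $w_L$ would then span a subspace on which $\psi_k$ has an invertible triangular matrix onto $\overline{S}_{I|J}$, contradicting non-surjectivity. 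Therefore some $L\in S(\la,I,J)$ has $O_L=\{0\}$, that is, $\omega_{i_t,l_t}(\la)=a_{L,t}-b_{L,t}$ for all $t$.

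The step I expect to be the main obstacle is extracting an explicit chain of simply-odd-links from this identity. I would reorder the pairs $(i_1,l_1),\dots,(i_k,l_k)$ as $(p_1,q_1),\dots,(p_k,q_k)$ so that within each fixed row (constant $i$) smaller column values come first and within each fixed column (constant $l$) larger row values come first. The essential point is that these two requirements are jointly satisfiable — sorting the pairs by the value $l-i$ in increasing order, ties broken arbitrarily, does it — and that admissibility of $(I_0|L)$ makes all the pairs distinct, the $l$-values distinct in each row, and the $i$-values distinct in each column, so the two counts come out exactly: with $(p_s,q_s)=(i_t,l_t)$ one has $|\{a<s:p_a=i_t\}|=a_{L,t}$ and $|\{a<s:q_a=l_t\}|=b_{L,t}$. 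Setting $\mu_0=\la$ and $\mu_s=\la_{(p_1\cdots p_s|q_1\cdots q_s)}=\la-\sum_{a\le s}\delta^+_{p_a}+\sum_{a\le s}\delta^-_{q_a}$, a short computation with the definition (\ref{2}) of $\omega_{ij}$ gives $\omega_{p_s,q_s}(\mu_{s-1})=\omega_{i_t,l_t}(\la)-a_{L,t}+b_{L,t}=0$, so $\mu_{s-1}\sim_{sodd}\mu_s$ for each $s$. Since the $(p_s,q_s)$ are a simultaneous rearrangement of the $(i_t,l_t)$ and $cont(I_0)=cont(I)$, $cont(L)=cont(J)$, we get $\mu_k=\la_{I|J}$; hence $\la=\mu_0\sim_{sodd}\cdots\sim_{sodd}\mu_k=\la_{I|J}$ and $\la\sim_{odd}\la_{I|J}$. (Examples \ref{ex5}, \ref{ex6} and \ref{ex7} are instances of exactly this construction.)

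For the polynomial statement I would use the $\nabla(\la)$-part of Proposition \ref{comp} to get non-surjectivity of $\psi_\mu$, run the same argument, and observe that the chain constructed above stays within polynomial weights: $(\mu_s)^+_i=\la^+_i-|\{a\le s:p_a=i\}|\ge\la^+_i-(\text{multiplicity of }i\text{ in }I)=(\la_{I|J})^+_i\ge0$, and the $GL(n)$-components only increase. Thus $\la\sim_{podd}\la_{I|J}$; as in Proposition \ref{rob-link}, this also follows from Corollary 7.2 of \cite{fm-p2}.
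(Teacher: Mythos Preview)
Your proof is correct and follows the same overall strategy as the paper: reduce via Proposition~\ref{comp} to non-surjectivity of $\psi_\mu$, use Lemma~\ref{surj2} and the collection argument to locate an $L\in S(\la,I,J)$ with $O_L=\{0\}$, and then read off a chain of simply-odd-linked weights.

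Where you differ is in the construction of the chain, and here your argument is in fact more careful than the paper's. The paper asserts the chain $\la\sim_{sodd}\la_{i_k|n_k}\sim_{sodd}\la_{i_ki_{k-1}|n_kn_{k-1}}\sim_{sodd}\cdots$, i.e.\ processes the pairs $(i_t,n_t)$ in decreasing order of $t$; but this does not actually give $(\kappa_t)^+_{i_t}=\la^+_{i_t}-a_{N,\eta}$ in general, since $a_{N,\eta}$ counts indices $u<t$ with $i_u=i_t$, not $u>t$ (and the paper's own Examples~\ref{ex5}--\ref{ex7} visibly require different orderings for different $N$). Your observation that sorting the pairs by $l-i$ simultaneously realizes the row condition (same $i$, smaller $l$ first) and the column condition (same $l$, larger $i$ first), and that only these relative positions matter for the counts $a_{L,t}$ and $b_{L,t}$, is exactly the point needed to make the chain work uniformly. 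This is the same idea the paper implements more carefully in the proof of Theorem~\ref{link} via the row-reading of $T^+$, so you have essentially anticipated that refinement.
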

\begin{proof}
We use Proposition \ref{comp}, and assume that the map $\psi_k:S_{I|J} \to \overline{S}_{I|J}$ is not surjective.

We create collections $C_N$ indexed by multi-indices $N=(n_1, \ldots, n_k)$ of content $\cont(J)$ such that $(I_0|N)$ is admissible.
The collection $C_N$ consists of vectors $\pi_{\eta I_0|\eta N}$ for all $\eta\in \Sigma_k$. According to Lemma \ref{surj2}, the images 
$\psi_k(\pi_{\eta I_0|\eta N})$ of elements in the collection $C_N$ expressed as a linear combination of vectors $\overline{\pi}_{I_0|L}$ have the property that 
the coefficient at $\overline{\pi}_{I_0|N}$ equals $\alpha_{\eta I_0|\eta N}$, and all other nonzero coefficients appear only at $\overline{\pi}_{I_0|L}$ for $L<N$.
To each collection $C_N$, we assign a set $O_N=\{\alpha_{\eta I_0|\eta N}|\eta\in \Sigma_k\}$.

It is a crucial observation that if every $O_N$ contains a nonzero element, then the map $\psi_k: S_{I|J} \to \overline{S}_{I|J}$ is surjective. 
This is because by Lemma \ref{surj2}, we can find a set of vectors $\pi_{K|L}$ of weight $\lambda_{I|J}$ such that the matrix of $\psi_k$ restricted on the span of these vectors is invertible.

Therefore, if $\psi_k: S_{I|J} \to \overline{S}_{I|J}$ is not surjective, then there is $N$ as above such that all elements of $O_N$ are equal to zero. 

For $t=1, \ldots, k$ denote $\kappa_t=\lambda_{i_k\cdots i_t|n_k\cdots n_t}$, and set $\kappa_{k+1}=\lambda$.
Fix $1\leq t \leq k$ and choose a permutation $\eta\in \Sigma_k$ such that $\eta(k)=t$. Since $(I_0|N)$ is admissible, 
we obtain $(\kappa_t)^+_{i_t}=\lambda^+_{i_t}-a_{N,\eta}$ and $(\kappa_t)^-_{n_t}=\lambda^-_{n_t}+b_{N,\eta}$, 
which implies  
\[\omega_{i_t,n_t}(\kappa_t)=\omega_{i_t,n_t}-a_{N,\eta}+b_{N,\eta}=(-1)^\eta\alpha_{\eta I_0,\eta N}=0\]
and $\kappa_{t+1}\sim_{sodd} \kappa_t$.
Therefore  $\lambda$ and $\lambda_{I|J}$ are odd-linked via the sequence
\[\lambda \sim_{sodd} \lambda_{i_k|n_k} \sim_{sodd} \lambda_{i_ki_{k-1}|n_kn_{k-1}} \sim_{sodd} \cdots \sim_{sodd} 
\lambda_{i_k\cdots i_1|n_k\cdots n_1}=\lambda_{I|J}.\]

The last statement now follows from Corollary 7.2 of \cite{fm-p2}.
\end{proof}

\section{Odd linkage for polynomial weights}\label{sec5}

The category of polynomial $G=GL(m|n)$-supermodules of degree $r$ is equivalent to the category of  supermodules over the Schur superalgebra $S_r=S(m|n,r)$.
In particular, if the length of the polynomial weight $\lambda$ is $r$, then the largest polynomial subsupermodule 
$\nabla(\lambda)$ of $H^0_G(\lambda)$ corresponds to the costandard supermodule $\nabla_{S_r}(\lambda)$ of $S_r$.

A weight $\lambda$ is called polynomial if $\nabla(\lambda)\neq 0$. It was established in \cite{br} that polynomial weights $\lambda$ of $GL(m|n)$ are indexed by $(m|n)$-hook partitions. A partition 
$\lambda=(\lambda_1, \ldots , \lambda_s)$ is a \emph {$(m|n)$-hook partition} if $\lambda_{m+1}\leq n$.

Throughout this section, we assume that $\lambda=(\lambda_1, \ldots , \lambda_s)$ is an $(m|n)$-hook partition of fixed length $r$. Such $\lambda$ correspond uniquely to a dominant and polynomial weight $(\lambda^+|\lambda^-)$ of $G$, where 
$\lambda^+=(\lambda_1,\ldots, \lambda_m)$ and $\lambda^-$ is the conjugate partition to 
$(\lambda_{m+1}, \ldots, \lambda_s)$. We use both notations interchangeably.
We also consider dominant weights $\mu$ of the form $\mu=\lambda_{I|J}$ for admissible $(I|J)$. It is easy to see that such weights $\mu$ correspond to $(m|n)$-hook partitions, and they are of the same length $r$ as $\lambda$.

In this section, we investigate the simple composition factors of the costandard module $\nabla(\lambda)$ for polynomial weight $\lambda$, or equivalently, the simple composition factor of the costandard module $\nabla_{S_r}(\lambda)$ 
for $(m|n)$-hook partition $\lambda$.

Since we could work over the Schur superalgebra $S_r$, we can apply combinatorial techniques involving partitions and tableaux.

\subsection{Review of concepts and notations from \cite{fm-p2}}
\subsubsection{Skew partitions and tableaux}

We review the most relevant concepts from \cite{fm-p2}. For further information, the reader is referred to the article \cite{fm-p2}.

The partial order $\leq$ on partitions is defined by $\beta\leq \alpha$ if and only if $\beta_i\leq \alpha_i$ for every component $i$. 

Let $\lambda=(\lambda^+|\lambda^-)$ be an $(m|n)$-hook partition, and a partition $\mu$ is such that $\mu<\lambda^+$. Denote by $T$ a skew tableau of shape $\lambda'/ \mu'$ satisfying the following conditions:

1) Its entries are from the set $m+1, \ldots, m+n$ such that its content is $(0|\nu)$, where $\nu$ is a partition.

2) For each $j>m$ and $1\leq i\leq n$, the entry at the position $[ij]$ of the tableau $T$ is $m+i$. 

These assumptions imply that $\lambda^-<\nu$. 
Tableau $T$, as above, consists of two parts. The first part $T^+$, is a skew tableau of the shape $(\lambda^+/ \mu)'$ and the content $(0|\nu / \lambda^-)$.
The second part is the canonical row tableau $T^-_{can}$ corresponding to the diagram $[\lambda^-]$, where its  $i$-th row is filled with entries $m+i$ for each $i$. 

Denote by $T^{opp}$ a tableau of the shape $\nu$ and the content $(\lambda^+ /\mu|\lambda^-)$ defined as follows:

1) For each $1\leq i\leq n$ and $1\leq j\leq \lambda^-_i$, the entry at the position $[ij]$ of the tableau $T^{opp}$ is equal to $m+i$. 

2) The remaining entries are from the set $1, \ldots, m$.

The tableau $T^{opp}$ consists of two parts. The first part is the canonical row tableau $T^-_{can}$ of shape $\lambda^-$. The second part is a skew tableau $T^-$ of the shape $\nu/ \lambda^-$ and the content
$(\lambda^+ /\mu|0)$. 

Denote by $\mathcal{D}^+$ the diagram $[\lambda^{+'}/ \mu']$ corresponding to $T^+$ and by $\mathcal{D}^-$ the diagram $[\nu/ \lambda^-]$ corresponding to $T^-$.

To a skew tableau $Q$ we assign a word $w = w(Q)$ that is given by reading and concatenating entries in its rows from right to left starting in the top row and proceeding to the bottom row.

Define the words $w=w(T^+)$ and $z=w(T^+_{can})$, where $T^+_{can}$ is the canonical tableau of the same skew shape as $T^+$ such that its $j$th column is filled with entries $j$ for each $j$. 
We will use the following correspondence $Rp$ between tableaux $T^+$ and $Rp(T^+)=T^-$:
If the $t$th symbol $w_t=m+i$ appears in $w$ for the $j$th time, then 
$t^-_{i, \lambda^-_i+j}=z_t$. If the symbol $w_t$ as above appears at the position $[r_t,s_t]$ in $\mathcal{D}^+$, 
then we define a bijective map $Rpos:\mathcal{D}^+\to \mathcal{D}^-$ such that 
$Rpos([r_t,s_t])=[i,\lambda^-_i+j]$ for each $t=1, \ldots, r$. Corresponding to this, each permutation $\sigma$ of diagram $\mathcal{D}^-$ induces a permutation $Rp(\sigma)$ of $\mathcal{D}^+$.

\subsubsection{Operator $\sigma$.}\label{tensorsetup}
Assume $(I|J)$ is of length $r$, $cont(I)=(\lambda^+)'-\mu'$ and $\cont(J)=\nu-\lambda^-$.
Then there is a bijective positioning map $P^+:\{1, \ldots, r\} \to \mathcal{D}^+$ which satisfies the property that 
$P^+(l)=[s,r]$ implies $r=i_l$. 
To $(I|J)$ we asssign the tableau $T^+$ of shape $(\lambda^+\setminus \mu)'$  and content $(0,\nu\setminus \lambda^-)$  via $T^+[P^+(l)]=j_l$ for each $l=1, \ldots, r$.

Let $X^+$ be the subgroup of the symmetric group $\Sigma_r$ consisting of row permutations of $\mathcal{D}^+$.
For $\sigma\in X^+$ denote by $\sigma(T^+_{can})$, the tableau obtained by applying permutation $\sigma$ to the entries of $T^+_{can}$.
The action of $\sigma\in X^+$ on $(I|J)$ is given as $\sigma.(I|J)=(K|J)$, where for each $1\leq l\leq k$ the index $k_l$ is the entry at the position $P^{+}(l)$ 
 in $\sigma(T^+_{can})$. The operator $\sigma^+$ is given as 
\[\sigma^+=\sum_{\sigma\in X^+} (-1)^\sigma \sigma.\]

Analogously, there is a bijective positioning map $P^-:\{1, \ldots, r\}\to \mathcal{D}^-$ satisfying the property that 
$P^-(l)=[r,s]$ implies $m+r=j_l$.
To $(I|J)$ we assign the tableau $T^-$ of shape $\nu\setminus \lambda^-$ and content $(\lambda^+\setminus \mu)'$   via $T^-[P^+(l)]=j_l$ for each $l=1, \ldots, r$.
Let $X^-$ be the subgroup of the symmetric group $\Sigma_r$ consisting of column permutations of $\mathcal{D}^-$.
For $\sigma\in X^-$ denote by $\sigma(T^-_{can})$, the tableau obtained by applying permutation $\sigma$ to the entries of $T^-_{can}$. 
The action of  $\sigma\in X^-$ on $(I|J)$ is given as $\sigma.(I|J)=(I|L)$, where for each $1\leq a\leq k$, the index $l_a$ is the entry at the position 
$P^{-}(a)$ in $\sigma(T^-_{can})$.
The operator $\sigma^-$ is given as 
\[\sigma^-=\sum_{\sigma\in X^-} (-1)^\sigma \sigma.\]

Finally, the complete operator $\sigma$ is defined as a composition $\sigma^-\circ \sigma^+$.

This action of $\sigma$ on multi-indices $(I|J)$ extends to the action on bideterminants - see pp. 390-391 of \cite{fm-p2} - and on $\rho_{I|J}$. One of  the main results of \cite{fm-p2}, Theorem 4.3, implies that elements $\pi_{I|J}=v_{I|J}\sigma\rho_{I|J}$ belong to $S_{I|J}$.

\subsubsection{Operator $\tau$}
To a tableau $Q^+$ of shape $(\lambda^+\setminus \mu)'$ and content $(0| \nu\setminus \lambda^-)$, one can find an element $(I|J)$ such that the tableau $T^+$ we assigned to $(I|J)$ earlier equals $Q^+$. Depending on the choice of the positioning map $P^+$, there are different elements $(K|L)$ that produce the same tableau $Q^+$.  However, in each case, $\overline{\rho}_{K|L}=\pm \overline{\rho}_{I|J}$.

Analogously, to a tableau $Q^-$ of shape $\nu\setminus \lambda^-$ and content $(\lambda^+\setminus \mu|0)$, 
one can find an element $(I|J)$ such that the tableau $T^-$ assigned to $(I|J)$ earlier equals $Q^-$. Depending on the  positioning map $P^-$, there are different elements $(K|L)$ that produce the same tableau $Q^-$.  
However, in each case $\overline{\rho}_{K|L}=\pm \overline{\rho}_{I|J}$. 

Therefore, when we define $\tau^+(Q^+)=\sum_{\sigma\in X^+} \sigma Q^+$ and 
$\tau^-(Q^-)=\sum_{\sigma\in X^-} \sigma Q^-$, 
the expressions
$\overline{\rho}(\tau^+ Q^+)=\sum_{\sigma\in X^+} \overline{\rho}(\sigma Q^+)$
and 
$\overline{\rho}(\tau^- Q^-)=\sum_{\sigma\in X^-} \overline{\rho}(\sigma Q^-)$ are well-defined up to a constant $\pm 1$. The operators $\overline{\rho}\tau^+$ and $\overline{\rho}\tau^-$ are viewed as a tableau analog of operators $\rho\sigma^+$ and $\rho\sigma^-$.

The biggest challenge is the combination of operators $\overline{\rho}\tau^+$ and $\overline{\rho}\tau^-$ for a single tableau $Q^+$. We have already assigned to $Q^+$ the tableau $Q^-=Rp(Q^+)$ earlier, and for $\sigma\in X^-$ we define $\sigma Q^+=Q^+\circ Rp(\sigma)$. This extends naturally to the definition of the operator $\tau$ such that 
$\overline{\rho}(\tau Q^+)= \overline{\rho}(\tau^+\tau^- Q^+)$. 

\subsubsection{Vectors $\overline{v}(Q^+)$.}

Theorem 5.2 of \cite{fm-p2} asserts that for a tableau $Q^+$ of shape $(\lambda^+\setminus \mu)'$ and content 
$(0|\nu\setminus \lambda^-)$ the expression $\overline{v}(Q^+)=v_{I|J}\overline{\rho}(\tau Q^+)$ is a $G_{ev}$-primitive vector of $H^0_G(\lambda)\otimes \wedge(Y)$; hence it belongs to $\overline{S}_{I|J}$.

It is essential that the specific repositioning map $Rp$, which sends $Q^+$ to $Q^-$, which we have chosen earlier, behaves well with respect to Yamanouchi words and Littlewood-Richardson tableaux - See section 5.3 and 6.2 of \cite{fm-p2}.

Recall that a tableau $Q$ is called {\em semistandard} if its entries in each row are weakly increasing from left
to right, and all entries in each column are strictly increasing from top to bottom.

We call $Q$ {\em anti-semistandard} if the entries in its rows are strictly decreasing from left to right, and entries in its columns are weakly decreasing from top to bottom.

If $Q^+$ is semistandard tableau  and the corresponding tableau $Q^-$ is anti-semistandard,
then $Q^+$ is called {\em marked}. The set of all marked tableaux $Q^+$ of the shape $(\lambda^+\setminus \mu)'$ 
and the content $(0|\nu \setminus \lambda^-)$ is denoted by $M(((\lambda^+)'\setminus \mu',\nu\setminus \lambda^-)$.

\subsection{Even-primitive vectors}

In the general case, we still work with the vectors $\pi_{I|J}$ and rely on the formulas for $\psi_k(\pi_{I|J})$ derived in the previous sections. 
The following approach works for arbitrary $\lambda_{I|J}$ and essentially reduces the general case to the case when $\lambda$ is $(I|J)$-robust.

Since $\pi_{I|J}$ do not necessarily belong to $T_k$, we use 
Theorem 4.3 of \cite{fm-p2} to obtain suitable even-primitive vectors of $T_k$ that are integral linear combinations of vectors $\pi_{K|L}$. 
These vectors belong to $S_{I|J}$ and are of the form $v_{I|J}\sigma.\rho_{I|J}$

On the other hand, by Theorem 7.1 of \cite{fm-p2} (see also Theorem 6.24 of \cite{fm-p2}), the set 
$\mathfrak{B}=\{\overline{v}(T^+)|T^+\in M({\lambda^+}'/\mu',\nu/\omega)\}$ form a basis of the space $B$ of all even-primitive vectors in 
$\nabla_k(\lambda)=\nabla(\lambda)\cap F_k$ of weight $(\mu|\nu)$.

Let us fix an arbitrary reading of all tableaux $T^+$ of the skew shape ${\lambda^+}'/\mu'$. Corresponding to such a reading, we assign two words. The word $J_{T^+}$ is obtained by listing the entries $j_t$ in the same order as the symbols $m+j_t$ are read. If the reading of the $t$th symbol of $T^+$ appears at the location $(k_t,i_t)\in \mathcal{D}^+$, then we define
$I_0=(i_1, \ldots, i_k)$. Note that $I_0$ is the same for all tableaux $T^+$ since it only depends on the fixed reading, but the entries of $I_0$ are no longer nondecreasing 
(as was the case earlier). 

List tableux $T^+\in M({\lambda^+}'/\mu',\nu/\omega)$ in a certain order $T^+_1<\ldots <T^+_l$. Corresponding to 
this, we list the elements of $\mathfrak{B}$ as $\mathfrak{B}=\{\overline{v}(T^+_1), \ldots, \overline{v}(T^+_l)\}$ 
and write $J^s=J_{T^+_s}=(j^s_1, \ldots, j^s_k)$  for $s=1, \ldots, l$.

For a permutation $\eta\in \Sigma_k$, define $\mathfrak{B}_\eta=\{\sigma\pi_{\eta I_0|\eta J^s}|s=1, \ldots, l\}$ and $B_\eta=span(\mathfrak{B}_\eta)$.
The image of the element $\sigma\pi_{\eta I_0|\eta J^s}$ from $\mathfrak{B}_\eta$ under the map $\proj: K(m|n)\otimes Y^{\otimes k} \to K(m|n)\otimes \wedge^k Y$ 
is $\overline{v}(T^+_s)=\sigma \overline{\pi}_{I_0|J^s}$.
Since $\psi_k$ is the $G_{ev}$-morphism by Lemma \ref{evmorph}, the restriction of $\psi_k$ to $B_\eta$ gives a $G_{ev}$-morphism from $B_\eta$ to $B$.
Since $\mathfrak{B}_\eta$ is a basis of $B_\eta$ and $\mathfrak{B}$ is a basis of $B$, we can consider the matrix $M_\eta$ of $\psi_k$ for the bases
$\mathfrak{B}_\eta$ and $\mathfrak{B}$.

For $T^+\in M({\lambda^+}'/\mu',\nu/\omega)$ we define the set $\mathfrak{C}_{T^+}=\{\sigma\pi_{\eta I_0|\eta J_{T^+}}| \eta\in \Sigma_k\}$ and denote by 
$C_{T^+}$ the $F$-span of elements in $\mathfrak{C}_{T^+}$.

\subsection{Case of $\lambda_{I|J}$ with distinct entries in $I$ and distinct entries in $J$}

Before dealing with the general case, we describe the case when all entries in $I$ are distinct, and all entries in $J$ are distinct. In this case, the arguments are much simpler, and we can avoid the full machinery of marked tableaux, pictures, and their properties.

Marked tableaux are easy to understand in this case. A tableau $T^+$ belongs to $M({\lambda^+}'/\mu',\nu/\omega)$ if and only if entries in all rows of $T^+$ are strictly increasing and entries in all columns of $T^-$ are 
strictly decreasing. 

Denote by $\overline{T^+}$ the row tabloid corresponding to $T^+$.

\begin{lm}\label{disjoint}
Every $\overline{v}(T^+)\in \mathfrak{B}$, where $T^+\in M({\lambda^+}'/\mu',\nu/\omega)$, is an integral linear combination of various $\overline{\pi}_{I_0|L}$, where $\cont(L)=\cont(J)$. 
Each term $\overline{\pi}_{I_0|L}$ can appear with a non-zero coefficient in at most one $\overline{v}(T^+)\in \mathfrak{B}$.
\end{lm}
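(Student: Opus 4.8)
The plan is to read off both assertions from the explicit form of $\overline{v}(T^+)$ recorded in \cite{fm-p2}, the combinatorial heart of the matter being that $\overline{v}(T^+)$ is produced from the single monomial $\overline{\pi}_{I_0|J_{T^+}}$ by an operator that only permutes the entries lying in a common row of the diagram $\mathcal{D}^+$ of ${\la^+}'/\mu'$.

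First I would recall from Section 6 of \cite{fm-p2} that $\overline{v}(T^+)=\sigma.\overline{\pi}_{I_0|J_{T^+}}$, where $I_0$ is the index word attached to the fixed reading of $\mathcal{D}^+$ (hence the same for all $T^+$), $J_{T^+}$ is the reading word of $T^+$, and $\sigma$ is the operator of subsection 4.2 of \cite{fm-p2}. Via the correspondence between $\sigma$ and $\tau$ from subsections 4.2 and 5.2 of \cite{fm-p2}, $\sigma$ is realised, up to signs, by an element $\sum_{w\in W}c_{w}\,w$ of the integral group algebra $\mathbb{Z}[W]$, where $W\le\Sigma_k$ is the Young subgroup whose factors are the symmetric groups on the sets of positions $t$ lying in one common row of $\mathcal{D}^+$ (equivalently, sharing the value $\la^+_{i_t}$), acting by permuting the tensor/wedge factors. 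Applying such an element to $\overline{\pi}_{I_0|J_{T^+}}$ merely permutes the entries of $J_{T^+}$, so $\overline{v}(T^+)=\sum_{w\in W}c_{w}\,\overline{\pi}_{I_0|w.J_{T^+}}$ with all $c_{w}\in\mathbb{Z}$; since reordering $(I_0|w.J_{T^+})$ into admissible form introduces only signs and $cont(w.J_{T^+})=cont(J_{T^+})=cont(J)$, this is an integral linear combination of vectors $\overline{\pi}_{I_0|L}$ with $cont(L)=cont(J)$, which is the first assertion.

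For the second assertion, the formula of the previous paragraph shows that every $L$ occurring in $\overline{v}(T^+)$ lies in the single orbit $W.J_{T^+}$, so it suffices to check that the orbits $W.J_{T^+}$ are pairwise disjoint as $T^+$ ranges over $M({\la^+}'/\mu',\nu/\omega)$. A word $L=(l_1,\dots,l_k)$ with $cont(L)=cont(J)$ is, the entries of $J$ being distinct, a permutation of $J_0=\{j_1<\dots<j_k\}$, and the fixed reading turns it into a filling $U_L$ of $\mathcal{D}^+$ by placing $l_t$ into the $t$-th cell read; two such words lie in the same $W$-orbit exactly when $U_L$ and $U_{L'}$ carry the same multiset of entries in each row, and because the entries are distinct, each $W$-orbit contains a unique filling that is strictly increasing along every row. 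By the description of marked tableaux recalled earlier, every $T^+\in M({\la^+}'/\mu',\nu/\omega)$ is strictly increasing along rows and satisfies $U_{J_{T^+}}=T^+$, so $T^+$ is precisely that unique representative of the orbit $W.J_{T^+}$; since the reading visits every cell of $\mathcal{D}^+$, the assignment $T^+\mapsto J_{T^+}$ is injective, hence distinct marked tableaux give distinct orbits, and no $\overline{\pi}_{I_0|L}$ can occur in two different $\overline{v}(T^+)$.

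The step I expect to be the real obstacle is the input borrowed from \cite{fm-p2}: the identification of $\sigma$ (equivalently $\tau$) with an element of the integral group algebra of the row-subgroup $W$ of $\mathcal{D}^+$, i.e.\ the fact that passing from $\overline{\pi}_{I_0|J_{T^+}}$ to $\overline{v}(T^+)$ never moves an entry out of its row and uses integer coefficients only. Granting this localisation, the rest is the elementary orbit bookkeeping above; one should also note in passing that any cancellation can only shrink the support of $\overline{v}(T^+)$, and since that support already lies inside the single orbit $W.J_{T^+}$, the disjointness conclusion is not affected.
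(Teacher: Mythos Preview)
Your argument hinges on the claim that the operator building $\overline{v}(T^+)$ from $\overline{\pi}_{I_0|J_{T^+}}$ lies in the integral group algebra of the \emph{row} subgroup $W$ of $\mathcal{D}^+$, i.e.\ that it never moves an entry of $T^+$ out of its row. This is not what the construction in \cite{fm-p2} gives. The operator $\tau$ factors as $\tau^+\tau^-$, where $\tau^+$ is indeed a row operator on $\mathcal{D}^+$, but $\tau^-$ is built from \emph{column} permutations of the diagram $[\nu/\omega]$ of $T^-$. As the paper itself records just before its proof, entries from a single column of $T^-$ correspond to entries of $T^+$ lying in \emph{different} rows of $\mathcal{D}^+$; hence $\tau^-$ genuinely moves entries across rows. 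Your Example~\ref{ex4} check succeeds only because there $\la^-_1>\la^-_2>\la^-_3$, so every column of $[\nu/\omega]$ is a singleton and $\tau^-$ is trivial; as soon as two values $\la^-_{j_a}=\la^-_{j_b}$ coincide, the support of $\overline{v}(T^+)$ escapes the single $W$-orbit $W.J_{T^+}$ and your disjointness argument collapses.

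The paper's proof exploits exactly this factorisation $\tau=\tau^+\tau^-$ in the other direction. A common summand in $\overline{v}(T^+_1)$ and $\overline{v}(T^+_2)$ forces, after stripping off $\tau^+$, the existence of $R_1=\sigma_{-,1}T^+_1$ and $R_2=\sigma_{-,2}T^+_2$ (with $\sigma_{-,i}$ column permutations of $[\nu/\omega]$) having the same row tabloid. Because the entries of $J$ are distinct, each symbol $m+j_l$ sits in the same row of $R_1$ and $R_2$; then the defining property of marked tableaux that columns of $T^-$ are strictly decreasing pins down $\sigma_{-,1}=\sigma_{-,2}$ and forces $\overline{T^+_1}=\overline{T^+_2}$, hence $T^+_1=T^+_2$ by semistandardness. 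The essential combinatorial input you are missing is this use of the $T^-$ side (strictly decreasing columns), which has no analogue in a purely $W$-orbit picture on $\mathcal{D}^+$.
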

\begin{proof}
The first statement follows from Theorem 7.1 of \cite{fm-p2}. 
For the second part, first observe that since every $T^+\in M({\lambda^+}'/\mu',\nu/\omega)$ is semistandard, it is uniquely described by the content vectors of each row 
in the tableau $T^+$, or equivalently by its row tabloid $\overline{T^+}$. 

If $T^+\in M({\lambda^+}'/\mu',\nu/\omega)$, then every tableau appearing in $\tau^- T$ is obtained by permuting entries of $T^+$ that correspond to entries from the same columns of $T^-$ 
(and same columns of the diagram $[\nu/ \omega]$).

Additionally, entries from the same column of $T^-$ correspond to entries in $T^+$ that belong to different rows of $T^+$ (and different rows of the skew diagram
$[{\lambda^+}'/ \mu']$.)  

Assume that $T^+_1, T^+_2 \in M({\lambda^+}'/\mu',\nu/\omega)$ are such that there is a common summand in $\overline{v}(T^+_1)$ and $\overline{v}(T^+_2)$. Then there is a common summand in 
$\overline{\rho}(\tau T^+_1)$ and $\overline{\rho}(\tau T^+_2)$ and, consequently, a common summand in $\tau T^+_1$ and $\tau T^+_2$.
In this case, there is a summand $R_1$ in $\tau^-T^+_1$ and a summand $R_2$ in $\tau^-T^+_2$ such that the row tabloids $\overline{R_1}$ and $\overline{R_2}$ coincide. Let $R_1=\sigma_{-,1}T^+_1$ and $R_2=\sigma_{-,2}T^+_2$, where $\sigma_{-,1}, \sigma_{-,2}$ are column permutations of $[\nu/\omega]$.
Then every entry $m+j_l$ appears in the same row in $R_1$ as in $R_2$. Since entries in $J$ are distinct, and entries in columns of $T^-_1, T^-_2$ are strictly decreasing, we conclude that $\sigma_{-,1}=\sigma_{-,2}$ and $\overline{T^+_1}=\overline{T^+_2}$. Since both $T^+_1, T^+_2$ are semistandard, this implies $T^+_1=T^+_2$.
\end{proof}

To each set $\mathfrak{C}_s=\mathfrak{C}_{T^+_s}$, we assign a set $O_s=O_{T^+_s}=\{\omega_{i_{\eta(k)},j^s_{\eta(k)}}|\eta\in\Sigma_k\}= \{\omega_{i_t,j^s_t}|t=1, \ldots k\}$.

The following example, related to Example \ref{ex1}, illustrates this setup.
To compute the entries in the appearing matrices, we use Proposition \ref{tool}.

\begin{ex}\label{ex4}
Let $m=n=3$ and a weight $\lambda$ be such that $\lambda^+_1=\lambda^+_2>\lambda^+_3$ and $\lambda^-_1>\lambda^-_2>\lambda^-_3$. Assume that $I=J=(123)$. 
Then in the notation of Example \ref{ex1}, 
\[\mathfrak{B}=\{\overline{v}(T^+_1)=\vec{b}_1+\vec{b}_2,\overline{v}(T^+_2)=\vec{b}_3+\vec{b}_4,\overline{v}(T^+_3)=\vec{b}_5+\vec{b}_6\}\]
is a basis of all even-primitive vectors of weight $\lambda_{I|J}$ in $\nabla_k(\lambda)$.

The sets $\mathfrak{C}_s$ are given as follows.
\[\begin{aligned}&\mathfrak{C}_{1}=\{\vec{a}_1-\vec{a}_8, \vec{a}_7-\vec{a}_2, \vec{a}_{13}-\vec{a}_{26}, \vec{a}_{19}-\vec{a}_{32}, \vec{a}_{25}-\vec{a}_{14}, \vec{a}_{31}-\vec{a}_{20}\}, \\
&\mathfrak{C}_{2}=\{\vec{a}_3-\vec{a}_{10}, \vec{a}_9-\vec{a}_4, \vec{a}_{15}-\vec{a}_{28}, \vec{a}_{21}-\vec{a}_{34}, \vec{a}_{27}-\vec{a}_{16}, \vec{a}_{33}-\vec{a}_{22}\}, \\
&\mathfrak{C}_{3}=\{\vec{a}_5-\vec{a}_{12}, \vec{a}_{11}-\vec{a}_6, \vec{a}_{17}-\vec{a}_{30}, \vec{a}_{23}-\vec{a}_{36}, \vec{a}_{29}-\vec{a}_{18}, \vec{a}_{35}-\vec{a}_{24}\}.
\end{aligned}\] 
The assumption $\lambda^+_1=\lambda^+_2$ implies $\omega_{1j}=\omega_{2j}+1$. Using this, we evaluate the following matrices. 

The matrix of $\psi_k$ for the bases $\mathfrak{C}_1$ and $\mathfrak{B}$ is given as
\[\begin{pmatrix}
\omega_{33}&-\omega_{33}&-\omega_{12}&\omega_{12}&\omega_{21}&-\omega_{21}\\
0&0&0&0&0&0\\
0&0&0&0&0&0
\end{pmatrix},\]
the matrix of $\psi_k$ for the bases $\mathfrak{C}_2$ and $\mathfrak{B}$ is given as
\[\begin{pmatrix} 
-1&1&-1&1&0&0\\
\omega_{32}&-\omega_{32}&-\omega_{13}&\omega_{13}&\omega_{21}&-\omega_{21}\\
0&0&0&0&0&0
\end{pmatrix}\]
and the matrix of $\psi_k$ for the bases $\mathfrak{C}_3$ and $\mathfrak{B}$ is given as
\[\begin{pmatrix}
-1&1&-1&1&0&0\\
-1&1&0&0&1&-1\\
\omega_{31}&-\omega_{31}&-\omega_{13}&\omega_{13}&\omega_{22}&-\omega_{22}
\end{pmatrix}.\]

Therefore we have sets
\[O_1=\{\omega_{33}, \omega_{12}, \omega_{21}\}, 
O_2=\{\omega_{32}, \omega_{13}, \omega_{21}\}, 
O_3=\{\omega_{31}, \omega_{13}, \omega_{22}\}\]
and if $\psi_k$ is not surjective, then at least one of $O_1, O_2, O_3$ contains only zeros and therefore
$\lambda\sim_{odd}\lambda_{I|J}$.
\end{ex}

It is helpful to keep in mind the above example in the following general consideration.

Next, we replace an arbitrary order on marked tableaux by a specific one. 
The reverse Semitic lexicographic order $<$ on multi-indices $L$ of the same content as $J$ extends to a linear order on tableaux $T^+$ and on vectors $\overline{\pi}_{I_0|J_{T^+}}$. 
List the elements of $M({\lambda^+}'/\mu',\nu/\omega)$ with respect to this order $<$ as
$T^+_1< T^+_2 <\ldots < T^+_l$. 

Then we have 
\[\overline{\pi}_{I_0|J^1}=\mathfrak{l}(T^+_1)< \overline{\pi}_{I_0|J^2}=\mathfrak{l}(T^+_2) < \ldots < \overline{\pi}_{I_0|J^l}=\mathfrak{l}(T^+_l).\]
This induces the order $<^\eta$ on elements of $\mathfrak{B}_\eta$ such that $\sigma\pi_{\eta I_0|\eta J_{T^+_a}}<^\eta \sigma\pi_{\eta I_0|\eta J_{T^+_b}}$ if and only if $a<b$.

If $T^+\in M({\lambda^+}'/\mu',\nu/\omega)$, then the summands in $\overline{v}(T^+)$ with respect to $<$ are such that $\mathfrak{l}(T^+)=\overline{\pi}_{I_0|J_{T^+}}$ is the highest term in $\overline{v}(T^+)$. Therefore we call $\overline{\pi}_{I_0|J_{T^+}}$ the leading element of $\overline{v}(T^+)$.

\begin{lm}\label{surj3}
Assume that all entries in $I$ are distinct, all entries in $J$ are distinct, $\eta\in\Sigma_k$ and $T^+\in M({\lambda^+}'/\mu',\nu/\omega)$. 
The matrix of the map $\psi_k:B_\eta\to B$ for the bases $\mathfrak{B}_\eta$ ordered by $<^\eta$, and $\mathfrak{B}$ ordered by $<$, is an upper-triangular matrix. 
Its diagonal entry corresponding to 
$\sigma\pi_{\eta I_0|\eta J_{T^+_s}}$ and $\overline{v}(T^+_s)$ is $(-1)^\eta\omega_{i_{\eta(k)}, j^s_{\eta(k)}}$.
\end{lm}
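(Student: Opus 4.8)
The plan is to run the argument of the proof of Lemma~\ref{surj2} in the present, tableau-theoretic, setting, with Lemma~\ref{disjoint} taking over the role played there by Lemmas~4.2--4.3 of~\cite{fm-p1}: the former guarantees that each monomial $\overline{\pi}_{I_0|L}$ occurs in at most one basis vector $\overline{v}(T^+)$, which is exactly what lets us read matrix coefficients off from $\overline{\pi}$-expansions.

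First I would make $\psi_k(\sigma\pi_{\eta I_0|\eta J^s})$ explicit. By Theorem~4.3 of~\cite{fm-p2} the vector $\sigma\pi_{\eta I_0|\eta J^s}$ is an integral linear combination of vectors $\pi_{K|L}$ with $cont(K|L)=cont(I|J)$, and Proposition~\ref{tool} applies to each such $\pi_{K|L}$: its formula is purely combinatorial in the indices, hence valid for arbitrary (not necessarily admissible) reorderings once one normalizes through the identity $\overline{\pi}_{\zeta I_0|\zeta L}=(-1)^{\zeta}\overline{\pi}_{I_0|L}$, exactly as was done for $\pi_{\eta I_0|\eta L}$ in the proof of Lemma~\ref{surj2}. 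Collecting the outputs, $\psi_k(\sigma\pi_{\eta I_0|\eta J^s})$ becomes a linear combination of vectors $\overline{\pi}_{I_0|L}$ with $cont(L)=cont(J)$, and by Lemma~\ref{disjoint} each $\overline{\pi}_{I_0|L}$ lies in at most one $\overline{v}(T^+_r)\in\mathfrak{B}$, so the $\mathfrak{B}$-coordinates of $\psi_k(\sigma\pi_{\eta I_0|\eta J^s})$ are determined by this expansion.

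Next I would locate the leading term with respect to the order $<$. The identity column permutation in $\sigma$, together with the diagonal term of Proposition~\ref{tool}, contributes $\omega_{i_{\eta(k)},j^s_{\eta(k)}}\overline{\pi}_{\eta I_0|\eta J^s}=(-1)^{\eta}\omega_{i_{\eta(k)},j^s_{\eta(k)}}\overline{\pi}_{I_0|J^s}$; because all entries of $I$ and all entries of $J$ are distinct, no transposition term of Proposition~\ref{tool} can reproduce $\overline{\pi}_{\eta I_0|\eta J^s}$, so this is the whole contribution at $\overline{\pi}_{I_0|J^s}$. Every remaining contribution is strictly below $\overline{\pi}_{I_0|J^s}$: the non-identity column permutations in $\sigma$ only produce $J$-words strictly smaller than $J^s$ (this is the straightening property of $\sigma$ from subsection~4.2 of~\cite{fm-p2}, reflected in the fact that $\overline{\pi}_{I_0|J^s}=\mathfrak{l}(T^+_s)$ is the highest term of $\overline{v}(T^+_s)=\sigma\overline{\pi}_{I_0|J^s}$), and for each summand the surviving transposition terms of Proposition~\ref{tool} carry a $J$-word strictly below that of the summand by exactly the inequality analysis used in the proof of Lemma~\ref{surj2}. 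Thus, in its $\overline{\pi}_{I_0|L}$-expansion, $\psi_k(\sigma\pi_{\eta I_0|\eta J^s})$ involves only $L\le J^s$, with coefficient $(-1)^{\eta}\omega_{i_{\eta(k)},j^s_{\eta(k)}}$ at $L=J^s$.

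To conclude, recall $\mathfrak{l}(T^+_1)<\ldots<\mathfrak{l}(T^+_l)$ with $\mathfrak{l}(T^+_r)=\overline{\pi}_{I_0|J^r}$. Since $\psi_k$ maps $B_\eta$ into $B$ (Lemma~\ref{evmorph}) and, by the previous step, $\psi_k(\sigma\pi_{\eta I_0|\eta J^s})$ contains no $\overline{\pi}_{I_0|L}$ with $L>J^s$, it must be a combination of those $\overline{v}(T^+_r)$ with $\mathfrak{l}(T^+_r)\le\mathfrak{l}(T^+_s)$, i.e.\ with $r\le s$; and its $\overline{v}(T^+_s)$-coordinate, equal to the coefficient of the leading term $\overline{\pi}_{I_0|J^s}$, is $(-1)^{\eta}\omega_{i_{\eta(k)},j^s_{\eta(k)}}$. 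This shows the matrix of $\psi_k\colon B_\eta\to B$ is upper-triangular with the stated diagonal. The delicate point I expect to be the main obstacle is the bookkeeping in the previous paragraph: one must check that composing the downward straightening of $\sigma$ with the transposition terms of Proposition~\ref{tool}, and passing through the sign normalization $\overline{\pi}_{\zeta I_0|\zeta L}=(-1)^{\zeta}\overline{\pi}_{I_0|L}$, can never create an $\overline{\pi}_{I_0|L}$ with $L>J^s$; this is precisely where the hypotheses that the entries of $I$ are distinct and the entries of $J$ are distinct enter, keeping the whole computation parallel to that of Lemma~\ref{surj2}.
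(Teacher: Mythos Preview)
Your proposal is correct and follows essentially the same route as the paper's proof: both arguments expand $\sigma\pi_{\eta I_0|\eta J^s}$ into its $\pi_{K|M}$-summands, observe that each such summand has $\overline{\pi}_{K|M}\le\overline{\pi}_{I_0|J^s}$ (the straightening property of $\sigma$), then apply Proposition~\ref{tool}/Lemma~\ref{surj} to each summand to see that only $\overline{\pi}_{I_0|N}$ with $N\le J^s$ appear, with coefficient $(-1)^\eta\omega_{i_{\eta(k)},j^s_{\eta(k)}}$ at $N=J^s$, and finally invoke Lemma~\ref{disjoint} to translate this into the $\mathfrak{B}$-basis. The only cosmetic difference is that you cite Lemma~\ref{surj2} for the transposition analysis while the paper cites Lemma~\ref{surj}; since all entries of $I$ and $J$ are distinct here, $a_{L,\eta}=b_{L,\eta}=0$ and the two lemmas coincide, so the ``delicate bookkeeping'' you flag is already handled by the paper's observation that $\psi_k$ applied to any summand strictly below $\pi_{\eta I_0|\eta J^s}$ stays strictly below $\overline{\pi}_{I_0|J^s}$.
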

\begin{proof}
If $\pi_{K|M}$ is a summand of an element $\sigma \pi_{\eta I_0|\eta J_{T^+}}$ from $\mathfrak{B}_\eta$, then 
$\overline{\pi}_{K|M}\leq \overline{\pi}_{I_0,J_{T^+}}= \mathfrak{l}(T^+)$.

Using Lemma \ref{surj}, we derive that if the coefficient at $\overline{\pi}_{I_0|N}$ in the linear combination expressing $\psi_k(\pi_{K|M})$ is nonzero, 
then $N\leq J_{T^+}$. Therefore, if the coefficient at $\overline{\pi}_{I_0|N}$ in the linear combination expressing $\psi_k(\sigma \pi_{\eta I_0|\eta J_{T^+}})$ is nonzero, 
then $N\leq J_{T^+}$.

Moreover, the expression $\pi_{\eta I_0|\eta J_{T^+}}$ is the only summand in $\sigma\pi_{\eta I_0|\eta J_{T^+}}$ such that its image under $\psi_k$ has a nonzero coefficient, 
namely $(-1)^\eta$, at $\overline{\pi}_{I_0|J_{T^+}}$.

Therefore, Lemmas \ref{disjoint} and \ref{surj} imply that if we express $\psi_k(\sigma\pi_{\eta I_0|\eta J_{T^+_s}})$ as a linear combination of elements $\overline{v}(T^+_t)$ for 
$T^+_t\in M({\lambda^+}'/\mu',\nu/\omega)$, then the coefficient at $\overline{v}(T^+_s)$ is $(-1)^\eta \omega_{i_{\eta(k)},j^s_{\eta(k)}}$ and 
all coefficients at $\overline{v}(T^+_t)$ for $t>s$ vanish.
\end{proof}

\begin{teo}\label{linkdist}
Assume that all entries in $I$ are distinct, and all entries in $J$ are distinct. Moreover, assume that $\lambda$ and $\lambda_{I|J}$ are dominant and polynomial weights.
If the simple $G$-supermodule $L_G(\lambda_{I|J})$ is a composition factor of $\nabla(\lambda)$, then $\lambda \sim_{podd} \lambda_{I|J}$. 
\end{teo}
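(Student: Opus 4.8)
The plan is to combine Proposition \ref{comp} with the upper-triangularity result just established in Lemma \ref{surj3}, in direct analogy with the proof of Proposition \ref{rob-link} (the robust, distinct-entries case). First I would invoke Proposition \ref{comp}: since $\la$ and $\mu=\la_{I|J}$ are dominant polynomial weights, $L_G(\mu)$ being a composition factor of $\nabla(\la)$ is equivalent to the map $\psi_k:S_{I|J}\to\overline{S}_{I|J}$ failing to be surjective. Note that here $\overline{S}_{I|J}$, restricted to the polynomial part, is exactly the space $B$ spanned by the basis $\mathfrak{B}=\{\overline{v}(T^+_1),\ldots,\overline{v}(T^+_l)\}$ from Theorem 7.1 of \cite{fm-p2}, and $S_{I|J}$ contains the direct sum of the subspaces $B_\eta$ over $\eta\in\Sigma_k$ (via the vectors $\sigma\pi_{\eta I_0|\eta J_{T^+}}$).

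Next I would set up the collections exactly as in the proof of Proposition \ref{rob-link}: for each $T^+\in M({\la^+}'/\mu',\nu/\omega)$ (equivalently each admissible word $J_{T^+}$ of content $cont(J)$) form the collection $\mathfrak{C}_{T^+}=\{\sigma\pi_{\eta I_0|\eta J_{T^+}}\mid\eta\in\Sigma_k\}$ and the associated set $O_{T^+}=\{\omega_{i_{\eta(k)},j_{\eta(k)}}\mid\eta\in\Sigma_k\}=\{\omega_{i_t,j_t}\mid t=1,\ldots,k\}$, where $J_{T^+}=(j_1,\ldots,j_k)$. The crucial observation is that if every $O_{T^+}$ contains a nonzero element, then for each $T^+$ one can pick a permutation $\eta$ with $\omega_{i_{\eta(k)},j_{\eta(k)}}\neq0$, and by Lemma \ref{surj3} the resulting collection of $l$ vectors $\sigma\pi_{\eta I_0|\eta J_{T^+}}$ maps under $\psi_k$ to a set whose matrix, with respect to the ordered basis $\mathfrak{B}$, is upper-triangular with nonzero diagonal — hence invertible, so $\psi_k$ is surjective, contradicting the assumption.

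Therefore non-surjectivity forces the existence of some $T^+$ with $O_{T^+}=\{0\}$, i.e. $\omega_{i_t,j_t}(\la)=0$ for all $t=1,\ldots,k$ with $J_{T^+}=(j_1,\ldots,j_k)$ and $I_0=(i_1,\ldots,i_k)$. I would then read off the odd-linkage chain: setting $\kappa_0=\la$ and $\kappa_t=\la_{i_1\cdots i_t\mid j_1\cdots j_t}$, the key point is that $\omega_{i_{t},j_{t}}(\kappa_{t-1})=\omega_{i_t,j_t}(\la)$ — because the intermediate steps only modify components of the weight at indices $i_1,\ldots,i_{t-1}$ and $j_1,\ldots,j_{t-1}$, all distinct from $i_t$ and $j_t$ by the distinctness hypotheses — so each $\kappa_{t-1}\sim_{sodd}\kappa_t$, giving $\la\sim_{odd}\la_{I|J}$. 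Since $\la$ and $\la_{I|J}$ are polynomial and each $\kappa_t$ differs from the previous by subtracting a $\delta^+$ and adding a $\delta^-$ while a vanishing $\omega$ keeps things in the polynomial range (using dominance of $\la_{I|J}$ and Corollary 7.2 of \cite{fm-p2} to stay inside $\nabla$), in fact $\la\sim_{podd}\la_{I|J}$.

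The main obstacle I anticipate is the bookkeeping needed to guarantee that the intermediate weights $\kappa_t$ are genuinely polynomial and that the argument correctly uses $\mathfrak{B}$ as a basis for the polynomial even-primitive vectors (as opposed to all even-primitive vectors in $F_k$); this is where Corollary 7.2 of \cite{fm-p2} and Theorem 7.1 of \cite{fm-p2} must be cited carefully, and one must check that the vectors $\sigma\pi_{\eta I_0|\eta J_{T^+}}$ indeed lie in $S_{I|J}$ and project onto $\mathfrak{B}$ as claimed in the setup preceding Lemma \ref{surj3}. The triangularity and the odd-linkage extraction themselves are then essentially a transcription of Proposition \ref{rob-link} with Lemma \ref{surj} replaced by Lemma \ref{surj3} and Lemma \ref{disjoint}.
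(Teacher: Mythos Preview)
Your proposal is correct and follows essentially the same approach as the paper's proof: both invoke Proposition~\ref{comp} (together with Corollary~7.2 of \cite{fm-p2}) to reduce to non-surjectivity of $\psi_k$, use the upper-triangularity from Lemma~\ref{surj3} (with Lemma~\ref{disjoint} and Theorem~7.1 of \cite{fm-p2}) to conclude that some set $O_{T^+}$ consists entirely of zeroes, and then extract the odd-linkage chain exactly as in Proposition~\ref{rob-link}, with the polynomiality of intermediate weights following because distinct entries in $I$ and $J$ ensure each $\kappa_t$ lies between $\la$ and $\la_{I|J}$ componentwise.
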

\begin{proof}
We use Corollary 7.2 of \cite{fm-p2} and Proposition \ref{comp}, and assume that the map $\psi_k:S_{I|J} \to \overline{S}_{I|J}$ is not surjective. 

It is a crucial observation that if every $O_{T^+_s}$ contains a nonzero element, then the map $\psi_k: S_{I|J} \to \overline{S}_{I|J}$ is surjective. This is because 
using Lemma \ref{disjoint} and Theorem 7.1 of \cite{fm-p2}, we can find a set of primitive vectors $\sigma\pi_{\eta_s I_0|\eta_s J_{T^+_s}}$ from  $S_{I|J}$ such that the matrix of $\psi_k$ restricted on the span of these vectors is invertible.

Therefore, if $\psi_k: S_{I|J} \to \overline{S}_{I|J}$ is not surjective, then there is $s$ such that all elements of $O_{T^+_s}$ are equal to zero. 
In this case, we derive that $\lambda$ and $\lambda_{I|J}$ are odd-linked via a sequence 
$\lambda\sim_{sodd}\lambda_{i_1|j^s_1} \sim_{sodd} \lambda_{i_1i_2|j^s_1j^s_2} \sim_{sodd} \cdots \sim_{sodd} \lambda_{I|J_s}=\lambda_{I|J}$ because 
$\omega_{i_1,j^s_1}(\lambda)=0$, $\omega_{i_2,j^s_2}(\lambda_{i_1,j^s_1})=\omega_{i_2,j^s_2}(\lambda)=0$ and so on until
\linebreak $\omega_{i_k,j^s_k}(\lambda_{i_1i_2\cdots i_{k-1},j^s_1j^s_2\cdots j^s_{k-1}})=0$.
Thus, $\lambda\sim_{odd}\lambda_{I|J}$. Since $\lambda_{I|J}$ is a polynomial weight, all intermediate $\lambda_{i_1i_2\cdots i_t|j^s_1j^s_2\cdots j_t}$ are also polynomial weights, hence $\lambda\sim_{podd}\mu$.
\end{proof}

\subsection{General case of $\lambda_{I|J}$}

Before we make our final choices of the reading of the tableau $T^+$ and ordering $<$, let us comment on the previous particular cases where other options often seemed more natural.

If all entries in $I$ are distinct, then $T^+$ is a row strip; if all entries in $J$ are distinct, then $T^-$ is a column strip.

If the weight $\lambda$ is $(I|J)$-robust, then $T^+$ is a column strip, and $T^-$ is a row strip.

Assume $\lambda$ is $(I|J)$-robust. Then all entries in $I$ are distinct if and only if $T^+$ is an antichain with respect to the order $<_{\nwarrow}$ defined in \cite{fm-p2} 
(or \cite{van1}), and all entries in $J$ are distinct if and only if $T^-$ is an antichain with respect to the order $<_{\nwarrow}$.
The case of an antichain is the simplest, and the reading of tableaux does not matter.

Even the case when all entries in $I$ are distinct, and all entries in $J$ are distinct - which we were considering above - is unique from this point of view, and it is not surprising that we could use various readings of the tableaux.

In the most general case, which we will consider now, the tableaux could assume complex skew shapes, and we have a unique choice for our setup to work. 
Namely, from now on, we assume that the reading of a tableau $T^+$ is by rows moving from right to left and moving from top to bottom.

Previously, in the particular cases described in Lemmas \ref{surj}, \ref{surj2}, and \ref{surj3}, we have worked with the reverse Semitic order. We will show that the reverse Semitic  order in Lemma \ref{surj2} can be replaced by the Clausen row order that works in general.  

For every index $i$ corresponding to a row of $T^+$ and a number $1 \leq k \leq n$, 
the number of occurrences of symbols $\{m+1, . . . ,m+k\}$
in the rows of $T^+$ of index $i$ or lower is denoted by $r_{ik}$.
The Clausen row matrix of the tableau $T^+$ is given as $(r_{ik})$.

The Clausen row preorder on the set of set of tableaux $T^+\in M({\lambda^+}'/\mu',\nu/\omega)$ is given as follows.
Let $T^+$ and $T'^+$ are tableaux from $M({\lambda^+}'/\mu',\nu/\omega)$, $R(T^+)=(r_{ik})$, and $R(T'^+)=(r'_{ik})$. 
Then $T^+<T'^+$ if and only if $R(T^+)=R(T'^+)$ or there are indices $i$ and $k$ such that 
$r_{jl}=r'_{jl}$ for all $j<i$ and $1\leq l\leq n$, $r_{il}=r'_{il}$ for all $l<k$, and $r_{ik}<r'_{ik}$.

From now on, we fix the row Clausen preorder $<$ on tableaux $T^+\in M({\lambda^+}'/\mu',\nu/\omega)$.

\begin{lm}\label{surj4}
If $\eta\in \Sigma_k$, then the matrix of the map $\psi_k:B_{\eta}\to B$ for the bases $\mathfrak{B}_\eta$ ordered by $<^\eta$, and $\mathfrak{B}$ ordered by $<$, is an upper-triangular matrix with integral coefficients. Its diagonal entry
corresponding to $\sigma\pi_{\eta I_0|\eta L}$ and $\sigma\overline{\pi}_{I_0|L}$ is $\alpha_{\eta I_0|\eta L}$.
\end{lm}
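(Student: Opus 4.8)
The plan is to reduce Lemma \ref{surj4} to Lemma \ref{surj2} by showing that the change from the reverse Semitic order to the Clausen row order, together with the change of reading convention, preserves upper-triangularity and does not affect the diagonal entries. First I would recall that by Proposition \ref{tool} the image $\psi_k(\pi_{\eta I_0|\eta L})$ is a linear combination of $\overline{\pi}_{\eta I_0|\eta M}$, where $M$ ranges over $L$ itself and the transpositions $\gamma L$ with $\gamma=(\eta(t)\,\eta(k))$, $1\le t\le k-1$; the coefficient at $\overline{\pi}_{\eta I_0|\eta L}$ collects the diagonal contribution $\omega_{i_{\eta(k)},l_{\eta(k)}}$ together with the $\pm 1$ corrections coming from those $\gamma$ with $i_{\eta(t)}=i_{\eta(k)}$, $l_{\eta(t)}<l_{\eta(k)}$ (contributing $-a_{L,\eta}$) and those with $l_{\eta(t)}=l_{\eta(k)}$, $i_{\eta(t)}>i_{\eta(k)}$ (contributing $+b_{L,\eta}$). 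This is exactly the computation already carried out in the proof of Lemma \ref{surj2}, so the diagonal entry is $\alpha_{i_{\eta(k)},l_{\eta(k)}}=(-1)^{\eta}(\omega_{i_{\eta(k)},l_{\eta(k)}}-a_{L,\eta}+b_{L,\eta})$ regardless of which admissible total order we place on $\mathfrak{B}$.

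Next I would address integrality and the passage to the basis $\mathfrak{B}=\{\overline{v}(T^+)\}$: since $\sigma$ is defined over $\mathbb{Z}$ (Theorem 4.3 of \cite{fm-p2}) and the coefficients $\omega_{ij}$, $a_{L,\eta}$, $b_{L,\eta}$ appearing in Proposition \ref{tool} are integers, all entries of $M_\eta$ are integers, which gives the integrality claim. For triangularity, the essential point is the remaining off-diagonal terms $\overline{\pi}_{\eta I_0|\eta K}$ with $K=\gamma L$, where $\gamma$ exchanges two positions with $i_{\eta(t)}\ne i_{\eta(k)}$: as computed in Lemma \ref{surj2}, in the two surviving cases ($i_{\eta(k)}>i_{\eta(t)}$, $l_{\eta(k)}<l_{\eta(t)}$; or $i_{\eta(k)}<i_{\eta(t)}$, $l_{\eta(k)}>l_{\eta(t)}$) the multi-index $K$, once rewritten as an admissible index, is strictly smaller than $L$ in the relevant order and indexes a $T^+\in M({\la^+}'/\mu',\nu/\omega)$ (or gives $\overline{\pi}_{I_0|K}=0$). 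What must be verified now is that ``strictly smaller'' holds for the Clausen row order and the new right-to-left, top-to-bottom reading, not merely for the reverse Semitic order used before. I would establish this by a monotonicity comparison: with this reading, swapping a later larger $l$-entry into an earlier row (and vice versa) strictly lowers the row tabloid $\overline{T^+}$ in the Clausen row preorder — this is precisely the kind of statement recorded in the Clausen-order material of 6.1 of \cite{fm-p2}, namely that an elementary ``down-sorting'' move on the content vectors of the rows decreases the Clausen order. Combining this with Lemma \ref{disjoint}-type disjointness (each $\overline{\pi}_{I_0|M}$ occurs in at most one $\overline{v}(T^+)$) yields that the coefficient of $\overline{v}(T^+_t)$ in $\psi_k(\sigma\pi_{\eta I_0|\eta L})$ vanishes for $t$ above the index of $L$, which is upper-triangularity.

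The main obstacle I anticipate is exactly this last compatibility check: showing that the elementary transposition moves $\gamma$ produced by Proposition \ref{tool} are monotone decreasing for the Clausen row order under the chosen reading. In the earlier lemmas the reverse Semitic order was tailored to make this transparent; for the Clausen row order one has to argue at the level of row tabloids (or the associated dominance-type comparison on row contents) rather than on the linearly ordered words directly, and one must check that the rewriting of a non-admissible $\eta K$ back into an admissible representative $\overline{\pi}_{I_0|K}$ — which may introduce signs or annihilate the term — cannot raise it in the order. I would handle this by invoking the explicit description of how $\tau$ and $\sigma$ interact with the Clausen preorder from subsections 4.2, 5.2 and 6.1 of \cite{fm-p2}, reducing the verification to the single-swap case and then to a direct comparison of two row contents differing by moving one box; the $(I|J)$-robust hypothesis guarantees, as in Lemma \ref{surj2}, that the columns of $T^-$ are strictly decreasing so that the admissibility rewriting is unambiguous. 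Once these order-theoretic facts are in place, the proof is a formal assembly of Proposition \ref{tool}, Lemma \ref{disjoint}, and the computation from Lemma \ref{surj2}.
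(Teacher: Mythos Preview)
Your overall architecture is right --- expand via Proposition \ref{tool}, identify the diagonal contribution as $\alpha_{i_{\eta(k)},l_{\eta(k)}}$ exactly as in Lemma \ref{surj2}, and then argue the surviving off-diagonal terms drop in the order --- but two of the tools you invoke are simply not available in the setting of Lemma \ref{surj4}. First, there is no $(I|J)$-robust hypothesis here: Lemma \ref{surj4} is precisely the general polynomial case of \S5.2, so you cannot appeal to robustness to control the rewriting into admissible form. Second, Lemma \ref{disjoint} is proved only when all entries of $I$ and of $J$ are distinct; in general, different $\overline{v}(T^+)$ can share summands $\overline{\pi}_{I_0|M}$, so your passage from triangularity in the $\overline{\pi}$-coordinates to triangularity in the $\overline{v}$-basis via disjointness breaks down.

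The paper closes both gaps differently. For the order comparison it does not cite a general monotonicity principle from \cite{fm-p2}; it argues directly from semistandardness of the marked tableau $T^+_s$: if $i_{\eta(k)}>i_{\eta(t)}$ and $j^s_{\eta(k)}<j^s_{\eta(t)}$, then the positions $[r_t,\eta(t)]$ and $[r_k,\eta(k)]$ in $\mathcal{D}^+$ satisfy $\eta(t)<\eta(k)$, and semistandardness forces $r_t>r_k$, whence $K=\gamma J^s<J^s$ in the Clausen row order (the other surviving case is symmetric). For the change of basis it replaces disjointness by a leading-term argument: under the Clausen row order each $\overline{v}(T^+_s)$ has $\overline{\pi}_{I_0|J^s}$ as its unique highest summand, with all other summands strictly lower; combined with the fact that every summand $\pi_{K|M}$ of $\sigma\pi_{\eta I_0|\eta J^s}$ satisfies $\overline{\pi}_{K|M}\le\overline{\pi}_{I_0|J^s}$, this yields upper-triangularity in the $\overline{v}$-basis without any disjointness. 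Your sketch would be repaired by substituting these two arguments for the robust and disjointness appeals.
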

\begin{proof}
By Proposition \ref{tool}, we have 
\[\begin{aligned}\psi_k(\pi_{\eta I_0|\eta J^s})= &\omega_{i_{\eta(k)}j^s_{\eta(k)}} \overline{\pi}_{\eta I_0|\eta J^s} \\
&+ (1-\delta_{i_{\eta(k)}>i_{\eta(1)}} -\delta_{j^s_{\eta(k)}<j^s_{\eta(1)}}) \overline{\pi}_{\eta I_0|j^s_{\eta(k)}j^s_{\eta(2)}\cdots j^s_{\eta(k-1)}j^s_{\eta(1)}}\\
&+ (1-\delta_{i_{\eta(k)}>i_{\eta(2)}} -\delta_{j^s_{\eta(k)}<j^s_{\eta(2)}}) \overline{\pi}_{\eta I_0|j^s_{\eta(1)}j^s_{\eta(k)}j^s_{\eta(3)}\cdots j^s_{\eta(k-1)}j^s_{\eta(2)}} + \ldots \\
&+ (1-\delta_{i_{\eta(k)}>i_{\eta(k-1)}} -\delta_{j^s_{\eta(k)}<j^s_{\eta(k-1)}}) \overline{\pi}_{\eta I_0|j^s_{\eta(1)}\cdots j^s_{\eta(k)}j^s_{\eta(k-1)}}.\end{aligned}\]
Therefore, the diagonal entries of our matrix are the same as described in the statement of the lemma, and it only remains to show that our matrix
is upper-triangular.

Consider the general term 
\[ (1-\delta_{i_{\eta(k)}>i_{\eta(t)}} -\delta_{j^s_{\eta(k)}<j^s_{\eta(t)}}) 
\overline{\pi}_{\eta I_0|j^s_{\eta(1)}\cdots j^s_{\eta(t-1)} j^s_{\eta(k)} j^s_{\eta(t+1)}\cdots j^s_{\eta(k-1)}j^s_{\eta(t)}}\]
in the above formula for $\psi_k(\pi_{\eta I_0|\eta J^s})$.
Let $\gamma=(\eta(t)\eta(k))$ be the transposition switching entries in positions $\eta(t)$ and $\eta(k)$ and $K=\gamma.J^s$. 

Then $\eta K=(j^s_{\eta(1)}\cdots j^s_{\eta(t-1)} j^s_{\eta(k)} j^s_{\eta(t+1)}\cdots j^s_{\eta(k-1)}j^s_{\eta(t)})$ and the above general term becomes
\[ (1-\delta_{i_{\eta(k)}>i_{\eta(t)}} -\delta_{j^s_{\eta(k)}<j^s_{\eta(t)}}) \overline{\pi}_{\eta I_0|\eta K}.\]

If $i_{\eta(k)}>i_{\eta(t)}$ and $j^s_{\eta(k)}>j^s_{\eta(t)}$,  or $i_{\eta(k)}<i_{\eta(t)}$ and $j^s_{\eta(k)}<j^s_{\eta(t)}$, then this term vanishes.

If $i_{\eta(k)}>i_{\eta(t)}$ and $j^s_{\eta(k)}<j^s_{\eta(t)}$, then $\eta(t)<\eta(k)$ and $j^s_{\eta(t)}>j^s_{\eta(k)}$.
This means that in the marked tableau $T^+_s$ we have an entry $m+j^s_{\eta(t)}$ appearing at the position $[r_t,\eta(t)]$ and 
an entry $m+j^s_{\eta(k)}$ appearing at the position $[r_k,\eta(k)]$. If $r_t\leq r_k$, then $\mathcal{D}^+$ contains the position
$[r_k,\eta(k)]$ because $\eta(t)<\eta(k)$. Since $T^+_s$ is semistandard, this implies $m+j^s_{\eta(t)}\leq m+j^s_{\eta(k)}$, which is a contradiction.
Therefore $r_t>r_k$, which means that $K=\gamma J^s < J^s$ with respect to Clausen row preorder.

If $i_{\eta(k)}<i_{\eta(t)}$ and $j^s_{\eta(k)}>j^s_{\eta(t)}$, then $\eta(t)>\eta(k)$ and $j^s_{\eta(t)}<j^s_{\eta(k)}$ imply again that $K=\gamma.J^s< J^s$. 
Therefore either $\overline{\pi}_{I_0|K}=0$ or otherwise $\overline{\pi}_{I_0|K}<\overline{\pi}_{I_0|J^s}$.

Thus nonzero coefficients in the expression for $\psi_k(\pi_{\eta I_0|\eta J^s})$
as a linear combination of elements $\overline{\pi}_{I_0|M}$ only occur when $M\leq J^s$ and the coefficient at $\overline{\pi}_{I_0|J_s}$ is $\alpha_{\eta I_0|\eta J^s}$.

Next, if $\pi_{K|M}$ is a summand of an element $\sigma \pi_{\eta I_0|\eta J^s}$ from $\mathfrak{C}_{T^+_s}$, then 
$\overline{\pi}_{K|M}\leq \overline{\pi}_{I_0,J^s}= \mathfrak{l}(T^+_s)$. 
We infer that if the coefficient at $\overline{\pi}_{I_0|N}$ in the linear combination expressing $\psi_k(\pi_{K|M})$ is nonzero, 
then $N\leq J^s$.  Therefore, if the coefficient at $\overline{\pi}_{I_0|N}$ in the linear combination expressing $\psi_k(\sigma \pi_{\eta I_0|\eta J^s})$ is nonzero, then $N\leq J^s$.

Moreover, the expression $\pi_{\eta I_0|\eta J^s}$ is the only summand $\pi_{U|V}$ in $\sigma\pi_{\eta I_0|\eta J^s}$ such that $\psi_k(\pi_{U|V})$ has a nonzero coefficient, 
namely $\alpha_{\eta I_0|\eta J^s}$, at $\overline{\pi}_{I_0|J^s}$.

Since the leading terms $\overline{\pi}_{I_0|J^s}$ of the vectors $\overline{v}(T^+_s)$ are linearly ordered with respect to $<$, and all other terms in 
$\overline{v}(T^+_s)$ are lower than $\overline{\pi}_{I_0|J^s}$,
if we express $\psi_k(\sigma\pi_{\eta I_0|\eta J^s})$ as a linear combination of elements $\overline{v}(T^+)$ for 
$T^+\in M({\lambda^+}'/\mu',\nu/\omega)$, then the coefficient at $\overline{v}(T^+_s)$ is $\alpha_{\eta I_0|\eta J^s}$ and 
all coefficients at $\overline{v}(T^+_t)$ for $t>s$ vanish.
\end{proof}

Before we proceed further, we need to adjust the sequence from the proof of  Proposition \ref{rob-link2} to make it adhere to our reading of the tableau $T^+_s$.
Corresponding to this reading, we define $\kappa^s_t=\lambda_{i_1\cdots i_t|j^s_1\cdots j^s_t}$ for $t=1, \ldots, k$ and $\kappa_0=\lambda$.

\begin{teo}\label{link}
Let $\lambda$ and $\lambda_{I|J}$ be dominant polynomial weights. 
If the simple $G$-supermodule $L_G(\lambda_{I|J})$ is a composition factor of $\nabla(\lambda)$, then $\lambda \sim_{podd} \lambda_{I|J}$. 
\end{teo}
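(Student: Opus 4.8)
The plan is to mimic the proof of Theorem \ref{linkdist}, but now using the machinery of marked tableaux, the operator $\sigma$, and Lemma \ref{surj4} in place of the simpler Lemma \ref{surj3}. First I would invoke Corollary 7.2 of \cite{fm-p2} together with Proposition \ref{comp} to reduce the statement to showing: if the map $\psi_k:S_{I|J}\to\overline{S}_{I|J}$ is \emph{not} surjective, then $\la\sim_{podd}\la_{I|J}$. Recall that $\mathfrak{B}=\{\overline{v}(T^+_1),\dots,\overline{v}(T^+_l)\}$, listed in Clausen row order $<$, is a basis of the space $B$ of even-primitive vectors of weight $\la_{I|J}$ in $\nabla_k(\la)$, with leading terms $\overline{\pi}_{I_0|J^s}$ that are strictly $<$-ordered. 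For each tableau $T^+_s$ we form the collection $\mathfrak{C}_{T^+_s}=\{\sigma\pi_{\eta I_0|\eta J^s}\mid\eta\in\Sigma_k\}$ and assign to it the set $O_{T^+_s}=\{\omega_{i_{\eta(k)},j^s_{\eta(k)}}(\kappa^s_{k-1})\mid \eta\in\Sigma_k\}$, or equivalently $\{\alpha_{i_{\eta(k)},j^s_{\eta(k)}}\mid\eta\in\Sigma_k\}$ up to sign, where $\kappa^s_t=\la_{i_1\cdots i_t|j^s_1\cdots j^s_t}$ as defined just above the theorem.

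Next I would establish the key dichotomy. By Lemma \ref{surj4}, for each fixed $\eta$ the matrix of $\psi_k\colon B_\eta\to B$ with respect to $\mathfrak{B}_\eta$ (ordered by $<^\eta$) and $\mathfrak{B}$ (ordered by $<$) is upper-triangular with diagonal entry $\alpha_{i_{\eta(k)},j^s_{\eta(k)}}=(-1)^\eta(\omega_{i_{\eta(k)},j^s_{\eta(k)}}-a_{J^s,\eta}+b_{J^s,\eta})$ at the spot indexed by $\sigma\pi_{\eta I_0|\eta J^s}$ and $\overline{v}(T^+_s)$. The crucial observation is that if every $O_{T^+_s}$ contains a nonzero element, then for each $s$ one can choose a permutation $\eta_s$ with $\alpha_{i_{\eta_s(k)},j^s_{\eta_s(k)}}\neq 0$, and the resulting family $\{\sigma\pi_{\eta_s I_0|\eta_s J^s}\mid s=1,\dots,l\}$ spans a subspace of $S_{I|J}$ on which the matrix of $\psi_k$ (with respect to $\mathfrak{B}$) is upper-triangular with nonzero diagonal, hence invertible; this forces $\psi_k$ to be surjective. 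Contrapositively, non-surjectivity yields an index $s$ with $O_{T^+_s}=\{0\}$, i.e. $\omega_{i_t,j^s_t}(\kappa^s_{t-1})=0$ for all $t=1,\dots,k$ (here one uses, exactly as in Proposition \ref{rob-link2}, that when $\eta(k)=t$ the quantities $a_{J^s,\eta}$ and $b_{J^s,\eta}$ record precisely the shifts $\kappa^{s,+}_{t-1,\,i_t}=\la^+_{i_t}-a_{J^s,\eta}$ and $\kappa^{s,-}_{t-1,\,j^s_t}=\la^-_{j^s_t}+b_{J^s,\eta}$ in the chain up to step $t-1$). Then $\la\sim_{sodd}\kappa^s_1\sim_{sodd}\kappa^s_2\sim_{sodd}\cdots\sim_{sodd}\kappa^s_k=\la_{I|J}$ gives $\la\sim_{odd}\la_{I|J}$, and since $\la_{I|J}$ is a polynomial weight every intermediate $\kappa^s_t$ is polynomial too, so in fact $\la\sim_{podd}\la_{I|J}$.

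I expect the main obstacle to be the bookkeeping needed to verify that with the \emph{global} reading of $T^+$ fixed (by rows, right to left, top to bottom) and the Clausen row order $<$ on the tableaux, the combinatorial data $a_{J^s,\eta}$, $b_{J^s,\eta}$ still align with the weight shifts in the chain $\kappa^s_0,\kappa^s_1,\dots,\kappa^s_k$, and that the semistandardness / marked-tableau condition is genuinely what makes the off-diagonal terms in Lemma \ref{surj4} strictly lower in the Clausen row order. Both of these points are handled by Lemma \ref{surj4} and the setup preceding it, so the remaining work in the proof is just to assemble the collections $\mathfrak{C}_{T^+_s}$, run the "invertible block" argument, and extract the odd-linkage chain; this is a direct adaptation of the proofs of Theorem \ref{linkdist} and Proposition \ref{rob-link2}.
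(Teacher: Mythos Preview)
Your proposal is correct and follows essentially the same approach as the paper: reduce via Proposition \ref{comp} and Corollary 7.2 of \cite{fm-p2}, use Lemma \ref{surj4} to get upper-triangular matrices and the collections $\mathfrak{C}_{T^+_s}$, argue that non-surjectivity forces some $O_s$ to vanish, and extract the chain $\kappa^s_0\sim_{sodd}\cdots\sim_{sodd}\kappa^s_k$. The only point to flag is that the identification of $a_{J^s,\eta}$ and $b_{J^s,\eta}$ with the weight shifts is not literally ``as in Proposition \ref{rob-link2}'' --- with the new reading (rows right-to-left, top-to-bottom) the paper re-derives this in the proof of Theorem \ref{link} itself, using semistandardness of $T^+_s$ to pin down exactly which occurrences of $i_t$ and $m+j^s_t$ have already been read.
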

\begin{proof}
We use Corollary 7.2 of \cite{fm-p2} and Proposition \ref{comp} and assume that the map $\psi_k:S_{I|J} \to \overline{S}_{I|J}$ is not surjective. 

Consider the collections $\mathfrak{C}_s=\mathfrak{C}_{T_s^+}$, consisting of vectors $\sigma\pi_{\eta I_0|\eta J^s}$ for $\eta \in \Sigma_k$, where $J^s$ corresponds to $T^+_s$, listed with respect to the order $T^+_1< \ldots <T^+_l$.

To each set $\mathfrak{C}_s$, we assign a set $O_s=\{\alpha_{\eta I_0|\eta J^s}|\eta\in \Sigma_k\}$.
It is a crucial observation that if every $O_s$, for $s=1, \ldots, l$, contains a nonzero element, then the map $\psi_k: S_{I|J} \to \overline{S}_{I|J}$ is surjective. 
This is because by Lemma \ref{surj4} and Theorem 7.1 of \cite{fm-p2} we can find a set of vectors $\sigma \pi_{\eta_s I_0|\eta_s J^s}$ of weight $\lambda_{I|J}$ such that the matrix of $\psi_k$ restricted on the span of these vectors is invertible.

Therefore, if $\psi_k: S_{I|J} \to \overline{S}_{I|J}$ is not surjective, then there is an index $s$ such that all elements of $O_s$ vanish. 

We show that 
\[\lambda=\kappa^s_0\sim_{sodd}\kappa^s_1\sim_{sodd}\cdots \sim_{sodd} \kappa^s_k=\lambda_{I|J}.\] 

Assume that $\eta$ is such that $\eta(k)=t$.
Because of the order of the reading of the tableau $T^+_s$ and because the tableau $T^+_s$ is semi-standard, we infer that $a_{J^s,\eta}$ equals the number 
of entries in $\mathcal{D}^+$ that appear in the $i_t$-th column that lie in rows with indexes less than $k^s_t$. 
Therefore, $(\kappa^s_{t-1})^+_{i_t}=\lambda^+_{i_t} -a_{J^s,\eta}$.

We claim that $b_{J^s,\eta}$ equals the number of appearances of the symbol $m+j^s_t$ in the initial part of the reading of $T^+_s$ consisting of the first $t-1$ elements.
To see this, observe that since $T^+_s$ is semistandard if the symbol $m+j^s_t$ appear in columns with index bigger than $i_t$, then it must lie in the rows with indexes not exceeding $k^s_t$. This means that these appearances of the symbol $m+j^s_t$ all lie in the initial part of the reading of $T^+_s$ consisting of the first $t-1$ elements. On the other hand, since $T^+_s$ is semistandard, there cannot be any appearances of $m+j^s_t$ that lie in rows with an index smaller than $k^s_t$ and in columns with indices smaller or equal to $i_t$.
This means that every appearance of $m+j^s_t$ in the initial part of the reading of $T^+_s$ consisting of the first $t-1$ elements also counts towards $b_{J^s,\eta}$.
Therefore, $(\kappa^s_{t-1})^-_{j^s_t}=\lambda^-_{j^s_t}+b_{J^s,\eta}$. This implies $\omega_{i_t,j^s_t}(\kappa^s_{t-1})=\alpha_{\eta I_0|\eta J^s}=0$ and $\kappa^s_{t-1}\sim_{sodd}\kappa^s_t$ for each $t=1, \ldots, k$, which proves $\lambda\sim_{odd} \lambda_{I|J}$. Since all $\kappa^s_t$ are polynomial weights, the claim follows.
\end{proof}

\section{Remarks for ground fields of odd characteristic}\label{6}

In this section, we assume that the characteristic of the ground field $F$ is $p>2$.

\begin{pr}
Let $M$ be a submodule of $H^0_G(\lambda)$ generated by all elements from $F_i$ for $i<k$. Let $\psi_k:T_k\to F_k$ and $\tilde{\psi}_k:F_{k-1}\otimes Y \to F_k$ 
be the maps as before. Then $M\cap F_k=\psi_k(T_k)=\tilde{\psi_k}(F_{k-1}\otimes Y)$.

For a dominant weight $\mu$ corresponding to an element of $F_k$, the supermodule $L_G(\mu)$ is a composition factor of $H^0_G(\lambda)$ if and only if the module $L_{ev}(\mu)$ is a composition factor of $F_k/M\cap F_k$.
\end{pr}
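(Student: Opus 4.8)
For the first part the plan is to repeat verbatim the argument in the proof of Proposition \ref{comp}. The factorisation $\psi_k=\tilde\psi_k\circ proj$ gives $\psi_k(T_k)=\tilde\psi_k(F_{k-1}\otimes Y)$ and $\psi_k(T_k)\subseteq M\cap F_k$ at once, while for the reverse inclusion one orders a Poincar\'e--Birkhoff--Witt (Kostant) basis of $\Dist(G)$ so that the odd elements $_{ij}D$ with $1\le i\le m<j\le m+n$ come last; then every element of $(\Dist(G)F_i)\cap F_k$ with $i<k$ is a sum of terms $(w'')_{ij}D$ with $w''\in F_{k-1}$, hence lies in $\tilde\psi_k(F_{k-1}\otimes Y)$. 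The only point beyond Proposition \ref{comp} is that such a Kostant basis, with divided powers in the even root directions, is available over a field of characteristic $p>2$, which is standard.

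For the second part I would first set up a short d\'evissage along the floor grading. For a simple $G$-supermodule $L_G(\nu)$ whose weight $\nu$ lies on floor $F_{j_0}$ one has $L_G(\nu)=\Dist(U^-)\,\Dist(G_{ev})\,v_\nu$, where $v_\nu$ is a highest weight vector and $U^-$ is the odd unipotent subgroup whose root elements raise the floor index by one; since these strictly raise the floor, the floor-$j_0$ part of $L_G(\nu)$ equals $\Dist(G_{ev})v_\nu$, which is $G_{ev}$-simple — a proper $G_{ev}$-submodule of it would generate, after applying $\Dist(U^-)$, a proper nonzero $G$-submodule of $L_G(\nu)$ — hence equals $L_{ev}(\nu)$, and the floors of $L_G(\nu)$ below $j_0$ vanish. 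Next, by the first part $M$ is a graded $G_{ev}$-submodule of $H^0_G(\la)=\bigoplus_jF_j$ with $M\cap F_j=F_j$ for $j<k$ and $M\cap F_k=\psi_k(T_k)$, so $(H^0_G(\la)/M)_j=0$ for $j<k$ and $(H^0_G(\la)/M)_k\cong F_k/\psi_k(T_k)$. Passing to floor-$k$ weight spaces is exact, so a $G$-composition series of $H^0_G(\la)/M$ induces a $G_{ev}$-composition series of $F_k/\psi_k(T_k)$ whose factors are precisely the $L_{ev}(\nu)$ attached to the composition factors $L_G(\nu)$ of $H^0_G(\la)/M$ with $\nu$ on floor $k$. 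Hence
\[[\,F_k/\psi_k(T_k):L_{ev}(\mu)\,]_{G_{ev}}=[\,H^0_G(\la)/M:L_G(\mu)\,]_G\]
for every dominant $\mu$ on floor $F_k$; in particular, if $L_{ev}(\mu)$ is a composition factor of $F_k/(M\cap F_k)$ then $L_G(\mu)$ is one of $H^0_G(\la)/M$, hence of $H^0_G(\la)$.

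The converse is the hard direction. Given $[\,H^0_G(\la):L_G(\mu)\,]_G\ge 1$ with $\mu$ on floor $F_k$, I want $[\,F_k/(M\cap F_k):L_{ev}(\mu)\,]_{G_{ev}}\ge 1$; by the displayed identity and $[\,H^0_G(\la):L_G(\mu)\,]_G=[\,M:L_G(\mu)\,]_G+[\,H^0_G(\la)/M:L_G(\mu)\,]_G$ this amounts to ruling out the possibility that $L_G(\mu)$ occurs in $M$ but not in $H^0_G(\la)/M$. In characteristic zero this is subsumed in Proposition \ref{comp}, where the semisimplicity of the category of $G_{ev}$-modules lets one identify ``$L_G(\mu)$ a composition factor of $H^0_G(\la)$'' with ``$\psi_\mu$ not surjective'' and the latter with ``$L_{ev}(\mu)$ a composition factor of the cokernel of $\psi_k$''. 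The main obstacle here is exactly the loss of this semisimplicity, and the plan is to replace it by a surjectivity bound for $\psi_k$: using the image formulas $\psi_k(\pi_{I|J})$ of Proposition \ref{tool} and the description of the $G_{ev}$-primitive vectors (both valid over any field of characteristic $\ne 2$), one shows that when $\mu$ on floor $F_k$ is odd-linked to $\la$ the map $\psi_k:T_k\to F_k$ cannot account for the full composition multiplicity of $L_{ev}(\mu)$ in $F_k$, so that $[\,F_k/\psi_k(T_k):L_{ev}(\mu)\,]_{G_{ev}}\ge 1$; equivalently, a composition factor $L_G(\mu)$ of $M$ on floor $F_k$ is always accompanied by an occurrence of $L_{ev}(\mu)$ in $F_k/(M\cap F_k)$. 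Combined with the first part and the floor-grading d\'evissage of the previous paragraph this finishes the proof, and this surjectivity bound for $\psi_k$ over a field of characteristic $p>2$ — obtained either by a reduction/lifting argument from characteristic zero or directly from the formulas of Sections 3--4 — is the one genuinely new ingredient required.
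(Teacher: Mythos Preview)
Your treatment of the first assertion and of the implication ``$L_{ev}(\mu)$ a composition factor of $F_k/(M\cap F_k)\Rightarrow L_G(\mu)$ a composition factor of $H^0_G(\la)$'' is correct and in fact considerably more detailed than the paper's own proof, which is a two-sentence assertion: the first sentence refers back to Proposition~\ref{comp} for $M\cap F_k=\psi_k(T_k)$, and the second simply states the one-to-one correspondence between $G_{ev}$-composition factors of $F_k/(M\cap F_k)$ and $G$-composition factors $L(\mu)$ of $H^0_G(\la)$ ``generated by elements from $F_k$'', with no further argument. Your d\'evissage (floor-$k$ is exact; the bottom floor of $L_G(\nu)$ is $L_{ev}(\nu)$ and is $G_{ev}$-simple) is exactly what is needed to justify that sentence, and it yields the cleaner identity $[F_k/(M\cap F_k):L_{ev}(\mu)]_{G_{ev}}=[H^0_G(\la)/M:L_G(\mu)]_G$.

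Your plan for the remaining implication, however, has a genuine gap. You propose to use odd linkage together with the image formulas of Proposition~\ref{tool} and the material of Sections~3--5 to produce a ``surjectivity bound'' for $\psi_k$. This cannot work as stated, for two reasons. First, the implication proved in those sections runs the wrong way: they show that \emph{non-surjectivity} of (restrictions of) $\psi_k$ forces odd linkage, not that odd linkage forces non-surjectivity; you would need a converse that is nowhere established. Second, in characteristic $p$ the vectors $\pi_{I|J}$ and $\sigma\pi_{I|J}$ only span the reduction modulo $p$ of the characteristic-zero primitive vectors (the spaces $Z_\eta$, $Z$ of Section~\ref{6}), and these are proper subspaces of $S_{\mu,F}$, $\overline S_{\mu,F}$ in general; controlling $\psi_k$ on that subspace says nothing about the $G_{ev}$-composition multiplicity $[F_k/\psi_k(T_k):L_{ev}(\mu)]$, which is what you need. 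In short, your route imports the linkage principle to prove a structural statement that is logically prior to linkage, and the specific tools you invoke do not reach the $G_{ev}$-composition multiplicities in positive characteristic.

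The paper does not supply an argument for this direction either; it merely asserts the correspondence. If you want to complete it, the natural line is purely structural and avoids linkage: one should show that no composition factor of $M=M^{(k-1)}$ has highest weight on floor $k$, by analysing the filtration $0=M^{(-1)}\subset M^{(0)}\subset\cdots$ and arguing that each subquotient $M^{(j)}/M^{(j-1)}$, being generated as a $G$-module by its lowest floor, has all its composition factors with highest weight on that floor. This last step is the nontrivial one and is what your d\'evissage for the quotient does not yet give; it is also the step the paper leaves implicit.
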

\begin{proof}
The equality $M\cap F_k=\psi_k(T_k)$ follows by similar arguments as in the proof of Proposition \ref{comp}. Moreover,
simple $G_{ev}$-composition factors $L_{ev}(\mu)$ of $F_k/M\cap F_k$ are in one-to-one correspondence to simple composition factors $L(\mu)$ of $H^0_G(\lambda)$ 
generated by elements from $F_k$.
\end{proof}

Since $Im(\psi_k)=Im(\tilde{\psi}_k)$, we can study the map $\tilde{\psi}_k$ instead of the map $\psi_k$.  
Note that whether $L(\mu)$ is a composition factor of $H^0_G(\lambda)$ depends only on the $F_{k-1}$ and on the map $\tilde{\psi}_k$, but not on other preceding floors $F_i$ for $i<k$.
If the $G_{ev}$-structure of $F_{k-1}\otimes Y$ and $F_k$ is known, we can investigate $\tilde{\psi}_k$ as a $G_{ev}$-morphism. We will not, however, pursue this direction in this paper.

We would like to discuss the modular reduction from the ground field of rational numbers $\mathbb{Q}$ to a ground field $F$ of characteristic $p>2$. 
From now on, assume that $\lambda$ is a dominant polynomial weight and $\mu=\lambda_{I|J}$ is a dominant polynomial weight belonging to the $k$th floor $F_k$ of $H^0_G(\lambda)$.
Denote by $S_{\mu,F}$ and $\overline{S}_{\mu,F}$ the sets of even-primitive vectors of weight $\mu$ in $T_k$ and $F_k$, defined over the field $F$.

Recall the definition of the sets $\mathfrak{B}$ and $\mathfrak{B}_{\eta}$ for $\eta\in\Sigma_k$ and their $F$-spans 
$B_{\eta, F}$ and $B_F$
from the beginning of Section \ref{sec5}.

Denote the $\mathbb{Z}$-span of elements from $\mathfrak{B}_{\eta}$ by $Z_{\eta,\mathbb{Z}}$ and 
the $\mathbb{Z}$-span of elements from $\mathfrak{B}$ by $Z_{\mathbb{Z}}$. 
Then $Z_{\eta,\mathbb{Z}}\otimes_{\mathbb{Z}} \mathbb{Q} \simeq B_{\eta,\mathbb{Q}}$, 
and $Z_{\mathbb{Z}}\otimes_{\mathbb{Z}} \mathbb{Q} \simeq B_{\mathbb{Q}}$.

Denote by $Z_{\eta}$ the image of $Z_{\eta,\mathbb{Z}}$ under the reduction modulo $p$, and by $Z$ the image of $Z_{\mathbb{Z}}$ under the reduction modulo $p$. 
Over a ground field $F$ of positive characteristic $p>2$, the space $B_{\eta,F}$ contains $Z_{\eta,\mathbb{Z}}\otimes_{\mathbb{Z}} \mathbb{F}$ but is bigger in general, 
and the space $B_{F}$ contains $Z_{\mathbb{Z}}\otimes_{\mathbb{Z}} \mathbb{F}$ but is bigger in general.

Let us modify the definition of the simple-odd-linkage of weights in the case when the ground field $F$ has characteristic $p>2$ by replacing the requirement 
$\omega_{ij}=0$ by $\omega_{ij}\equiv 0 \pmod p$.

It follows from Lemma \ref{surj4} that $\psi_{k, \mathbb{Q}}(Z_{\eta, \mathbb{Z}})\subseteq Z_{\mathbb{Z}}$. Moreover, 
$\psi_{k, \mathbb{Q}}: B_{\eta,\mathbb{Q}} \to B_{\mathbb{Q}}$ is induced by $\psi_{k, \mathbb{Z}}: Z_{\eta, \mathbb{Z}}\to Z_{\mathbb{Z}}$.
When we reduce the map $\psi_{k,\mathbb{Z}}:Z_{\eta,\mathbb{Z}} \to Z_{\mathbb{Z}}$ modulo $p$, we obtain a map $\psi^Z_{\eta}: Z_{\eta} \to Z$, 
which is a restriction and corestriction of the map $\psi_{k,F}:B_{\eta,F} \to B_{F}$.
Combine different maps $\psi^Z_{\eta}$ to a map $\psi^Z:\oplus_{\eta\in \Sigma_k} Z_{\eta} \to Z$ which is a restriction and corestriction of 
$\psi_k:S_{\mu,F} \to \overline{S}_{\mu,F}$.

The next statement gives a connection to the linkage principle for general linear supergroups over the field of characteristic $p>2$ - see \cite{mz}.

\begin{pr}
Assume $\lambda$ and $\mu=\lambda_{I|J}$ are dominant and polynomial weights, and the characteristic of the ground field $F$ is $p>2$.
If $\psi^Z$ is not surjective, then $\lambda\sim_{odd} \mu$.
\end{pr}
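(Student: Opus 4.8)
The plan is to rerun the proof of Theorem \ref{link} over $\mathbb{Z}$ and then reduce modulo $p$, exploiting that the matrices produced by Lemma \ref{surj4} have integral entries. First I would recall the collections $\mathfrak{C}_{T^+_s}$, consisting of the vectors $\sigma\pi_{\eta I_0|\eta J^s}$ for $\eta\in\Sigma_k$, with the tableaux listed according to the Clausen row order $T^+_1<\cdots<T^+_l$, and attach to each $\mathfrak{C}_{T^+_s}$ the set $O_s=\{\alpha_{\eta I_0|\eta J^s}\mid\eta\in\Sigma_k\}$. Since $Z_{\mathbb{Z}}$ is free over $\mathbb{Z}$ with basis $\mathfrak{B}$ and each $Z_{\eta,\mathbb{Z}}$ is free with basis $\mathfrak{B}_\eta$, the reductions of these bases are $F$-bases of $Z$ and of $Z_\eta$, and with respect to them the matrix of $\psi^Z_\eta\colon Z_\eta\to Z$ is exactly the reduction modulo $p$ of the matrix of Lemma \ref{surj4}; in particular it is upper-triangular for the orders $<^\eta$ on the source and $<$ on the target, with entry $\alpha_{i_{\eta(k)},j^s_{\eta(k)}}\bmod p$ in the slot of $\sigma\pi_{\eta I_0|\eta J^s}$ and $\overline{v}(T^+_s)$, and vanishing coefficient at $\overline{v}(T^+_t)$ for $t>s$.

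Next I would establish the modular form of the crucial observation of Theorem \ref{link}: if every $O_s$ contains an element not divisible by $p$, then $\psi^Z$ is surjective. Indeed, for each $s$ choose $\eta_s$ with $\alpha_{i_{\eta_s(k)},j^s_{\eta_s(k)}}\not\equiv 0\pmod p$; by the upper-triangular structure above, the $l\times l$ matrix of $\psi^Z$ restricted to the span of $\{\sigma\pi_{\eta_s I_0|\eta_s J^s}\mid s=1,\dots,l\}$, expressed in the basis $\{\overline{v}(T^+_s)\bmod p\}$ of $Z$, is upper-triangular with units on the diagonal, hence invertible over $F$, so $\psi^Z$ is onto. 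Taking the contrapositive, since $\psi^Z$ is assumed not to be surjective there is an index $s$ for which every element of $O_s$ is congruent to $0$ modulo $p$. Because $a_{J^s,\eta}$ and $b_{J^s,\eta}$ depend on $\eta$ only through the value $\eta(k)$, letting $\eta(k)$ run over $1,\dots,k$ we obtain $\omega_{i_t,j^s_t}-a_{J^s,\eta}+b_{J^s,\eta}\equiv 0\pmod p$ for each $t=1,\dots,k$.

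Finally, with $\kappa^s_0=\la$ and $\kappa^s_t=\la_{i_1\cdots i_t|j^s_1\cdots j^s_t}$ as in Theorem \ref{link}, I would quote the combinatorial computation carried out there verbatim — it uses only that $T^+_s$ is semistandard together with the fixed reading of $T^+_s$ by rows from right to left and from top to bottom — to get $(\kappa^s_{t-1})^+_{i_t}=\la^+_{i_t}-a_{J^s,\eta}$ and $(\kappa^s_{t-1})^-_{j^s_t}=\la^-_{j^s_t}+b_{J^s,\eta}$, whence $\omega_{i_t,j^s_t}(\kappa^s_{t-1})=\omega_{i_t,j^s_t}-a_{J^s,\eta}+b_{J^s,\eta}\equiv 0\pmod p$. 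By the modified definition of simple-odd-linkage, which now requires $\omega_{ij}\equiv 0\pmod p$, this says $\kappa^s_{t-1}\sim_{sodd}\kappa^s_t$ for every $t$, so that $\la=\kappa^s_0\sim_{sodd}\kappa^s_1\sim_{sodd}\cdots\sim_{sodd}\kappa^s_k=\la_{I|J}=\mu$, i.e. $\la\sim_{odd}\mu$. The one point requiring genuine care — and what I expect to be the main obstacle — is the lattice bookkeeping at the outset: checking that $Z_{\mathbb{Z}}$ and $Z_{\eta,\mathbb{Z}}$ are free with the asserted bases and that $\psi_{k,\mathbb{Q}}$ is defined over $\mathbb{Z}$ on them, so that reduction modulo $p$ genuinely produces the reductions of the matrices of Lemma \ref{surj4}; everything else is a transcription of the characteristic-zero argument with equalities replaced by congruences modulo $p$.
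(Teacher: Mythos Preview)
Your proposal is correct and follows essentially the same approach as the paper's own proof: set up the collections $\mathfrak{C}_{T^+_s}$ and sets $O_s$, use the upper-triangular matrices of Lemma \ref{surj4} (reduced modulo $p$) to conclude that non-surjectivity of $\psi^Z$ forces some $O_s$ to vanish modulo $p$, and then replay the combinatorial identification $\omega_{i_t,j^s_t}(\kappa^s_{t-1})=\alpha_{i_t,j^s_t}$ from Theorem \ref{link} to obtain the chain of simple-odd-linkages. The lattice bookkeeping you flag as the delicate point is in fact handled by the paragraph preceding the proposition, where the paper records that $\psi_{k,\mathbb{Q}}(Z_{\eta,\mathbb{Z}})\subseteq Z_{\mathbb{Z}}$ as a consequence of the integrality in Lemma \ref{surj4}.
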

\begin{proof}
We proceed as in the proof of Theorem \ref{link} and  
consider the collections $\mathfrak{C}_s=\mathfrak{C}_{T_s^+}$, consisting of vectors $\sigma\pi_{\eta I_0|\eta J^s}$ for $\eta \in \Sigma_k$, where $J^s$ corresponds to $T^+_s$, listed with respect to the order $T^+_1< \ldots <T^+_l$.

To each set $\mathfrak{C}_s$, we assign a set $O_s=\{\alpha_{\eta I_0|\eta J^s}|\eta\in \Sigma_k\}$.
It is a crucial observation that if every $O_s$, for $s=1, \ldots, l$, contains a nonzero element, then the map $\psi^Z$ is surjective. 
This is because by Lemma \ref{surj4} and Theorem 7.1 of \cite{fm-p2} we can find a set of vectors $\sigma \pi_{\eta_s I_0|\eta_s J^s}$ of weight $\lambda_{I|J}$ such that the matrix of $\psi_k$ restricted on the span of these vectors is invertible.

Therefore, if $\psi_Z$ is not surjective, then there is an index $s$ such that all elements of $O_s$ vanish. 
The remainder of the proof is analogous to the second half of the proof of Theorem \ref{link}.
\end{proof}
\section*{Acknowledgment.}
The author thanks an anonymous referee for careful reading of the manuscript and for suggesting improvements that increased its readability.

\end{document}